\renewcommand\tableofcontents{%
    \section*{\huge{Table of Contents}
        \@mkboth{%
           \MakeUppercase\contentsname}{\MakeUppercase\contentsname}}
    \@starttoc{toc}%
    } 
\newsavebox\MBox
\newtheorem{thm}{Theorem}
\newtheorem{lem}[thm]{Lemma}
\newtheorem{prop}[thm]{Proposition}
\newtheorem{cor}[thm]{Corollary}
\newtheorem{defn}[thm]{Definition}
\newtheorem{rmk}{Remark}
\numberwithin{rmk}{section}
\numberwithin{thm}{section}
\numberwithin{equation}{section}
\numberwithin{figure}{section}
\theoremstyle{plain} 
\newcommand{\thistheoremname}{}
\newtheorem{genericthm}[thm]{\thistheoremname}
  \newtheorem*{genericthm*}{\thistheoremname}
\newenvironment{namedthm*}[1]
  {\renewcommand{\thistheoremname}{#1}%
   \begin{genericthm*}}
  {\end{genericthm*}}
\newcommand{\cA}{\mathcal{A}}
\newcommand{\cB}{\mathcal{B}}
\newcommand{\cD}{\mathcal{D}}
\newcommand{\cH}{\mathcal{H}}
\newcommand{\cS}{\mathcal{S}}
\newcommand{\bN}{\mathbb{N}}
\newcommand{\R}{\mathbb{R}}
\def\frh{{\mathfrak h}}
\def\frH{{\mathfrak H}}
\newcommand{\W}{\textbf{W}}
\newcommand{\Tr}{\textup{Tr}}
\newcommand{\barphi}{\overline{\phi}}
\def\ux{{\underline{x}}}
\def\cS{{\mathcal S}}
\def\Tr{{\rm Tr}}
\def\tr{{\rm Tr}}
\title[An infinite sequence of conserved quantities for the cubic GP hierarchy]{An infinite sequence of conserved quantities for the cubic Gross-Pitaevskii hierarchy on $\mathbb{R}$}
\author[D. Mendelson]{Dana Mendelson$^1$}
\address{$^1$  
Department of Mathematics \\ 
University of Chicago\\  
5734 S. University Avenue \\ 
Chicago, IL  60637
}
\email{dana@math.uchicago.edu}
\thanks{$^1$ D.M. is funded in part by NSF DMS-1128155, and gratefully acknowledges support from the Institute for Advanced Study at Princeton.}
\author[A. Nahmod]{Andrea R. Nahmod$^2$}
\address{$^2$  
Department of Mathematics \\ University of Massachusetts\\  710 N. Pleasant Street, Amherst MA 01003}
\email{nahmod@math.umass.edu}
\thanks{$^2$ A.N. is funded in part by NSF DMS-1201443 and DMS-1463714.}
\author[N. Pavlovi\'c]{Nata\v{s}a Pavlovi\'c$^3$}
\address{$^3$  
Department of Mathematics\\ 
University of Texas at Austin\\ 
2515 Speedway, Stop C1200\\
Austin, TX 78712}
\email{natasa@math.utexas.edu}
\thanks{$^3$ N.P. is funded in part by NSF DMS-1516228.}
\author[G. Staffilani]{Gigliola Staffilani$^4$}
\address{$^4$ Department of Mathematics\\
Massachusetts Institute of Technology\\ 
77 Massachusetts Avenue,  Cambridge, MA 02139}
\email{gigliola@math.mit.edu}
\thanks{$^4$ G.S. is funded in part by NSF
 DMS-1362509 and DMS-1462401.}
\begin{document}
\maketitle
\begin{abstract}
We consider the cubic Gross-Pitaevskii (GP) hierarchy on $\mathbb{R}$, which is an infinite hierarchy of coupled linear inhomogeneous PDE appearing in the derivation of the cubic nonlinear Schr\"odinger equation from quantum many-particle systems. In this work, we identify an infinite sequence of operators which generate infinitely many conserved quantities for solutions of the GP hierarchy.
\end{abstract}

\section{Introduction}

In recent years, major advances have been made towards understanding the macroscopic properties of quantum mechanical systems with a very large number of particles\footnote[5]{Between $N \sim 10^3$ for very dilute Bose-Einstein samples, up to values of the order $N \sim 10^{30}$ in stars.}.  Mathematically, one considers an appropriate scaling limit, for example the limit of an $N$-particle system as $N\rightarrow \infty$, and derives a governing equation for the limiting system, which is then expected to be a good approximation for the macroscopic properties observed in experiments where the number of particles $N$  is very large, but finite, see for instance \cite{schlein_clay} for a nice survey, as well as references therein. 

The subject of our work is the one-dimensional cubic Gross-Pitaevskii (GP) hierarchy, which we define in  \eqref{equ:gp_intro}-\eqref{eq-def-b_intro} below. The dynamics of the GP hierarchy are closely connected to those of the one-dimensional cubic nonlinear Schr\"odinger (NLS) equation \eqref{equ:NLS_intro}, see for instance \cite{CHPS} and \S\ref{sec:gp_hierarchy} for more details. Nonetheless, although it is well-known that the cubic NLS on $\mathbb{R}$ is an example of an integrable equation, questions about an integrable structure for the complex dynamics of the GP hierarchy itself have yet to be explored.

A principle aim for this work is to begin to investigate whether, and if so how, the integrable structure of the one-dimensional cubic NLS manifests itself in the one-dimensional cubic GP hierarchy. In our main Theorem \ref{main}, we exhibit infinitely many conservation laws for the cubic GP hierarchy in 1D, which we believe is an important step towards understanding a possible integrable structure at the level of the GP hierarchy. Should such a structure be available, this could provide new information about the integrable structure of the cubic NLS. Indeed, we suspect that an integrable structure for the GP hierarchy would be a natural ingredient in eventually identifying an integrable structure at the level of the $N$-particle system from which the integrable structure for the cubic NLS could be derived.

\medskip
We now turn to the precise definition of the cubic GP hierarchy. The cubic GP hierarchy in one spatial dimension governs the evolution of 
a  sequence of functions $ \gamma^{(k)}(t , \ux_k, \ux_k')$, with $t \geq 0$, 
$\ux_k=(x_1,\dots,x_k) \in {\R}^k$, $\ux_k'=(x'_1,\dots,x'_k) \in {\R}^k$, 
arising from quantum many-particle systems as
the number of particles goes to infinity. The GP hierarchy is given by:

\begin{align}\label{equ:gp_intro}
	i\partial_t \gamma^{(k)} &= - \sum_{j=1}^k (\Delta_{x_j} - \Delta_{x'_j}) \gamma^{(k)}   
	\, + \,   2 \kappa B_{k+1} \gamma^{(k+1)}   \,, \;\;k\in\bN\,,
\end{align}
with $\kappa \in \{\pm 1 \}$ and 
\begin{align} 
\label{eq-def-b_intro}
	B_{k+1}\gamma^{(k+1)}
     = \sum_{j=1}^k B^+_{j;k+1 }\gamma^{(k+1)} - \sum_{j=1}^k B^-_{j;k+1 }\gamma^{(k+1)},
\end{align}
where
\begin{align}
\label{equ:collision_intro}
& \Bigl(B^+_{j;k+1}\gamma^{(k+1)}\Bigr)(t,x_1,\dots,x_k;x_1',\dots,x_k') \\
    & \quad \quad = \int dx_{k+1}  dx_{k+1}' \delta(x_{k+1}-x_{k+1}' )	\delta(x_j-x_{k+1})
        \gamma^{(k+1)}(t,x_1,\dots,x_{k+1};x_1',\dots,x_{k+1}') \nonumber
\end{align}
and similarly for $B^-_{j;k+1}$ with $\delta(x_j-x_{k+1})$ replaced by $\delta(x_j'-x_{k+1})$. The case $ \kappa = +1$ corresponds to the focusing hierarchy (which is related to the focusing NLS), while $\kappa = -1$ corresponds to the defocusing hierarchy. In the sequel, we will be working solely with the $B^{+}$ operators and not the $B^{-}$ operators, so we will abuse notation slightly and denote 
\[
B_{j;k}:= B_{j;k}^{+}. 
\]
This hierarchy consists of an infinite sequence of coupled linear equations which arise in the so called Gross-Pitaevskii limit
from systems of bosons interacting with two-body potentials. The GP hierarchy admits a special class of factorized solutions, given by
\begin{align}
\bigl(\gamma^{(k)}(t,\ux_k, \ux_k') \bigr)_{k \in \bN} = \bigl( | \phi \rangle \langle \phi|^{\otimes k}\bigr)_{k \in \bN} = \Bigl( \prod_{i=1}^k \phi(x_i) \overline{\phi}(x_i')\Bigr)_{k \in \bN}
\end{align}
for functions $\phi$ which solve the cubic NLS on $\R$:
\begin{align} \label{equ:NLS_intro}
i \phi_t + \partial_{xx} \phi = 2 \kappa  |\phi|^2 \phi
\end{align}
for $\kappa \in \{\pm 1\}$. The one-dimensional cubic NLS \eqref{equ:NLS_intro} has been extensively studied both for its role as a dispersive equation and as an example of an integrable model. This model can be solved exactly by the method of inverse scattering by Zakharov-Shabat \cite{ZS72, ZS}, and also \cite{AKNS}. There are many consequences of integrability for an equation, but perhaps one of the most well-known is the existence of a sequence of infinitely many integrals of motion.  The construction of these integrals of motion for the cubic NLS leads to the natural question of whether it is possible to generate infinitely many conserved quantities for smooth solutions of the GP hierarchy, even before knowing whether the GP hierarchy is integrable. In this paper we answer this question in the affirmative by proving the existence of infinitely many operators which generate conserved quantities for smooth solutions of the GP hierarchy. The existence of these conserved quantities for the GP hierarchy provides a first step towards understanding a possible integrable structure for the GP hierarchy, and perhaps, eventually, understanding a physical derivation of the integrable structure for the cubic NLS from many body quantum systems.

We also view this work from the perspective of the program of passing results from nonlinear PDE, such as the nonlinear Schr\"{o}dinger, to the infinite particle system given by the GP hierarchy. The quantum de Finetti theorem has proven to be a crucial tool for this type of analysis (see \S\ref{sec:quantum_de_finetti} for precise statements of this theorem). This approach has, for example, proven effective in establishing a new proof of uniqueness \cite{CHPS} and the first scattering result \cite{CHPS-sc} for the GP hierarchy.
  
To prove the existence of infinitely many operators which generate conserved quantities for smooth solutions of the GP hierarchy operators we proceed as follows. First we define operators $\{\W_{n+1}^j \}_{n \in \bN}$ (for certain $j \in \bN$) on factorized solutions of the GP hierarchy via a recursive definition. Our goal in this definition is to obtain conservation laws which are compatible with those for the cubic NLS. Subsequently we extend the definition to general solutions to the GP hierarchy using a recent uniqueness result of Hong-Taliaferro-Xie  \cite{HTX} for the GP hierarchy based on quantum de Finetti theorems  (see Theorem \ref{thm-strongDeFinetti} and Theorem \ref{thm-weakDeFinetti}), and some functional analytic tools on Bochner integrals. We record explicitly the first few operators $\W_n^j$ to illustrate their form:
\begin{align*}
\qquad \W_1^j &= Id_j \\
\W_2^j &= -i \partial_{x_j}\Tr_{j+1} \\
\W_3^j &= - \partial^2_{x_j}\Tr_{j+1j+2} + \kappa B_{j, j+1}\Tr_{j+2} \\
\W_4^j &= i \partial^3_{x_j}\Tr_{j+1, j+2,j+3} - i \kappa\partial_{x_j} B_{j, j+1}\Tr_{j+2} - i \kappa ( B_{j, j+1} \partial_{x_{j+1}}\Tr_{j+2} + B_{j, j+2} \partial_{x_{j}} \Tr_{j+1})\Tr_{j+3}\\
\W_5^j &= \partial^4_{x_j}\Tr_{j+2, j+3, j+4, j+5} -i \partial_{x_j} \Bigl[ i \kappa\partial_{x_j} B_{j, j+1}\Tr_{j+2} - i \kappa ( B_{j, j+1} \partial_{x_{j+1}}\Tr_{j+2} + B_{j, j+2} \partial_{x_{j}} \Tr_{j+1})\Tr_{j+3} \Bigr]\\
&+ \kappa \Bigl[ B_{j;j+1} (- \partial^2_{x_{j+1}}\Tr_{j+2, j+3} + \kappa B_{j+1, j+2}\Tr_{j+3}) \\
& \hspace{24mm} - B_{j;j+2} \partial_{x_j} \Tr_{j+1}\partial_{x_{j+2}}\Tr_{j+3} + B_{j;j+3}( - \partial^2_{x_j}\Tr_{j+1, j+2} + \kappa B_{j, j+1}\Tr_{j+2})   \Bigr] \Tr_{j+4}.
\end{align*}

\medskip
In particular, due to the complexity of the expressions involved, we point out that simply knowing the expression for the conservation laws for the NLS \eqref{equ:NLS_intro} is insufficient to determine the form of the operators $\W_n^j$.  In fact, the recursive formulation of these operators which we derive plays a crucial role in our proof. 

\medskip
We now record a paraphrased statement of our main result, Theorem \ref{main}, without providing details for the moment:

\begin{namedthm*}{Summary of Main Result}
Let  $(\gamma^{(k)}(t))_{k\in\bN}$ be a sufficiently regular solution of the (de)focusing cubic GP hierarchy \eqref{equ:gp_intro}, satisfying a certain admissibility condition (see Definition \ref{defn:admissible}). Then for each $n \in \bN$, $1 \leq j \leq n$ such that 
$k \geq j+n-1$ there exists an operator $\W_n^j$ such that the quantity
\[ 
\Tr \W_{n}^j  \gamma^{(k)}(t) 
\]
is conserved in time, namely
\[ 
\Tr \W_{n}^j  \gamma^{(k)}(t)  = \Tr \W_{n}^j  \gamma^{(k)}(0) .
\]
\end{namedthm*}

\begin{rmk} \label{rem:recover}
In addition to new integrals of motion, our main result also recovers the energy defined by Chen-Pavlovi\'{c}-Tzirakis \cite{CPT}, which corresponds to $W_3^j$ in the above notation. Moreover, the operators $\W_n^j$ also enable us to recover the higher order energies defined by Chen-Pavlovi\'{c} in \cite{CP14gwp}, see Remark \ref{rmk:gen_cons} and Proposition \ref{prop:higher_cons} for more details.
\end{rmk}

This result is, to the best of our knowledge, the first time an infinite sequence of independent conserved quantities for the cubic GP hierarchy has been produced, and the first time these operators $\W_n^j$ have been defined for the GP hierarchy. As is the case with the integrable structure for the cubic NLS, our main result applies to both focusing and defocusing hierarchies. 

Armed with the right formulation of the operators $\W_{n}^j $, we will prove that these conserved quantities can be given by averages of conserved quantities for the cubic NLS with respect to the so-called de Finetti measure of the initial data defined in \eqref{gkdf}. This should be compared to the scattering result of \cite{CHPS-sc} which states that the scattering states for the GP hierarchy are given by an appropriate average over the scattering states for the solutions to the NLS with respect to the de Finetti measure. Consequently, we believe that our definition yields conservation laws which are quite natural from a physical point of view, see Remark \ref{rmk:natural} for more details. We will also prove, in Corollary \ref{cor:op_form}, that these operators take the form of a leading $n$-th order differential operator plus lower order terms.

In particular, our result provides strong evidence of a link between an infinite particle system derived from many-body quantum statistical mechanics and the algebraic properties of the one-dimensional cubic NLS. There still remains the interesting open question of determining an integrable structure at the level of the GP hierarchy.

\subsection*{Organization of paper} In \S\ref{sec:gp_hierarchy} we briefly recall  the history of the derivation of the GP hierarchy from a many-body quantum system and some relevant facts about the GP hierarchy and the quantum de Finetti theorem. In \S\ref{sec:wn_def} we define the $\W_n^j$ operators described above. With the definition of the $\W_n$ operators in hand, we give the precise statement of our results in  \S\ref{sec:main}. In \S\ref{sec-w-fact} we prove the main results for factorized solutions and in \S\ref{sec:cons_laws_gp} we prove the results for general solutions of the GP hierarchy, and record some further consequences and applications of our formula for the conserved quantities.

\subsection*{Acknowledgements:} 
The authors thank the MSRI and the IHES for the kind hospitality that allowed us to develop this project. 
They would like to express their thanks to Patrick G\'erard for helpful discussions.

\section{On the Gross-Pitaevskii hierarchy} \label{sec:gp_hierarchy}


In this section we give a brief history of the derivation of the GP hierarchy, followed by a review of recent works on the GP hierarchy using the quantum de Finetti theorem.

\subsection{History of the derivation of the GP hierarchy from quantum many-particle systems} 

The derivation of the nonlinear Hartree (NLH) and the NLS equations
have been approached by many authors in a variety of ways. The first connection between quantum 
many particle systems and NLH was given by Hepp in \cite{Hepp}, and generalized by Ginibre and Velo \cite{GV1, GV2}, using the Fock space formalism and coherent states, which were inspired by techniques of quantum filed theory.  This method was further developed in \cite{RS, GMar, GMac, GMM1, GMM2, Chenx} and in the recent work \cite{brsc}. See also \cite{Pickl,Pickl2}. 

Another way to derive NLH and NLS is via the so called BBGKY\footnote[6]{Bogoliubov-Born-Green-Kirkwood-Yvon} hierarchy, which was prominently used in the works of
Lanford for the study of classical mechanical systems in the infinite particle limit \cite{Lan-1,Lan-2}.
The first derivation of the NLH via the BBGKY hierarchy was given
by Spohn in \cite{Spohn}. This topic was further studied in e.g. \cite{ABGT,AGT, frgrsc, FKS}. 
About a decade ago, Erd\"os, Schlein and Yau
fully developed the  BBGKY hierarchy approach to the derivation of 
the NLH and NLS in their seminal works \cite{ESY1,ESY2,ESY3,ESY4}, which initiated 
much of the current widespread interest in this research topic.

 The subject of the derivation of NLH and NLS from quantum many particle systems is closely related to the phenomenon of Bose-Einstein condensation, in systems of interacting bosons, 
which was first experimentally verified in 1995. For the mathematical study of BEC, we refer to 
\cite{ailisesoyn,LSSY} and references therein.

The most difficult step in the derivation of the GP hierarchy from many body quantum systems is the proof of uniqueness for solutions of the limiting hierarchy. The proof of uniqueness was first obtained by Erd\"os-Schlein-Yau \cite{ESY1,ESY2,ESY3,ESY4} for $d=3$ 
in the space $\{\gamma^{(k)} \, | \, \|\gamma^{(k)}\|_{\frh^1_k}  \, < \, \infty\}$, given in Definition \ref{equ:banach_space_density_matrices} below. Proving uniqueness is a crucial, and very involved part of Erd\"os-Schlein-Yau's approach for deriving the cubic defocusing NLS in ${\R}^3$ and is based on a powerful combinatorial method that resolves the issues that stem from the factorial growth of the number of terms in the  iterated Duhamel expansions.


The proof of uniqueness for solutions of the GP hierarchy was revisited by Klainerman-Machedon in \cite{KM08}, based on a reformulation of the combinatorial argument of Erd\"os-Schlein-Yau, and a viewpoint inspired by methods from nonlinear partial differential equations. This motivated many recent works, including \cite{KSS,CP10,CP11, Chenx, CH16, GSS}. In particular, the proof in \cite{KM08} provided strong indication that methods from dispersive PDE could be brought to bear upon the study of the GP hierarchy, and even on the quantum many-body system. Indeed, the techniques that were developed for the well-posedness theory of the GP were in turn successfully adapted in \cite{CP11} to the quantum many-particle systems.

Recently, a new methodology has entered the stage, based on a quantum de Finetti theorem which allows, in some instances, a more direct transfer of the techniques from the nonlinear Schr\"{o}dinger equation to the GP hierarchy \cite{CHPS}. We describe some of these ideas below in our context.

\subsection{The quantum de Finetti theorem and uniqueness of solutions to the GP hierarchy}  \label{sec:quantum_de_finetti}

\medskip
In the proof of our main result, Theorem \ref{main}, we use the version of uniqueness of solutions to the  GP hierarchy, which has been recently established in \cite{HTX} by employing a quantum de Finetti theorem, which is a quantum analogue of the Hewitt-Savage theorem in probability theory, \cite{HS}. Before we state the uniqueness result, we recall the strong version of the de Finetti's theorem,  due to Hudson-Moody \cite{HM} and Stormer \cite{Stormer}, and  stated in the context  of $C^*$-algebras. We quote this theorem using  the formulation presented by Lewin-Nam-Rougerie in  \cite{LNR}, which pioneered recent applications of the theorem in the context of questions related to Bose-Einstein condensation and derivations of nonlinear dispersive PDE from quantum many body systems. 
 
We start by recalling the definition of admissibility for a sequence of bosonic density matrices:  
 
\begin{defn} \label{defn:admissible}
Let $\cH$ be a separable Hilbert space and $\cH^k = \bigotimes_{sym}^k\cH$ the corresponding bosonic $k$-particle space. We say that 
\[
\Gamma = (\gamma^{(1)},\gamma^{(2)},\dots)
\]
is a sequence of admissible bosonic density matrices on  $\cH$ if $\gamma^{(k)}$ is a non-negative trace class operator on $\cH^k$ such that 
\[ 
\gamma^{(k)}=\tr_{k+1} \gamma^{(k+1)}.
\]
\end{defn}

\begin{thm}\label{thm-strongDeFinetti}
(Strong Quantum de Finetti Theorem)
Let $\cH$ be a separable Hilbert space and  $\cH^k = \bigotimes_{sym}^k\cH$ the corresponding bosonic $k$-particle space. If $\Gamma$ is a sequence of admissible  bosonic density matrices on  $\cH$, then there exists a unique Borel probability measure $\mu$, supported on the unit sphere $S\subset\cH$, and invariant under multiplication of $\phi \in \cH$ by complex numbers of modulus one, such that 
\begin{equation}\label{gkdf}
    \gamma^{(k)} = \int d\mu(\phi)  (  | \phi  \rangle \langle \phi |  )^{\otimes k}\;, \qquad \forall k\in\bN\,.
\end{equation} 
\end{thm}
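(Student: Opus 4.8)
This is the Hudson--Moody--St\o rmer theorem; were one to prove it rather than cite it, the cleanest route is the hands-on argument of Lewin--Nam--Rougerie via ``Husimi'' (lower-symbol) measures, combined with a finite-rank truncation that reduces to a finite-dimensional one-particle space. Here ``bosonic'' means each $\gamma^{(k)}$ is supported on the symmetric subspace $\cH^k=\mathbbm{1}_{\mathrm{sym}}\cH^{\otimes k}$, ``admissible'' adds the consistency relations $\gamma^{(k)}=\tr_{k+1}\gamma^{(k+1)}$, and since $\mu$ is to be a probability measure one reads $\tr\gamma^{(k)}=1$ out of the hypotheses (it follows from $\gamma^{(1)}$ being a trace-class density matrix and consistency).

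\textbf{Uniqueness (the easy half).} Suppose $\mu$ is any Borel probability measure on the unit sphere $S\subset\cH$ with $\gamma^{(k)}=\int_S(|\phi\rangle\langle\phi|)^{\otimes k}\,d\mu(\phi)$ for all $k$. Pairing against rank-one operators gives, for all $u_1,\dots,u_k,v_1,\dots,v_k\in\cH$,
\[
\int_S\prod_{i=1}^k\langle u_i,\phi\rangle\,\overline{\langle v_i,\phi\rangle}\;d\mu(\phi)=\big\langle u_1\otimes\cdots\otimes u_k,\;\gamma^{(k)}\,(v_1\otimes\cdots\otimes v_k)\big\rangle,
\]
so these integrals are determined by $\Gamma$. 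Each map $\phi\mapsto\langle u,\phi\rangle$ is weakly continuous, hence these monomials descend to continuous functions on the quotient $B/U(1)$ of the (weakly compact, metrizable, since $\cH$ is separable) unit ball $B$ by the phase action; together with the constants they form a unital self-adjoint subalgebra separating points of $B/U(1)$, hence are dense by Stone--Weierstrass. A phase-invariant measure supported on $S$ is determined by its pushforward to $B/U(1)$, so $\mu$ is unique. Conversely, averaging any representing measure over $\phi\mapsto e^{i\theta}\phi$ leaves every $(|\phi\rangle\langle\phi|)^{\otimes k}$, hence $\gamma^{(k)}$, unchanged, so producing a phase-invariant one reduces to producing any one.

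\textbf{Existence, finite dimensions.} Suppose first $\dim\cH=d<\infty$ and let $u$ be the uniform probability measure on $S$. For each $N$ set $d\mu^{(N)}(\phi):=\dim(\cH^N)\,\langle\phi^{\otimes N},\gamma^{(N)}\phi^{\otimes N}\rangle\,du(\phi)$; that $\mu^{(N)}(S)=1$ follows from Schur's lemma (the $U(d)$-invariant operator $\int_S|\phi^{\otimes N}\rangle\langle\phi^{\otimes N}|\,du$ equals $\dim(\cH^N)^{-1}\mathbbm{1}_{\mathrm{sym}}$) together with $\gamma^{(N)}=\mathbbm{1}_{\mathrm{sym}}\gamma^{(N)}\mathbbm{1}_{\mathrm{sym}}$ and $\tr\gamma^{(N)}=1$. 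Since $S$ is compact, a subsequence $\mu^{(N_\ell)}\rightharpoonup\mu$. The same Schur identity on $\cH^{\otimes(k+N)}$ gives
\[
\int_S(|\phi\rangle\langle\phi|)^{\otimes k}\,d\mu^{(N)}(\phi)=\frac{\dim(\cH^N)}{\dim(\cH^{k+N})}\,\tr_{[N]}\!\big[(\mathbbm{1}^{\otimes k}\otimes\gamma^{(N)})\,\mathbbm{1}_{\mathrm{sym},\,k+N}\big],
\]
and the ``symmetric subspace is nearly a product'' estimate (equivalently, the quantitative finite-$N$ de Finetti bound) identifies the right side with $\tr_{k+1,\dots,N}\gamma^{(N)}+O_{d,k}(1/N)=\gamma^{(k)}+O_{d,k}(1/N)$, the last step by admissibility. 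Letting $N=N_\ell\to\infty$ and using that $\phi\mapsto(|\phi\rangle\langle\phi|)^{\otimes k}$ has weakly continuous matrix elements yields $\gamma^{(k)}=\int_S(|\phi\rangle\langle\phi|)^{\otimes k}\,d\mu$. (Alternatively one may invoke St\o rmer's original argument: symmetric states on the UHF algebra $\bigotimes^\infty M_d(\C)$ form a Choquet simplex whose extreme points are product states, so $\Gamma$ is a barycenter of product states, and the bosonic condition forces each single-site state to be pure, since $\rho\otimes\rho$ supported on the symmetric subspace forces $\mathrm{ran}\,\rho$ one-dimensional.)

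\textbf{Existence, general separable $\cH$; the main obstacle.} Diagonalize the trace-class (hence compact) operator $\gamma^{(1)}=\sum_j\lambda_j|u_j\rangle\langle u_j|$ with $\lambda_j\downarrow$, $\sum_j\lambda_j=1$, let $P_J$ project onto $\mathrm{span}\{u_1,\dots,u_J\}$, and note $\tr\big((\mathbbm{1}^{\otimes k}-P_J^{\otimes k})\gamma^{(k)}\big)\le k\sum_{j>J}\lambda_j\to0$. One runs the construction of the previous paragraph inside $\mathrm{ran}\,P_J$ with $J=J(N)\to\infty$ chosen slowly enough that the finite-dimensional error $O_{J(N),k}(1/N)\to0$, obtaining probability measures $\mu^{(N)}$ carried by $S$, then extracts a weak limit $\mu$ inside the weakly compact metrizable ball $B$ and checks it is carried by $S$ and represents $\Gamma$. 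The genuinely delicate point---the main obstacle---is exactly this limiting step: one must (a) balance the two scales so that both the truncation error $k\sum_{j>J(N)}\lambda_j$ and the de Finetti error (which degrades with $J(N)$) vanish, and (b) rule out escape of mass to $\{\|\phi\|<1\}$ in the weak topology, i.e.\ show $\int_S\|\phi\|^2\,d\mu=1$, which comes from $\tr\gamma^{(1)}=1$, weak lower semicontinuity of $\phi\mapsto\|\phi\|^2$, and the relations already obtained for each $k$. Finally, averaging $\mu$ over the phase action produces the phase-invariant representing measure, and the uniqueness step delivers uniqueness and the support on $S$.
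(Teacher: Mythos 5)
The paper does not prove this theorem: it is quoted verbatim as a known result, attributed to Hudson--Moody and St{\o}rmer and presented in the formulation of Lewin--Nam--Rougerie, and the authors then use it as a black box in the proofs of Proposition \ref{prop:bdd}, Proposition \ref{prop:ops_defined}, and Theorem \ref{main}. So there is no ``paper's own proof'' to compare against; you have supplied a proof for a statement the authors deliberately cite.

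That said, your sketch is a faithful reconstruction of the Lewin--Nam--Rougerie route, and the three pieces hold up. The uniqueness argument via Stone--Weierstrass on the weakly compact metrizable quotient $B/U(1)$ is correct: the phase-invariant monomials $\phi\mapsto\prod_i\langle u_i,\phi\rangle\overline{\langle v_i,\phi\rangle}$ do form a unital self-adjoint algebra that separates points of $B/U(1)$ (if they agree then $|\phi\rangle\langle\phi|=|\psi\rangle\langle\psi|$, hence $\phi=e^{i\theta}\psi$). The finite-dimensional construction via lower symbols and the Schur identity $\int_S|\phi^{\otimes N}\rangle\langle\phi^{\otimes N}|\,du=\dim(\cH^N)^{-1}\mathbbm{1}_{\mathrm{sym}}$, combined with a quantitative finite-$N$ de Finetti bound, is the standard argument. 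For the infinite-dimensional case you correctly identify the two genuine difficulties: balancing the truncation rank $J(N)$ against the $O_{J,k}(1/N)$ error, and ruling out mass loss to the open ball. On the latter, it is worth making the mechanism explicit, since it is the precise point where ``strong'' (admissible, measure on the sphere) diverges from ``weak'' (weak-$*$ limits, measure on the ball): once $\gamma^{(1)}=\int_B|\phi\rangle\langle\phi|\,d\mu$ is established, pairing against an orthonormal basis and using monotone convergence gives $1=\tr\gamma^{(1)}=\int_B\|\phi\|^2\,d\mu\le\mu(B)=1$, forcing $\|\phi\|=1$ $\mu$-a.e. Your parenthetical alternative via St{\o}rmer is also sound: $\mathrm{ran}(\rho)\otimes\mathrm{ran}(\rho)\subseteq\cH^2_{\mathrm{sym}}$ does force $\mathrm{ran}(\rho)$ to be one-dimensional. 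One small caveat you already flagged: Definition \ref{defn:admissible} as written only demands trace-class and consistency, so $\tr\gamma^{(1)}=1$ must be read into the hypotheses for $\mu$ to come out a probability measure, as you note.
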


The solutions to the GP hierarchy \eqref{equ:gp_intro} obtained via weak-* limits from solutions of the many body system as in \cite{ESY1}-\cite{ESY4} are not necessarily admissible. However a weak version of the quantum de Finetti theorem applies in those cases. Here we give the statement of the version of a weak quantum de Finetti theorem that was recently revived by Lewin-Nam-Rougerie \cite{LNR_hartree}. We note that an equivalent result had been established  also by Ammari-Nier in \cite{AN1,AN2}.

\begin{thm}\label{thm-weakDeFinetti} (Weak Quantum de Finetti Theorem) \cite{LNR_hartree, AN1,AN2}.
Assume that $\gamma_N^{(N)}$ is an arbitrary sequence of mixed states on $\cH^N$, $N\in\bN$, satisfying $\gamma_N^{(N)}\geq 0$ and $\tr_{\cH^N}(\gamma_N^{(N)})=1$, and assume that its $k$-particle marginals have weak-* limits 
\begin{equation}  
    \gamma^{(k)}_{N}:=\tr_{k+1,\cdots,N}(\gamma^{(N)}_N)
    \; \rightharpoonup^* \; \gamma^{(k)} \;\;\;\; (N\rightarrow\infty)\,,
\end{equation} 
in the trace class on $\cH^k$ for all $k\geq1$. Then, there exists a unique Borel probability measure $\mu$ on the unit ball in $\cH$, and invariant under multiplication of $\phi \in \cH$ by complex numbers of modulus one, 
such that
\begin{equation}\label{gkdf-w}
    \gamma^{(k)} = \int d\mu(\phi)  (  | \phi  \rangle \langle \phi |  )^{\otimes k}    \;, \qquad \forall k\in\bN\,.
\end{equation}  
\end{thm}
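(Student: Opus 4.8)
This theorem is quoted, not proved, in the present paper: it is due to Lewin--Nam--Rougerie \cite{LNR_hartree} and, in an equivalent semiclassical form, to Ammari--Nier \cite{AN1,AN2}. If I were to prove it, I would reproduce the bootstrap of \cite{LNR_hartree}, which derives the weak statement from the strong one (Theorem \ref{thm-strongDeFinetti}) by \emph{adjoining a single ``mode at infinity.''} \emph{Step 1: what passes to the limit.} For $k<N$ one has $\gamma^{(k)}_N=\tr_{k+1}\gamma^{(k+1)}_N$. Since $K\otimes\Pi$ is compact on $\cH^{k+1}$ whenever $K\ge0$ is compact on $\cH^k$ and $\Pi$ is a finite-rank projection on $\cH$, testing this identity against such operators and letting $\Pi\uparrow\I$ by monotone convergence, one deduces from $\gamma^{(k)}_N\rightharpoonup^*\gamma^{(k)}$ only the \emph{one-sided} relations
\[
\gamma^{(k)}\ge 0,\qquad \tr\gamma^{(k)}\le 1,\qquad \tr_{k+1}\gamma^{(k+1)}\le\gamma^{(k)} .
\]
Hence $\tr\gamma^{(k)}$ is non-increasing; set $m_\infty:=\lim_k\tr\gamma^{(k)}$ and $Q^{(\ell)}:=\gamma^{(\ell)}-\tr_{\ell+1}\gamma^{(\ell+1)}\ge0$ for $\ell\ge1$, $Q^{(0)}:=1-\tr\gamma^{(1)}\ge0$. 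A telescoping computation then gives
\[
\gamma^{(\ell)}=R^{(\ell)}+\sum_{m\ge\ell}\tr_{\ell+1,\dots,m}Q^{(m)},\qquad R^{(\ell)}:=\lim_{M\to\infty}\tr_{\ell+1,\dots,M}\gamma^{(M)}\ge0,
\]
the limit existing in trace norm because $\tr_{\ell+1,\dots,M}\gamma^{(M)}$ decreases and its traces converge (to $m_\infty$); thus $Q^{(m)}$ is the ``mass with exactly $m$ surviving particles,'' while $R^{(\ell)}$ is the non-escaping part and is itself an admissible sequence of trace $m_\infty$.

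\emph{Step 2: lift to $\widetilde\cH:=\cH\oplus\C w$.} With $P:\widetilde\cH\to\cH$ the orthogonal projection, I would put
\[
\widetilde\gamma^{(\ell)}:=R^{(\ell)}+\sum_{m\ge\ell}\tr_{\ell+1,\dots,m}Q^{(m)}+\sum_{0\le m<\ell}\binom{\ell}{m}\,\cS\!\left(Q^{(m)}\otimes|w\rangle\langle w|^{\otimes(\ell-m)}\right)\!\cS ,
\]
where $\cS$ symmetrizes on $\bigotimes^\ell_{sym}\widetilde\cH$ and the first two terms are viewed inside $\bigotimes^\ell_{sym}\cH\subset\bigotimes^\ell_{sym}\widetilde\cH$. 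One checks $\widetilde\gamma^{(\ell)}\ge0$; that $\tr\widetilde\gamma^{(\ell)}=m_\infty+\sum_{m\ge0}\tr Q^{(m)}=1$; that $P^{\otimes\ell}\widetilde\gamma^{(\ell)}P^{\otimes\ell}=\gamma^{(\ell)}$ (the $w$-carrying terms drop out, leaving precisely the Step-1 decomposition); and — the decisive point — that $\tr_{\ell+1}\widetilde\gamma^{(\ell+1)}=\widetilde\gamma^{(\ell)}$, since tracing out one particle either erases a $w$ or contracts a leg of some $Q^{(m)}$, and the weights $\binom{\ell}{m}$ are exactly those that make the two contributions reassemble. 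So $(\widetilde\gamma^{(\ell)})_\ell$ is an admissible, trace-one bosonic sequence on $\widetilde\cH$.

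\emph{Step 3: apply the strong theorem, then descend.} By Theorem \ref{thm-strongDeFinetti} on $\widetilde\cH$ there is a unique phase-invariant Borel probability measure $\widetilde\mu$ on the unit sphere of $\widetilde\cH$ with $\widetilde\gamma^{(\ell)}=\int(|\psi\rangle\langle\psi|)^{\otimes\ell}\,d\widetilde\mu(\psi)$. Writing $\psi=\phi\oplus\zeta w$ with $\|\phi\|^2+|\zeta|^2=1$, the map $\psi\mapsto\phi$ sends that sphere onto the closed unit ball $\overline B\subset\cH$; set $\mu:=(\psi\mapsto\phi)_*\widetilde\mu$, again phase-invariant and supported on $\overline B$. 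Since $P^{\otimes\ell}(|\psi\rangle\langle\psi|)^{\otimes\ell}P^{\otimes\ell}=(|\phi\rangle\langle\phi|)^{\otimes\ell}$,
\[
\gamma^{(\ell)}=P^{\otimes\ell}\widetilde\gamma^{(\ell)}P^{\otimes\ell}=\int(|\phi\rangle\langle\phi|)^{\otimes\ell}\,d\mu(\phi)\qquad(\forall\ell),
\]
which is \eqref{gkdf-w}. For uniqueness, if $\mu_1,\mu_2$ are phase-invariant probability measures on $\overline B$ both representing every $\gamma^{(\ell)}$, lift each via $\phi\mapsto\phi\oplus\sqrt{1-\|\phi\|^2}\,w$; phase-invariance kills the cross terms, so both lifts represent the \emph{same} admissible sequence on $\widetilde\cH$ (its blocks being iterated partial traces of the $\gamma^{(\ell)}$'s, hence independent of $i$), and uniqueness in Theorem \ref{thm-strongDeFinetti} then forces the lifts, and therefore $\mu_1=\mu_2$.

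\emph{Main difficulty.} The heart of the argument is Step 2: finding the right allocation of $w$-particles (the combinatorial weights) so the partial-trace relation closes, together with the correct extraction of the non-escaping mass $R^{(\ell)}$ — which a naive ``defect'' sum misses, as one already sees on $\gamma^{(\ell)}=(|\phi\rangle\langle\phi|)^{\otimes\ell}$ with $\|\phi\|=1$. The bookkeeping in Step 1 (weak-$*$ lower semicontinuity of the trace, and the failure of partial traces to be weak-$*$ continuous against $K\otimes\I$) is a genuine subtlety but routine. An alternative that bypasses the combinatorics altogether is the semiclassical route of Ammari--Nier: realize the $\gamma^{(k)}_N$ as marginals of states on the bosonic Fock space and identify $\mu$ with a limiting Wigner/Husimi measure, the cost being the localization and tightness estimates needed to guarantee that the limit is a genuine probability measure on $\overline B$.
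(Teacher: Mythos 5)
The paper does not prove this theorem: it is stated as a known result, with citations to Lewin--Nam--Rougerie and Ammari--Nier, and is subsequently invoked as a black box in the proof of Theorem \ref{thm-uniqueness-2} and Proposition \ref{prop:bdd}. You correctly identify this at the outset, so there is no in-paper argument against which to compare your sketch.

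As an independent check, the strategy you outline is the standard one and is sound in its architecture. The three points that actually carry the proof are all present: (i) weak-$*$ convergence only preserves the one-sided relations $\tr_{k+1}\gamma^{(k+1)}\le\gamma^{(k)}$ and $\tr\gamma^{(k)}\le 1$, since testing against $K\otimes\mathbf{1}$ is not allowed; (ii) the telescoping decomposition $\gamma^{(\ell)}=R^{(\ell)}+\sum_{m\ge\ell}\tr_{\ell+1,\dots,m}Q^{(m)}$ is exact, with the trace-norm convergence of $\tr_{\ell+1,\dots,M}\gamma^{(M)}$ to $R^{(\ell)}$ justified by monotonicity of positive operators together with convergence of traces; and (iii) lifting to $\cH\oplus\C w$ so that the defects $Q^{(m)}$ can be ``completed'' into a genuinely admissible, trace-one sequence, applying Theorem \ref{thm-strongDeFinetti} there, and pushing forward along $\psi\mapsto P\psi$ onto the closed unit ball. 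The uniqueness argument via the phase-invariant lift $\phi\mapsto\phi\oplus\sqrt{1-\|\phi\|^2}\,w$ is also correct. I have not re-derived the combinatorial weights $\binom{\ell}{m}$ in your explicit formula for $\widetilde\gamma^{(\ell)}$, and these are precisely what must be tuned so that $\tr_{\ell+1}\widetilde\gamma^{(\ell+1)}=\widetilde\gamma^{(\ell)}$ closes (as you yourself flag), but the remaining required identities $\widetilde\gamma^{(\ell)}\ge 0$, $\tr\widetilde\gamma^{(\ell)}=1$, and $P^{\otimes\ell}\widetilde\gamma^{(\ell)}P^{\otimes\ell}=\gamma^{(\ell)}$ are set up correctly. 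Since the paper itself offers no proof, this is as far as a comparison can go; your account is a faithful summary of the cited argument.
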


Motivated by the de Finetti theorems, which imply that a suitable member of an infinite hierarchy is on average factorized, Chen-Hainzl-Pavlovi\'{c}-Seiringer \cite{CHPS} obtained a new proof of the unconditional uniqueness of mild solutions to the cubic GP hierarchy in $\R^3$. Loosely speaking, this implies that such solutions to the GP hierarchy are given as an average of factorized solutions, where each factor is a solution to a cubic NLS. 
Following  \cite{CHPS}, de Finetti theorems were successfully used to address unconditional uniqueness of certain GP hierarchies, see e.g. \cite{Sohinger15,HTX,HTX2,CS14,HS16}. In particular, in the work at hand we use the unconditional uniqueness result for solutions to the cubic GP hierarchy in $\R^d, \, d\geq 1$, obtained recently in \cite{HTX}. Before we state this unconditional uniqueness result, we recall the definition of the space  $\frH^{\alpha}$ of sequences $(\gamma^{(k)})_{k\in\bN}$ and the definition  of a {\em mild solution} to \eqref{equ:gp_intro} in the space $L^\infty_{t\in[0,T]}\frH^\alpha$, as used in \cite{CHPS} and \cite{HTX}. 

\begin{defn}  \label{equ:banach_space_density_matrices}
For $\alpha > 0$, the space  $\frH^{\alpha}$ of sequences $(\gamma^{(k)})_{k\in\bN}$ is given by
 \begin{align}\label{frakH-def-1}
 	\frH^{\alpha}:=\Big\{ \,(\gamma^{(k)})_{k\in\bN} \, \Big| \, 
 	\|\gamma^{(k)}\|_{\frh^{\alpha}_k} 
	< R^{2k} \; \mbox{for some constant }R<\infty \, \Big\},
\end{align}
where 
\[
\|\gamma^{(k)}\|_{\frh^{\alpha}_k} \, := \,  \tr (|S^{(k,\alpha)}\gamma^{(k)}|)   \,,
\]
with
\begin{equation}\label{skalpha}
S^{(k,\alpha)}:=\prod_{j=1}^k(1-\Delta_{x_j})^{\alpha/2}(1-\Delta_{x_j'})^{\alpha/2}
\end{equation}
for $\alpha>0.$
\end{defn}

We state the  definition of a {\em mild solution} to the GP hierarchy \eqref{equ:gp_intro} in the space $L^\infty_{t\in[0,T]}\frH^\alpha$.
\begin{defn} 
Let 
\begin{equation} 
     U^{(k)}(t) := \prod_{\ell=1}^k e^{it(\Delta_{x_\ell}-\Delta_{x_\ell'})}
\end{equation} 
denote the free $k$-particle propagator. For $\alpha > 0$, a {\em mild solution} to \eqref{equ:gp_intro} in the space $L^\infty_{t\in[0,T]}\frH^\alpha$ 
is a sequence of marginal density matrices $\Gamma=(\gamma^{(k)}(t))_{k\in\bN}$ 
solving the integral equation
\begin{equation} 
    \gamma^{(k)}(t) = U^{(k)}(t)\gamma^{(k)}(0) + i \int_0^t  U^{(k)}(t-s) B_{k+1}\gamma^{(k+1)}(s) ds \,
    \;\;\;\;\;\; k\in\bN\,,
\end{equation} 
satisfying  
\begin{equation} 
    \sup_{t\in[0,T]}
    \|\gamma^{(k)}\|_{\frh^{\alpha}_k}  
    < R^{2k}
\end{equation} 
for a finite constant $R$ independent of $k$. 
\end{defn} 

Now we give the precise formulation of the unconditional uniqueness theorem we will use in our proofs. To the best of our knowledge, this is the only version of this theorem which applies in our setting, and yields a representation formula in dimension $d= 1$.
 
\begin{thm}\label{thm-uniqueness-2}  (Hong-Taliaferro-Xie) \cite{HTX}.
Let 
\begin{align}
	s \,  \Biggl\{
	\begin{array}{rcl}
	\geq &\frac{d}{6} & {\rm if} \; d=1,2 \\ 
	>&\frac{d-2}{2} & {\rm if} \; d\geq3 \,.
	\end{array} \Biggr.
\end{align}
If $(\gamma^{(k)}(t))_{k\in\bN}$ is a mild solution in $L^\infty_{t\in[0,T)}\frH^s$ 
to the (de)focusing cubic GP hierarchy in $\R^d$ with initial data $(\gamma^{(k)}(0))_{k\in\bN}\in\frH^s$, 
which is either admissible, or obtained
at each $t$ from a weak-* limit, 
then, $(\gamma^{(k)})_{k\in\bN}$ is the unique solution for the given initial data.

Moreover, if $(\gamma^{(k)}(0))_{k\in\bN} \in\frH^s$ satisfies
\begin{equation} 
    \gamma^{(k)}(0) = \int d\mu(\phi_0)(|\phi_0\rangle\langle\phi_0|)^{\otimes k} 
    \;\;\;\;\;\;\forall k\in\bN\,,
\end{equation} 
where $\mu$ is a Borel probability measure
supported  either  on the unit sphere or on the unit ball in $L^2(\R^d)$,
and invariant under multiplication of 
$\phi_0 \in L^2(\R^d)$ by complex numbers of modulus one, 
then  
\begin{equation}\label{eq-GPdeF-NLS-anyD2}
    \gamma^{(k)}(t) = \int d\mu(\phi_0)(|\phi(t,x)\rangle\langle \phi(t,x)|)^{\otimes k} 
    \;\;\;\;\;\;\forall k\in\bN\,,
\end{equation} 
where  for $t\in[0,T)$ $\phi(t, x)$ is the  solution 
to the cubic (de)focusing NLS with initial data $\phi_0:=\phi(0,x)$. 
\end{thm}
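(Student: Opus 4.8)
The plan is to adapt the quantum de Finetti strategy of Chen--Hainzl--Pavlovi\'c--Seiringer \cite{CHPS} to the stated regularity range, and to obtain the representation formula \eqref{eq-GPdeF-NLS-anyD2} as a consequence of unconditional uniqueness. \emph{Uniqueness.} Suppose $(\gamma_1^{(k)})_{k\in\bN}$ and $(\gamma_2^{(k)})_{k\in\bN}$ are two mild solutions in $L^\infty_{t\in[0,T)}\frH^s$ with the same data, each of which --- at every $t$ --- is admissible (resp.\ a weak-$*$ limit). Put $\delta^{(k)}:=\gamma_1^{(k)}-\gamma_2^{(k)}$, so $\delta^{(k)}(0)=0$ and $\delta^{(k)}(t)=i\int_0^t U^{(k)}(t-\tau)B_{k+1}\delta^{(k+1)}(\tau)\,d\tau$. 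Iterating this identity $r$ times writes $\delta^{(k)}(t)$ as a finite sum of time-ordered integrals
\[
i^r\!\!\int\limits_{0\le t_r\le\cdots\le t_1\le t}\!\! U^{(k)}(t-t_1)\,B_{j_1;k+1}\,U^{(k+1)}(t_1-t_2)\,B_{j_2;k+2}\cdots B_{j_r;k+r}\,\delta^{(k+r)}(t_r)\,dt_r\cdots dt_1 ,
\]
indexed by the possible ``contraction patterns'' $(j_1,\dots,j_r)$ (with signs from \eqref{eq-def-b_intro}), whose number grows only factorially in $r$. I would then split $\delta^{(k+r)}=\gamma_1^{(k+r)}-\gamma_2^{(k+r)}$ and apply, at the fixed time $t_r$, the quantum de Finetti theorem --- Theorem \ref{thm-strongDeFinetti} in the admissible case, Theorem \ref{thm-weakDeFinetti} in the weak-$*$ case --- to each piece, writing $\gamma_i^{(k+r)}(t_r)=\int (|\phi\rangle\langle\phi|)^{\otimes(k+r)}\,d\mu_{i,t_r}(\phi)$. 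Since the $U^{(m)}$ factorize over particles, each term becomes a $\mu_{i,t_r}$-average of a spatial multilinear expression in the free evolutions $e^{it\Delta}\phi$ of a single function.

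\emph{Closing the loop.} The heart of the matter is a space-time multilinear bound of Klainerman--Machedon type --- reduced, via the de Finetti representation, to a bilinear Strichartz-type estimate for free evolutions of single functions --- showing that each factor $B_{j;m+1}$, composed with the adjacent propagators, costs only an absolute constant in the $\frh^s$ topology when applied to a factorized free evolution; this is precisely the estimate available when $s\ge d/6$ (for $d=1,2$), resp.\ $s>\tfrac{d-2}{2}$ (for $d\ge3$). Granting it, and using the a priori bounds $\sup_t\|\gamma_i^{(m)}(t)\|_{\frh^s_m}<R^{2m}$ (hence $\|\delta^{(k+r)}(t_r)\|_{\frh^s_{k+r}}\le 2R^{2(k+r)}$), each iterated term on a subinterval $[0,T_0]$ is bounded in $\frh^s_k$ by $C^r\tfrac{T_0^r}{r!}R^{2(k+r)}$: the factor $T_0^r/r!$ is the volume of the time simplex and absorbs the factorial number of contraction patterns. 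Summing, $\|\delta^{(k)}(t)\|_{\frh^s_k}\lesssim_k (C T_0 R^2)^r\,r^{k}\,R^{2k}\to 0$ as $r\to\infty$, once $T_0$ is small depending only on $R$. (This simplex cancellation plays the role of the Klainerman--Machedon board-game reorganization needed in higher dimensions.) Since $T_0$ depends only on $R$, one iterates over $[0,T_0],[T_0,2T_0],\dots$ to conclude $\gamma_1^{(k)}\equiv\gamma_2^{(k)}$ on $[0,T)$.

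\emph{Representation formula.} Now let $\gamma^{(k)}(0)=\int(|\phi_0\rangle\langle\phi_0|)^{\otimes k}\,d\mu(\phi_0)$ with $\mu$ supported on the unit sphere (or ball) of $L^2(\R^d)$. The hypothesis $(\gamma^{(k)}(0))_k\in\frH^s$ bounds $\int\|\phi_0\|_{H^s}^{2k}\,d\mu(\phi_0)$ by $\|\gamma^{(k)}(0)\|_{\frh^s_k}<R^{2k}$ for every $k$, whence $\|\phi_0\|_{H^s}\le R$ for $\mu$-a.e.\ $\phi_0$. Let $\phi(t)$ solve the cubic (de)focusing NLS \eqref{equ:NLS_intro} with data $\phi_0$; since the one-dimensional cubic NLS is $L^2$-subcritical, $\phi$ exists on all of $[0,T)$ with $\sup_{[0,T)}\|\phi(t)\|_{H^s}\le R'$ uniformly over $\mathrm{supp}\,\mu$. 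Set $\widetilde\gamma^{(k)}(t):=\int(|\phi(t)\rangle\langle\phi(t)|)^{\otimes k}\,d\mu(\phi_0)$. Because each factorized sequence $\big((|\phi(t)\rangle\langle\phi(t)|)^{\otimes k}\big)_k$ is a mild solution of \eqref{equ:gp_intro} exactly when $\phi$ solves \eqref{equ:NLS_intro}, and \eqref{equ:gp_intro} is linear in the sequence, averaging against $\mu$ --- a Bochner integral in the trace class on $\cH^k$, legitimate by the uniform bounds --- produces a mild solution $(\widetilde\gamma^{(k)})_k$ in $L^\infty_{t\in[0,T)}\frH^s$ which is admissible and has data $(\gamma^{(k)}(0))_k$. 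The uniqueness just established forces $\gamma^{(k)}(t)=\widetilde\gamma^{(k)}(t)$, which is \eqref{eq-GPdeF-NLS-anyD2}.

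\emph{Main obstacle.} The genuinely hard step is the sharp multilinear space-time estimate at the endpoint-type regularity $s=d/6$ in dimensions $d=1,2$: one must bound, uniformly over contraction patterns and in a form that chains through the $r$-fold iteration, the trace-class norm of $B_{j;m+1}$-contractions of free tensor evolutions --- concretely, diagonal restrictions of products $e^{it\Delta}\phi_1\,\overline{e^{it\Delta}\phi_2}$ --- in $L^1_t$ or $L^2_t$ at regularity $s$. This is where the refined bilinear/Bourgain-type estimates of \cite{HTX} are needed and where the near-threshold case is delicate; by comparison, the combinatorial bookkeeping, the Bochner-integral manipulations, and the reduction to the scalar NLS are routine.
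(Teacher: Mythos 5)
The paper does not prove Theorem \ref{thm-uniqueness-2}; it is quoted verbatim from Hong--Taliaferro--Xie \cite{HTX} and used as a black box in the proofs of Theorem \ref{main} and Proposition \ref{prop:higher_cons}. There is therefore no in-paper proof to compare against, and an appeal to \cite{HTX} is exactly what the authors intend at this point.

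With that caveat, your sketch does track the CHPS/HTX strategy --- iterated Duhamel expansion, insertion of the quantum de Finetti representation at the terminal time, reduction of trace-class bounds to multilinear space-time estimates on free evolutions of scalar functions, and derivation of the representation formula from uniqueness by exhibiting the de Finetti average of NLS flows as an admissible mild solution with the same data. Two steps are too glib, however. First, the claim that the simplex volume $T_0^r/r!$ alone absorbs the factorial count of contraction patterns is not how these arguments close: once the $B$-operators and propagators are interleaved, the per-term space-time estimate does not cleanly yield $T_0^r/r!$, and one genuinely needs the Klainerman--Machedon board-game reorganization (or the analogous tree/partition bookkeeping used in \cite{CHPS,HTX}) to collapse the sum before the time integration can help; this is a real combinatorial lemma, not an automatic consequence of the volume of the simplex. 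Second, for the representation formula you invoke $L^2$-subcriticality of the cubic NLS to get existence of $\phi(t)$ on all of $[0,T)$, but that holds only for $d=1$; the theorem is stated for general $d$, including focusing $L^2$-(super)critical cases, where global existence can fail. What \cite{HTX} actually needs is that, for $\mu$-a.e.\ $\phi_0$, the NLS solution persists on the same interval with $H^s$ bounds uniform over $\mathrm{supp}\,\mu$, which has to be extracted from the a priori $\frH^s$ bound $\sup_t\|\gamma^{(k)}(t)\|_{\frh^s_k}<R^{2k}$ (via the Chebyshev argument of Lemma \ref{smeasure} applied at each $t$), not from subcriticality. You correctly identify the endpoint multilinear estimates at $s=d/6$ as the hard analytic input; that is precisely the content the present paper defers entirely to \cite{HTX}.
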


We end this subsection by proving the following result which we will crucially, and implicitly, use in many of our proofs. It essentially states that if the initial data $  \gamma^{(k)}(0) $ is smoother than $L^2$, then the de Finetti integration takes place on a smoother domain. More precisely we have the following proposition.
\begin{prop}\label{prop:bdd}
If   $(\gamma^{(k)}(0))_{k\in\bN} \in\frH^s$, satisfies the assumptions of the weak or strong quantum de Finetti theorem, then 
\begin{equation}\label{extrasmooth}
    \gamma^{(k)}(0) = \int_{C_s}  d\mu(\phi_0) (|\phi_0\rangle\langle\phi_0|)^{\otimes k} 
    \;\;\;\;\;\;\forall k\in\bN\,,
\end{equation}  
where $C_s$ is a bounded subset of $L^2\cap H^s$.
\end{prop}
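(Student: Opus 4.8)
The plan is to convert the membership $(\gamma^{(k)}(0))_{k\in\bN}\in\frH^s$ into a family of scalar moment bounds on the de Finetti measure — one for each $k$ — and then to upgrade these to a uniform bound on the support of the measure by letting $k\to\infty$.

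First I would apply the relevant quantum de Finetti theorem (Theorem \ref{thm-strongDeFinetti} if $(\gamma^{(k)}(0))_{k\in\bN}$ is admissible, Theorem \ref{thm-weakDeFinetti} if it arises from a weak-$*$ limit) to produce a Borel probability measure $\mu$ on the unit ball of $L^2(\R^d)$, invariant under multiplication by unit-modulus constants, with $\gamma^{(k)}(0)=\int d\mu(\phi_0)\,(|\phi_0\rangle\langle\phi_0|)^{\otimes k}$ for all $k$. The crucial structural observation is that each $\gamma^{(k)}(0)$ is non-negative and, writing $\mathcal S^{(k)}:=\prod_{j=1}^k(1-\Delta_{x_j})^{s/2}$ for the self-adjoint operator acting on the unprimed variables only, one has $S^{(k,s)}\gamma^{(k)}(0)=\mathcal S^{(k)}\gamma^{(k)}(0)\,\mathcal S^{(k)}$ — this is the standard identification of $(1-\Delta_{x_j'})^{s/2}$ applied to a kernel in the primed variables with right multiplication by $(1-\Delta_{x_j})^{s/2}$. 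Consequently $S^{(k,s)}\gamma^{(k)}(0)\ge 0$, so the absolute value in Definition \ref{equ:banach_space_density_matrices} is superfluous and $\|\gamma^{(k)}(0)\|_{\frh^s_k}=\Tr\bigl(\mathcal S^{(k)}\gamma^{(k)}(0)\,\mathcal S^{(k)}\bigr)$. On a rank-one tensor power, $\mathcal S^{(k)}(|\phi_0\rangle\langle\phi_0|)^{\otimes k}\mathcal S^{(k)}=\bigl(|(1-\Delta)^{s/2}\phi_0\rangle\langle(1-\Delta)^{s/2}\phi_0|\bigr)^{\otimes k}$, whose trace is $\|\phi_0\|_{H^s}^{2k}$.

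The one genuinely delicate point is exchanging the unbounded positive operator $\mathcal S^{(k)}$ with the de Finetti integral (a Bochner integral in trace class, resp. a weak-$*$ integral). I would handle this by a frequency cutoff: let $\chi_N:=\mathbf{1}_{\{1-\Delta\le N\}}$ on $L^2(\R^d)$ and $\Pi_N:=\chi_N^{\otimes k}$ on $\cH^k$, so that $\Pi_N\mathcal S^{(k)}$ is bounded and the interchange with the integral is elementary, yielding
\[
\Tr\bigl(\Pi_N\,\mathcal S^{(k)}\gamma^{(k)}(0)\,\mathcal S^{(k)}\,\Pi_N\bigr)=\int d\mu(\phi_0)\,\|\chi_N(1-\Delta)^{s/2}\phi_0\|_{L^2}^{2k}.
\]
Letting $N\to\infty$, the left-hand side increases to $\Tr\bigl(\mathcal S^{(k)}\gamma^{(k)}(0)\mathcal S^{(k)}\bigr)=\|\gamma^{(k)}(0)\|_{\frh^s_k}$ by positivity (since $\Pi_N$ increases to the identity), while the right-hand side increases to $\int d\mu(\phi_0)\,\|\phi_0\|_{H^s}^{2k}$ by the monotone convergence theorem. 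Everything is compatible with the bosonic symmetrization since $\mathcal S^{(k)}$ and $\Pi_N$ are permutation-symmetric. We arrive at
\[
\|\gamma^{(k)}(0)\|_{\frh^s_k}=\int d\mu(\phi_0)\,\|\phi_0\|_{H^s}^{2k},\qquad\forall\,k\in\bN.
\]

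To conclude, note that $(\gamma^{(k)}(0))_{k\in\bN}\in\frH^s$ supplies a single finite constant $R$ with $\int\|\phi_0\|_{H^s}^{2k}\,d\mu<R^{2k}$ for \emph{all} $k$; this uniformity in $k$ is exactly the hypothesis that drives the argument. Chebyshev's inequality then gives, for every $\varepsilon>0$,
\[
\mu\bigl(\{\phi_0:\|\phi_0\|_{H^s}\ge R+\varepsilon\}\bigr)\le\Bigl(\tfrac{R}{R+\varepsilon}\Bigr)^{2k}\longrightarrow 0\quad\text{as }k\to\infty,
\]
so $\mu(\{\|\phi_0\|_{H^s}>R\})=0$. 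Combining with the fact that $\mu$ is already supported in the $L^2$ unit ball, $\mu$ gives full measure to $C_s:=\{\phi_0:\|\phi_0\|_{L^2}\le1,\ \|\phi_0\|_{H^s}\le R\}$, which is bounded in $L^2\cap H^s$ (and $L^2$-closed, by weak compactness of bounded sets in $H^s$; the map $\phi_0\mapsto\|\phi_0\|_{H^s}\in[0,\infty]$ is Borel on $L^2$, being a countable supremum of continuous functions). Since $\mu(L^2\setminus C_s)=0$, restricting the integral to $C_s$ yields \eqref{extrasmooth}. The main obstacle throughout is making the commutation of $\mathcal S^{(k)}$ with the de Finetti integral and the ensuing trace identity rigorous; the spectral-cutoff and monotone-convergence device above is designed precisely to do that.
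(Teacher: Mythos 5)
Your proof is correct and follows essentially the same route as the paper: use the de Finetti representation to rewrite the $\frh^s_k$-norm of $\gamma^{(k)}(0)$ as the $k$-th moment $\int d\mu\,\|\phi_0\|_{H^s}^{2k}$, then apply a Chebyshev/moment argument (the paper delegates this to Lemma~\ref{smeasure}) and let $k\to\infty$ to conclude that $\mu$-almost every $\phi_0$ satisfies $\|\phi_0\|_{H^s}\le M$. The one place you go beyond the paper is in rigorously justifying the identity $\|\gamma^{(k)}(0)\|_{\frh^s_k}=\int d\mu\,\|\phi_0\|_{H^s}^{2k}$: the paper simply asserts that \eqref{1extrasmooth} is equivalent to \eqref{extraphi}, whereas you supply a spectral-cutoff and monotone-convergence argument to commute the unbounded operator $\mathcal S^{(k)}$ with the de Finetti integral, which is a genuine (if minor) gap in the paper's exposition that your version fills.
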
 

To prove this proposition, we begin by recording \cite[Lemma 4.5]{CHPS}, and its proof for completeness.

\begin{lem}[\protect{\cite[Lemma 4.5]{CHPS}}]\label{smeasure}
Let $s > 0$ and let $\mu$ be a Borel probability measure
supported  either on the unit sphere or on the unit ball in $L^2(\R^d)$.
If 
\begin{equation}\label{lem-mu-Hs-if} 
\int d\mu(\phi) \|\phi\|_{H^s({\R}^d)}^{2k} < M^{2k}, 
\end{equation} 
for some finite $M>0$ and all $k \in \bN$, 
then 
\begin{equation}\label{lem-mu-Hs-conc}
\mu \left( \{ \phi \in L^2({\R}^d) \; | \; \|\phi\|_{H^s({\R}^d)} > \lambda \} \right) = 0, 
\end{equation} 
for all $\lambda > M$. 
\end{lem}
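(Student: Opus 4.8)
The plan is to argue by contradiction, exploiting the fact that the $L^{2k}$-moment bound \eqref{lem-mu-Hs-if} grows only like $M^{2k}$ while, on the set where $\|\phi\|_{H^s} > \lambda$ with $\lambda > M$, the integrand $\|\phi\|_{H^s}^{2k}$ grows at least like $\lambda^{2k}$. Since $\lambda/M > 1$, as $k \to \infty$ the contribution from that set must eventually exceed the total moment bound unless the set is $\mu$-null.

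Concretely, I would proceed as follows. First, fix $\lambda > M$ and set $E_\lambda := \{\phi \in L^2(\R^d) : \|\phi\|_{H^s(\R^d)} > \lambda\}$; this is a Borel (indeed open) subset of $L^2$, so $\mu(E_\lambda)$ is well-defined. Second, for each $k \in \bN$ estimate
\[
M^{2k} > \int d\mu(\phi)\, \|\phi\|_{H^s(\R^d)}^{2k} \geq \int_{E_\lambda} d\mu(\phi)\, \|\phi\|_{H^s(\R^d)}^{2k} \geq \lambda^{2k}\, \mu(E_\lambda),
\]
using nonnegativity of the integrand to restrict the domain and then the lower bound $\|\phi\|_{H^s} > \lambda$ on $E_\lambda$. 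Third, rearrange to obtain $\mu(E_\lambda) < (M/\lambda)^{2k}$ for every $k$. Fourth, since $M/\lambda < 1$, let $k \to \infty$ to conclude $\mu(E_\lambda) = 0$, which is exactly \eqref{lem-mu-Hs-conc}.

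There is essentially no serious obstacle here: the only point requiring a word of care is that the hypothesis \eqref{lem-mu-Hs-if} is stated with a strict inequality uniformly in $k$, so one should note that $\mu$ is a probability measure (hence the moments are automatically finite-relevant only to ensure the chain of inequalities is not vacuous) and that the constant $M$ does not depend on $k$, which is what makes the geometric decay $(M/\lambda)^{2k} \to 0$ work. One might also remark that the same argument shows the $H^s$-norm is $\mu$-essentially bounded by $M$, i.e.\ $\mu(\{\|\phi\|_{H^s} > M\})$ need not vanish but $\mu(\{\|\phi\|_{H^s} > \lambda\}) = 0$ for all $\lambda > M$, so $\operatorname{ess\,sup}_\mu \|\phi\|_{H^s} \leq M$; this is the form in which the lemma will be used to deduce Proposition \ref{prop:bdd}.
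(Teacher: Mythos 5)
Your proof is correct and is essentially the paper's own: both apply Chebyshev's (Markov's) inequality to the $2k$-th moment of $\|\phi\|_{H^s}$ to get $\mu(E_\lambda) \leq (M/\lambda)^{2k}$, and then send $k \to \infty$ using $M/\lambda < 1$. The only cosmetic difference is that you write out the restriction-of-integral step explicitly rather than citing Chebyshev by name; in fact your version corrects a small typographical slip in the paper's display, where $1/\lambda^{k}$ should read $1/\lambda^{2k}$.
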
 

\begin{proof} 
By Chebyshev's inequality, for any $\lambda > 0$ and any $k \in \bN$
we have 
\begin{align}
\mu \left( \{ \phi \in L^2({\R}^d) \; | \; \|\phi\|_{H^s({\R}^d)} >\lambda \} \right)  &\leq \frac{1}{\lambda^{k}} \int d\mu(\phi) \|\phi\|_{H^s({\R}^d)}^{2k} \nonumber \\
&\leq \frac{M^{2k}}{\lambda^{2k}},
\end{align}
which for $\lambda > M$ goes to zero as $k \rightarrow \infty$. 
\end{proof} 

Now, we will apply this Lemma to prove Proposition \ref{prop:bdd}:

\begin{proof}[Proof of Proposition \ref{prop:bdd}]
This proof again uses an argument analogous  to the one  presented in \cite[Lemma 4.5]{CHPS}. Since $(\gamma^{(k)}(0))_{k\in\bN} \in\frH^s$, one has that 
\begin{equation}\label{1extrasmooth}
Tr(|S^{(k,s)}[\gamma^{(k)}(0)]|)\lesssim M^{2k} \,\,, \qquad \forall k\in\bN\,,
\end{equation}
for some $M>0$. We note that \eqref{1extrasmooth} is equivalent to
\begin{equation}\label{extraphi}
\int  d\mu(\phi_0) \|\phi_0\|_{H^s}^{2k}\lesssim M^{2k},
\end{equation}
thus our assumptions yield \eqref{lem-mu-Hs-if}, and by \eqref{extraphi} and Lemma \ref{smeasure}, \eqref{extrasmooth} follows.
\end{proof}

\begin{rmk}
Often we will suppress the time dependence of the solutions $\phi$ to the nonlinear Schr\"odinger equation, writing $\phi(t,x) = \phi(x)$, but we will make the dependence explicit where necessary.
\end{rmk}

\section{Definition of the $\W_{n}^j $  operators on density matrices} \label{sec:wn_def}

In this section, we explicitly define the operators  $\{\W_{n}^j \}_{n}$ which appear in the summary statement of our main result. In \S\ref{sub-W-f}, we will first define the operators on factorized density matrices and show that they are well-defined, bounded linear operators. Then in \S\ref{functional} we present some functional analytic tools that we shall use in \S\ref{sub-W} to extend the definition of  the operators  $\{\W_{n}^j \}_{n}$ from factorized to arbitrary density matrices.  


\subsection{The $\W_{n}^j $ operators on factorized density matrices} \label{sub-W-f} 

We start by introducing the operators   $\{\W_{n}^j \}_{n}$ that act on factorized density matrices. We will use the function spaces $\mathfrak{h}_{m}^{ \alpha} $ from Definition \ref{equ:banach_space_density_matrices}. We note that the appearance of the index $j$ in the following definition is necessary to produce a consistent recursive definition, but in our application we will fix $j=1$.

\begin{defn} \label{def:GP-f-w}
For a non-negative integer $n$ and any integer $1\leq j \leq n+1$ and $\alpha > 0$ we define the operators\footnote[7]{Here, we abuse notation and denote the space of factorized solutions using the same notation as for general solutions. As we will see, these $\W_n^j$ extend to the whole space, see Proposition \ref{prop:ops_defined}.}
\[
\W^j_{n+1}: \mathfrak{h}_{n+1}^{n/2 + \alpha} \rightarrow \mathfrak{h}_{1}^{ \alpha}  
\]
on factorized density matrices via:  

\begin{align} 
&\W_1^1  \Biggl( \phi(x_1) \barphi(x'_1) \Biggr) 
 =   \phi(x_1) \barphi(x'_1)  \label{stat-equ:GP-f-w1}\\
&\W_{n+1}^j \Biggl(  \prod_{l=j}^{j+n} \phi(x_\ell) \barphi(x'_\ell) \Biggr) 
 = -i \partial_{x_j} \W_{n}^{j} \Tr_{j+n} \Biggl(  \prod_{l=1}^{j+n} \phi(x_\ell) \barphi(x'_\ell) \Biggr) \nonumber \\
& + \; \; \; \; \sum_{k=1}^{n-1} B_{j,j+k} 
{\color{red}{
\W_{k}^{j} 
}}
\otimes 
{\color{blue}{
\W_{n-k}^{j+k} 
}}
\Tr_{j+n}
\Biggl(  
{\color{red}{
\prod_{l=j}^{j+k-1} \phi(x_\ell) \barphi(x'_\ell)   
}}
{\color{blue}{
\prod_{l=j+k}^{j+n-1} \phi(x_\ell) \barphi(x'_\ell) 
}}
\Biggr)  
\phi(x_{j+n}) \barphi(x'_{j+n}), 
\label{stat-equ:GP-f-wn} 
\end{align}
for any $\phi \in H^{n/2 +\alpha}$ and such that $\|\phi\|_{L^2} = 1$. 
\end{defn} 

\medskip
%
The operator
$\W_{k}^{j} \otimes \W_{n-k}^{j+k} \Tr_{j+n}$ acts on   $\Biggl(  \prod_{l=j}^{j+n} \phi(x_\ell) \barphi(x'_\ell) \Biggr)$ 
in the following way: 
\begin{itemize} 
\item   By recalling the definition of a partial trace and keeping in mind the normalization  $\|\phi\|_{L^2} = 1$, we note that 
$\Tr_{j+n}$ acts on   $\Biggl(  \prod_{l=j}^{j+n} \phi(x_\ell) \barphi(x'_\ell) \Biggr)$ 
so that we have: 
\[
	\Tr_{j+n} \Biggl(  \prod_{l=j}^{j+n} \phi(x_\ell) \barphi(x'_\ell) \Biggr)
	= \int \; dx_{j+n}  \; dx'_{j+n}  \; \delta(x_{j+n} - x'_{j+n}) \; \prod_{l=j}^{j+n} \phi(x_\ell) \barphi(x'_\ell) 
	=   \prod_{l=j}^{j+n-1} \phi(x_\ell) \barphi(x'_\ell) 
\]
\item $\W_{k}^{j}$ acts on  $k$ double-factors 
\[
\prod_{l=j}^{j+k-1} \phi(x_\ell) \barphi(x'_\ell) =  \phi(x_j) \barphi(x'_j) \cdot \dots \cdot  \phi(x_{j+k-1}) \barphi(x'_{j+k-1})
\]
\item $\W_{n-k}^{j+k}$ acts on the next $n-k$ double-factors 
\[
\prod_{l=j+k}^{j+n-1} \phi(x_\ell) \barphi(x'_\ell) =  \phi(x_{j+k}) \barphi(x'_{j+k}) \cdot \dots \cdot  \phi(x_{j+k+n-k-1}) \barphi(x'_{j+k+n-k-1}).
\]
\end{itemize}

In order to show that the operators $W_{n}^j$ in Definition \ref{def:GP-f-w} are well-defined bounded linear operators, we first point out the following boundedness property of the 
collision operator $B_{i,j}$: 

\begin{lem}\label{lem:b_bounds}
The collision operator $B_{i,j}: \mathfrak{h}_k^1 \to \mathfrak{h}_{k-1}^0 $ is a bounded linear operator on density matrices.
\end{lem}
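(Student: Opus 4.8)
The plan is to unfold the definitions of the norms $\|\cdot\|_{\mathfrak{h}^1_k}$ and $\|\cdot\|_{\mathfrak{h}^0_{k-1}}$ and the collision operator $B_{i,j} = B^+_{i,j}$, and reduce the operator bound to a pointwise estimate on kernels. Recall that $B^+_{i;j}\gamma^{(k)}$ is obtained from $\gamma^{(k)}$ by setting $x_j = x_j' = x_i$ in the appropriate slot and integrating out the $j$-th variable via the two delta functions. In kernel language, if $\gamma^{(k)}$ has kernel $\gamma^{(k)}(\ux_k;\ux_k')$, then $(B_{i,j}\gamma^{(k)})$ has a kernel supported on $k-1$ variables obtained by the substitution described in \eqref{equ:collision_intro}. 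The quantity we must control is $\Tr(|S^{(k-1,0)} B_{i,j}\gamma^{(k)}|) = \Tr(|B_{i,j}\gamma^{(k)}|)$, which we want to bound by $\Tr(|S^{(k,1)}\gamma^{(k)}|)$.

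The key step is the reduction, already standard in the GP hierarchy literature (it appears in Klainerman--Machedon \cite{KM08} and Chen--Pavlovi\'c \cite{CP10,CP11}), that the contraction operators $B_{i,j}$ are controlled by one factor of $(1-\Delta_{x_i})^{1/2}(1-\Delta_{x_i'})^{1/2}$ in the appropriate trace norm. Concretely, I would first use the triangle inequality to reduce to the single term $B^+_{i;j}$ (here just $B_{i,j}$ by the notational convention). Then, writing $\gamma^{(k)}$ in its spectral/singular-value decomposition $\gamma^{(k)} = \sum_\ell \lambda_\ell |f_\ell\rangle\langle g_\ell|$ with $\sum_\ell |\lambda_\ell| \|f_\ell\|\,\|g_\ell\|$ essentially the trace norm (after absorbing the weight $S^{(k,1)}$), the action of $B_{i,j}$ amounts to restriction/evaluation maps on the $f_\ell, g_\ell$. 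The main analytic input is a trace (Schur/Cauchy--Schwarz type) estimate showing that the operation of identifying variables $x_i = x_j = x_j'$ and integrating is bounded from $H^1$ in those variables to $L^2$, i.e. the one-dimensional trace theorem $\|u|_{x_j = x_i}\|_{L^2_{x_i}} \lesssim \|u\|_{H^1_{x_i,x_j}}$ combined with the fact that in $d=1$ the pointwise evaluation $\delta(x_j - x_{k+1})$ costs only half a derivative. Assembling these, one sees each $B_{i,j}$ consumes exactly one full power of the weight $(1-\Delta_{x_i})^{1/2}(1-\Delta_{x_i'})^{1/2}$ (there is an extra $x_i'$-derivative available because the delta function also matches $x_{k+1} = x_{k+1}'$ in the $B^+$ term, and a symmetric argument handles it), yielding $\|B_{i,j}\gamma^{(k)}\|_{\mathfrak{h}^0_{k-1}} \lesssim \|\gamma^{(k)}\|_{\mathfrak{h}^1_k}$.

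I expect the main obstacle to be the careful bookkeeping of \emph{which} variable carries the derivative weight and verifying that a single power of $S$ in the $i$-th slot suffices — in particular checking that the $H^1$-to-$L^2$ trace estimate in one dimension really only needs one derivative (this is sharp: in $d=1$, $H^{1/2+\varepsilon} \hookrightarrow$ is enough for pointwise restriction, so $H^1$ is comfortably sufficient, but one must be careful that after contraction the remaining $x_i$-dependence is still controlled). A clean way to organize this is to pass to the Fourier side in the collided variables: the kernel of $B_{i,j}\gamma^{(k)}$ in frequency is an integral of $\widehat{\gamma^{(k)}}$ over the frequency dual to $x_j$ (and its primed partner), and one estimates this integral by Cauchy--Schwarz, pulling out $\int \langle\xi_j\rangle^{-2}\langle\xi_j'\rangle^{-2}\,d\xi_j\,d\xi_j' < \infty$ in $d=1$, which is exactly where the dimension $d=1$ and the single power of the weight enter. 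Since this boundedness statement is essentially \cite[Lemma]{KM08}-type and is used only qualitatively in what follows, I would keep the proof short, citing the standard one-dimensional estimate and indicating the decomposition above rather than reproving the trace theorem.
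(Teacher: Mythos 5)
Your approach is genuinely different from the paper's, and more ambitious: the paper does \emph{not} attempt a general trace-class operator bound. It proves the estimate only for tensorized kernels $\gamma^{(k)}=\prod_{i}\phi_i(x_i)\overline{\phi_i}(x_i')$, where $B_{i,j}$ acts as explicit multiplication by $|\phi_j(x_i)|^2$, and the trace is then a single integral $\int |\phi_i|^2|\phi_j|^2$ controlled by H\"older plus Gagliardo--Nirenberg in $d=1$. The ``general statement follows from density'' is stated without elaboration, but this is harmless because the operators $\W_n^j$ are only ever applied, via the de Finetti/Bochner framework of Proposition \ref{prop:ops_defined}, to averages of factorized states. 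Your route is instead the Klainerman--Machedon/Fourier-analytic one, aimed at arbitrary density matrices.

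There is a genuine gap in the singular-value-decomposition step. Writing $\gamma^{(k)}=\sum_\ell\lambda_\ell\,|f_\ell\rangle\langle g_\ell|$ and trying to bound $\Tr|B_{i,j}\gamma^{(k)}|\le\sum_\ell\lambda_\ell\,\Tr|B_{i,j}(|f_\ell\rangle\langle g_\ell|)|$ runs into the problem that $B_{i,j}$ does \emph{not} preserve rank one. The image kernel is $f_\ell(\ux_{\hat j},x_i)\,\overline{g_\ell}(\ux'_{\hat j},x_i)$, in which the single unprimed variable $x_i$ is shared between the left and right factors; this is not a rank-one operator on $L^2(\R^{k-1})$, and its trace norm is not $\|f_\ell(\cdot,x_i)\|_{L^2}\|g_\ell(\cdot,x_i)\|_{L^2}$. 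In fact, estimating $\Tr|B_{i,j}(|f\rangle\langle g|)|$ requires controlling something like the trace norm (not Hilbert--Schmidt norm) of a restriction operator, which is exactly the point where trace-class bounds for $B$ are substantially harder than the $L^2$-based Klainerman--Machedon bounds you cite. The Fourier estimate you sketch, pulling out $\int\langle\xi_j\rangle^{-2}\langle\xi_j'\rangle^{-2}\,d\xi_j\,d\xi_j'$, is indeed the right computation for the Hilbert--Schmidt/space-time-norm version of the collision-operator bound, but it does not directly yield an $S^1$ estimate. Beyond this structural gap, note that you never actually land the one-dimensional Sobolev estimate ($H^1\hookrightarrow L^4$, or Gagliardo--Nirenberg) that is the quantitative content of the lemma; the proof is mostly a plan to cite rather than an argument. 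For the purposes of this paper the far more economical route is the one taken there: verify the bound on tensor states, where everything is explicit, and let the Bochner-integral machinery handle the passage to the de Finetti averages.
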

\begin{proof}
We will prove this statement for tensorized functions, and the general statement follows from density. The linearity of $B_{i,j}$ is immediate from its definition as a convolution operator. First we consider $k=2$ and we compute
\[
B_{i,j} f(x_i) \overline{f}(x_i') g(x_j) \overline{g}(x_j') = f(x_i) \overline{f}(x_i') |g(x_i)|^2.
\]
We note that for factorized density matrices $\gamma^{(k)}(\ux_k, \ux_k') = \prod_{i=1}^k \phi(x_i) \overline{\phi}(x_i')$, one has $\gamma^{(k)}(\ux_k, \ux_k') \in \mathfrak{h}_k^1$ if and only if each one-particle function $\phi(x) \in H^1$. Now,
\[
\| B_{i,j} f(x_i) \overline{f}(x_i') g(x_j) \overline{g}(x_j') \|_{ \mathfrak{h}_{1}^0} = \textup{Tr}(|B_{i,j} f(x_i) \overline{f}(x_i') g(x_j) \overline{g}(x_j')| )
\]
and we compute
\[
\textup{Tr}(|B_{i,j} f(x_i) \overline{f}(x_i') g(x_j) \overline{g}(x_j')| )  = \int |f(x)|^2 |g(x)|^2 \leq \Biggl( \int |f(x)|^4 \Biggr)^{1/2} \Biggl( \int |g(x)|^4 \Biggr)^{1/2} 
\]
using H\"older's inequality. So the right-hand side is bounded for $f, g \in H^1$ by the Gagliardo-Niremberg inequality, and hence we have
\[
\textup{Tr}(|B_{i,j} f(x_i) \overline{f}(x_i') g(x_j) \overline{g}(x_j')| )  \leq \|f\|_{H^1}^{2} \|g\|_{H^1}^{2} \leq \| f \otimes g \|_{\mathfrak{h}_2^1}.
\]
 For $k > 2$, we may extend the above argument by tensoring the operator $B_{i,j}$ with the identity.
\end{proof}

We are now prepared to justify our definition of the operators $\W_{n}^j$ in Definition \ref{def:GP-f-w}.

\begin{prop}\label{prop:ops_bddness_defn}
Let $n \in \bN$ and $1 \leq j \leq n+1$ and $\alpha \geq 0$. The operators $\W_{n}^j$ in Definition \ref{def:GP-f-w} are well-defined bounded linear operators
\[
\W_{n}^j : \mathfrak{h}_{n}^{(n-1)/2 + \alpha} \to \mathfrak{h}_{1}^{\alpha}.
\]
\end{prop}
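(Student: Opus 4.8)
The plan is to prove Proposition \ref{prop:ops_bddness_defn} by induction on $n$, following the recursive structure of Definition \ref{def:GP-f-w}. The base case $n=1$ is immediate since $\W_1^1 = Id_1$ is obviously a bounded linear operator from $\mathfrak{h}_1^\alpha$ to itself. For the inductive step, I assume that for all $m < n+1$ and all admissible superscripts, $\W_m^{j}: \mathfrak{h}_m^{(m-1)/2+\alpha} \to \mathfrak{h}_1^\alpha$ is a well-defined bounded linear operator, and I analyze the two types of terms appearing in \eqref{stat-equ:GP-f-wn} separately. Since all operators are being defined on factorized density matrices, it suffices to track the one-particle function $\phi$: a factorized state $\prod_{l=j}^{j+n}\phi(x_\ell)\barphi(x_\ell')$ lies in $\mathfrak{h}_{n+1}^{n/2+\alpha}$ precisely when $\phi \in H^{n/2+\alpha}$ (with $\|\phi\|_{L^2}=1$), and linearity/boundedness on the full space then follows by density of finite linear combinations of factorized states, as in Lemma \ref{lem:b_bounds}.

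For the first term, $-i\partial_{x_j}\W_n^j \Tr_{j+n}(\prod_{l=1}^{j+n}\phi\barphi)$: the partial trace $\Tr_{j+n}$ removes one double-factor using $\|\phi\|_{L^2}=1$, producing a state in $\mathfrak{h}_n^{?}$ whose one-particle function is still $\phi \in H^{n/2+\alpha} = H^{(n-1)/2 + (1/2+\alpha)}$. By the inductive hypothesis applied with the shifted regularity parameter $\alpha' = 1/2+\alpha$, $\W_n^j$ maps this into $\mathfrak{h}_1^{1/2+\alpha}$, i.e. a state whose one-particle function lies in $H^{1/2+\alpha}$; finally $\partial_{x_j}$ costs one derivative, landing in $\mathfrak{h}_1^{\alpha}$, which is exactly the claimed target. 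For the sum over $k$, I need to control $B_{j,j+k}\bigl(\W_k^j \otimes \W_{n-k}^{j+k}\bigr)\Tr_{j+n}(\cdots)\,\phi(x_{j+n})\barphi(x_{j+n}')$. The outermost factor $\phi(x_{j+n})\barphi(x_{j+n}')$ is a harmless tensor factor (it just contributes $\|\phi\|_{H^\alpha}^2$, since it survives into the $\mathfrak{h}_1^\alpha$ norm after — wait, more carefully, the $B_{j,j+k}$ and $\W$ operators act on the first indices and this last factor is separate; one tensors with identity as in Lemma \ref{lem:b_bounds}). Applying the inductive hypothesis, $\W_k^j$ maps its $k$ double-factors (one-particle function $\phi \in H^{(k-1)/2+\alpha'}$ for a suitable $\alpha'$) into $\mathfrak{h}_1^{1}$, and likewise $\W_{n-k}^{j+k}$ maps into $\mathfrak{h}_1^1$; the key point is that the regularity budget $n/2+\alpha$ on the original state is enough to feed both sub-operators at the regularity they need to output $H^1$ functions, because $(k-1)/2 + (n-k)/2 + \text{overhead} \le n/2$ with room to spare from the $\alpha$. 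The resulting tensor product lies in $\mathfrak{h}_2^1$, so Lemma \ref{lem:b_bounds} applies to give $B_{j,j+k}(\cdots) \in \mathfrak{h}_1^0 \subseteq \mathfrak{h}_1^\alpha$ — here one should double-check whether we actually need $\mathfrak{h}_1^0$ or $\mathfrak{h}_1^\alpha$ as the output, and absorb the $\alpha$ appropriately; if $B$ only yields $\mathfrak{h}_1^0$ one sees that the first (differentiated) term is what carries the $\alpha$-regularity and the collision terms are automatically smoother than needed, or one re-examines the bookkeeping so that each sub-$\W$ is fed slightly more regularity and Lemma \ref{lem:b_bounds} outputs into $\mathfrak{h}_1^\alpha$ via a version of the Gagliardo--Nirenberg/Hölder estimate with derivatives.

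The main obstacle I anticipate is the careful accounting of regularity indices: one must verify that with a single one-particle function $\phi \in H^{n/2+\alpha}$, every recursive invocation of $\W_k^{j}$ and $\W_{n-k}^{j+k}$ in \eqref{stat-equ:GP-f-wn} can be supplied with enough smoothness to land in $\mathfrak{h}_1^1$ (so that Lemma \ref{lem:b_bounds} is applicable to $B_{j,j+k}$) while the leading term $-i\partial_{x_j}\W_n^j\Tr_{j+n}$ still has enough left over to produce an $\mathfrak{h}_1^\alpha$ output after losing a derivative. The inequality to check is, schematically, that feeding $\W_k$ the regularity it requires, $(k-1)/2$, and $\W_{n-k}$ the regularity $(n-k-1)/2$, together with the one derivative absorbed by $B$ and the trace costing nothing, is consistent with a total input of $n/2$: indeed $(k-1)/2 + (n-k-1)/2 + 1 = n/2$, so the budget is exactly balanced, and any $\alpha>0$ is pure surplus distributed to the output. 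I would also remark that linearity is inherited immediately from the recursion since each building block ($\partial_{x_j}$, $\Tr_{j+n}$, $B_{j,j+k}$, and the inductively-linear $\W$'s, together with tensoring) is linear, and well-definedness on all of $\mathfrak{h}_n^{(n-1)/2+\alpha}$ follows by extending from the dense subset of finite sums of factorized matrices using the established bounds.
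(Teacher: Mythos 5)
Your approach — complete induction on $n$, separating the differentiated leading term from the collision sum, and tracking regularity through each building block — is the same as the paper's, and your mapping chain for the first term $-i\partial_{x_j}\W_n^j\Tr_{j+n}$ matches the paper exactly: $\mathfrak{h}_{n+1}^{n/2+\alpha}\xrightarrow{\Tr}\mathfrak{h}_n^{n/2+\alpha}\xrightarrow{\W_n^j}\mathfrak{h}_1^{1/2+\alpha}\xrightarrow{\partial_{x_j}}\mathfrak{h}_1^\alpha$, using the inductive hypothesis with regularity parameter $1/2+\alpha$.

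The place where you have a genuine misstep is the collision sum, and it is precisely the place you hedge. The inclusion ``$\mathfrak{h}_1^0\subseteq\mathfrak{h}_1^\alpha$'' is backwards: for the inhomogeneous scale defined by $S^{(k,\alpha)}$, higher regularity gives a \emph{smaller} space, so $\mathfrak{h}_1^\alpha\subseteq\mathfrak{h}_1^0$ and not the reverse. Consequently, landing in $\mathfrak{h}_1^0$ does not give membership in $\mathfrak{h}_1^\alpha$, and your first fallback (``the collision terms are automatically smoother than needed'') does not help. Your second fallback is the right one, and it is exactly what the paper does: after tracing, embed $\mathfrak{h}_n^{n/2+\alpha}\simeq\mathfrak{h}_k^{n/2+\alpha}\otimes\mathfrak{h}_{n-k}^{n/2+\alpha}\subseteq\mathfrak{h}_k^{(k-1)/2+1+\alpha}\otimes\mathfrak{h}_{n-k}^{(n-k-1)/2+1+\alpha}$ (legitimate since $1\leq k\leq n-1$ forces $(k+1)/2\leq n/2$ and $(n-k+1)/2\leq n/2$), apply the inductive hypothesis on each tensor slot with regularity parameter $1+\alpha$ to land in $\mathfrak{h}_1^{1+\alpha}\otimes\mathfrak{h}_1^{1+\alpha}\simeq\mathfrak{h}_2^{1+\alpha}$, and then use $B_{j,j+k}:\mathfrak{h}_2^{1+\alpha}\to\mathfrak{h}_1^\alpha$. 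In other words, the $\alpha$ surplus must be carried through to \emph{both} sub-operators simultaneously, so that the collision operator is fed one derivative above $\alpha$ rather than one derivative above zero. Your budget identity $(k-1)/2+(n-k-1)/2+1=n/2$ correctly explains why there is enough regularity to make this work, but you need to actually run the recursion at the shifted parameter rather than at $\alpha=0$. One caveat worth noting: the collision-operator bound $B_{j,j+k}:\mathfrak{h}_2^{1+\alpha}\to\mathfrak{h}_1^\alpha$ is cited in the paper from Lemma~\ref{lem:b_bounds}, which as stated only proves the case $\alpha=0$; the $\alpha>0$ version needed here follows from the same H\"older and Gagliardo--Nirenberg argument together with a fractional Leibniz rule, which the paper leaves implicit.
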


\begin{proof}
We prove this claim by complete induction. The case $n=1$ is clear (with $j \leq 2$), and the statement about linearity follows  since the composition of linear operators is linear, so we focus on proving the boundedness claim. Suppose that the assumptions hold for all $1 \leq i \leq n$ and $1 \leq j \leq i+1$. Then
\[
W^j_{n+1} = - i  \partial_{x_j} \W_{n}^{j} \Tr_{j+n} + \sum_{k=1}^{n-1} B_{j,j+k} 
{\color{red}{
\W_{k}^{j} 
}}
\otimes 
{\color{blue}{
\W_{n-k}^{j+k} 
}}
\Tr_{j+n}.
\]
We index the variables as in \eqref{stat-equ:GP-f-wn} and consider the terms separately. The first term is a composition of three operators, which we can write as
\[
\mathfrak{h}_{n+1}^{n/2 + \alpha} \xrightarrow{\Tr_{j+n}} \mathfrak{h}_{n}^{n/2 + \alpha} \xrightarrow{\W_{n}^{j}} \mathfrak{h}_{1}^{1/2 + \alpha} \xrightarrow{\partial_{x_j}} \mathfrak{h}_{1}^{\alpha},
\]
where the properties of the operator $\W_n^j$ in the middle of this composition come from the inductive hypothesis. We now examine the second term. Here, we break the sum into components and we analyze each one. We first observe that
\[
\mathfrak{h}_{n+1}^{n/2 + \alpha} \xrightarrow{\Tr_{j+n}} \mathfrak{h}_{n}^{n/2 + \alpha} \simeq \mathfrak{h}_{k}^{n/2 + \alpha} \otimes \mathfrak{h}_{n-k }^{n/2 + \alpha} .
\]
Next, for $n \geq 2$ and $j$ as above, we have $n-k, k \geq 1$ in the sum, and hence we can decompose these spaces
\[
\mathfrak{h}_{k}^{n/2 + \alpha} \otimes \mathfrak{h}_{n-k}^{n/2 + \alpha} \simeq \mathfrak{h}_{k}^{(k-1)/2 + (n-k + 1)/2 + \alpha} \otimes \mathfrak{h}_{n-k}^{(n-k-1)/2 + (k+1)/2 + \alpha} \subseteq \mathfrak{h}_{k}^{(k-1)/2 + 1 + \alpha} \otimes \mathfrak{h}_{n-k}^{(n-k-1)/2 + 1 + \alpha},
\]
where the inclusion comes from the fact that we are working in inhomogeneous spaces.  Next, we write
\[
\mathfrak{h}_{k}^{(k-1)/2 + 1 + \alpha} \otimes \mathfrak{h}_{n-k }^{(n-k-1)/2 + 1 + \alpha} \xrightarrow{\W_{k}^{j} \otimes \W_{n-k}^{j+k}  } \mathfrak{h}_{1}^{1 + \alpha} \otimes \mathfrak{h}_{1}^{1 + \alpha} \simeq \mathfrak{h}_{2}^{1 + \alpha} \xrightarrow{B_{j, j+k}} \mathfrak{h}_{1}^{\alpha},
\]
where in the first step we use the inductive hypothesis, and in the second step we use  Lemma \ref{lem:b_bounds}. Thus, every term in the sum maps
\[
\mathfrak{h}_{n+1}^{n/2 + \alpha} \to \mathfrak{h}_{1}^{\alpha}
\]
and consequently a finite sum of such operators does as well. This proves the inductive step and completes the proof.
\end{proof}

\begin{rmk}\label{identity_tensor}
We note that by tensoring with the identity, we may extend the operator $\W_{n}^j $ to tensor products of arbitrary lengths $m \geq n$, that is we define the extension $\widetilde{\W}_{n}^j $ as
\[
\widetilde{\W}_{n}^j : \mathfrak{h}_{m}^{(n-1)/2 + \alpha} \to \mathfrak{h}_{m-n+1}^{\alpha}, \qquad \widetilde{\W}_{n}^j = \underbrace{I \otimes \cdots \otimes I}_{\ell\,\, times} \otimes \W_{n}^j \otimes \underbrace{I \otimes \cdots \otimes I}_{m-n+1 -\ell\,\, times} 
\]
for $0 \leq \ell \leq m - n +1$. In the sequel, we will denote such operators using the same notation $\W_{n}^j $ and the domain of the operator will be made explicit from the context.
\end{rmk}

The next step is to extend our definition of the operators $\W_n^j$ in \eqref{def:GP-f-w}-\eqref{stat-equ:GP-f-wn}  to an admissible sequence of density matrices. We will achieve this via using some functional analytic tools that we gather in the following subsection.

\subsection{Functional analytic toolbox} \label{functional}
We record here some basics on Bochner integrals. 

In the sequel, let $(A, \cA, \mu)$ be a $\sigma$-finite measure space and $E$ a Banach space. For measurable subsets $A_n$ we say $f$ is a $\mu$-simple function if
\[
f = \sum_{i=1}^n x_n \chi_{A_n}
\]
for $x_n \in E$, $\mu(A_n) < \infty$, and $\chi_{A_n}$ the characteristic function of $A_n$.
\begin{defn}
A function $f : A \to E$ is said to be $\mu$-Bochner measurable if there exists a sequence $\{f_n\}_{n \in \bN}$ of $\mu$-simple functions converging to $f$ $\mu$-almost everywhere.
\end{defn}

\begin{defn}
A function $f : A \to E$ is said to be $\mu$-Bochner integrable if there exists a sequence of $\mu$-simple functions $f_n : A \to E$ such that
\begin{enumerate}
\item $\lim_{n \to \infty} f_n = f$ $\mu$-almost everywhere,
\item $\lim_{n \to \infty} \int_A \|f_n - f\| d\mu = 0$,
\end{enumerate}
where the latter is the Lebesgue integral.
\end{defn}

We will rely on the following theorem of Hille, see for instance \cite[\S3.5]{HP}.
\begin{thm}[Hille] \label{thm:Hille}
Let $f : A \to E$ be $\mu$-Bochner integrable and let $T$ be a closed linear operator with domain $\cD(T)$ in $E$ taking values in a Banach space $F$. Assume that $f$ takes values in $\cD(T)$ $\mu$-almost everywhere and that the $\mu$-almost everywhere defined function $Tf: A \to E$ is $\mu$-Bochner integrable. Then $\int_A f d\mu \in \cD(T)$ and
\[
T \int_A f d\mu = \int_A Tf d\mu.
\]
\end{thm}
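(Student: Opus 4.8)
\textbf{Proof strategy for Theorem~\ref{thm:Hille} (Hille).} The plan is to realize the closed operator $T$ through its graph and transport the whole argument into the product Banach space. Since $T$ is closed, the graph $G(T) = \{(x,Tx) : x \in \cD(T)\}$ is a closed linear subspace of $E \times F$, equipped with the norm $\|(x,y)\| := \|x\|_E + \|y\|_F$ (the closedness of $G(T)$ in $E\times F$ is precisely the statement that $T$ is closed). Thus $G(T)$ is itself a Banach space, and within $E\times F$ the condition ``$x \in \cD(T)$ with $Tx = y$'' becomes the single membership statement ``$(x,y) \in G(T)$''. The key fact we then exploit is that the Bochner integral commutes with bounded linear maps (in particular the coordinate projections $\pi_E \colon E\times F \to E$ and $\pi_F \colon E\times F \to F$) and that the Bochner integral of a function valued $\mu$-a.e.\ in a closed subspace again lies in that subspace.

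First I would check that the $\mu$-a.e.\ defined map $g \colon A \to E\times F$, $g(a) := (f(a), Tf(a))$, is $\mu$-Bochner integrable. Bochner measurability of $g$ follows from that of $f$ and $Tf$: picking $\mu$-simple functions $f_n \to f$ and $h_n \to Tf$ $\mu$-almost everywhere, the $\mu$-simple functions $(f_n,h_n) \colon A \to E\times F$ converge to $g$ $\mu$-almost everywhere. For integrability, $\int_A \|g\|\,d\mu = \int_A \|f\|_E\,d\mu + \int_A \|Tf\|_F\,d\mu < \infty$ since $f$ and $Tf$ are Bochner integrable, and the triangle inequality gives $\int_A \|(f_n,h_n) - g\|\,d\mu \le \int_A \|f_n - f\|_E\,d\mu + \int_A \|h_n - Tf\|_F\,d\mu \to 0$. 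Hence $g$ is $\mu$-Bochner integrable into $E\times F$. Moreover, since $f$ takes values in $\cD(T)$ $\mu$-a.e., we have $g(a) \in G(T)$ for $\mu$-a.e.\ $a$, so by the subspace-invariance of the Bochner integral, $\int_A g\,d\mu \in G(T)$.

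Finally, applying the bounded linear projections, $\int_A g\,d\mu = \bigl(\int_A f\,d\mu,\ \int_A Tf\,d\mu\bigr)$. Unwinding the definition of $G(T)$, the membership $\bigl(\int_A f\,d\mu,\ \int_A Tf\,d\mu\bigr) \in G(T)$ says exactly that $\int_A f\,d\mu \in \cD(T)$ and $T\int_A f\,d\mu = \int_A Tf\,d\mu$, as desired. The only genuinely delicate input is the subspace-invariance of the Bochner integral; this is classical (it follows from Hahn--Banach, a closed subspace being the intersection of the kernels of the continuous functionals annihilating it, together with the commutation of the Bochner integral with continuous linear functionals), and one may alternatively invoke the Pettis measurability theorem to see that $g$ is Bochner measurable \emph{into} $G(T)$ and approximate it by $G(T)$-valued simple functions whose integrals lie in $G(T)$, passing to the limit by closedness. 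Since the result is standard, in the paper it suffices to cite \cite[\S3.5]{HP}, but the graph-space argument above is the natural self-contained route.
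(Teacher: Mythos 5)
The paper does not prove Theorem~\ref{thm:Hille} — it is cited directly from Hille--Phillips (\cite[\S3.5]{HP}) — so there is no in-text proof to compare against; but your graph-space argument is the standard textbook proof of Hille's theorem and it is correct and complete. You correctly identify the one auxiliary ingredient that needs justification, namely that the Bochner integral of a function valued $\mu$-a.e.\ in a closed subspace $S$ lands in $S$; the Hahn--Banach route you sketch (if $\ell$ is a continuous functional vanishing on $S$ then $\ell(\int_A g\,d\mu)=\int_A \ell(g)\,d\mu=0$, and $S$ is the intersection of the kernels of such $\ell$) is the cleanest way to close this, and the rest of your steps — Bochner measurability and integrability of $g=(f,Tf)$ in $E\times F$ with the sum norm, commutation of the integral with the coordinate projections, and unwinding the membership $\bigl(\int f, \int Tf\bigr)\in G(T)$ — are all standard and correctly executed. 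The only minor point worth flagging is that when you pick $\mu$-simple $f_n\to f$ and $h_n\to Tf$ you should pick them so that $\int\|f_n-f\|\to 0$ and $\int\|h_n-Tf\|\to 0$ (not merely a.e.\ convergence), which exists by Bochner integrability of $f$ and $Tf$ and which your triangle-inequality estimate then uses; your write-up does implicitly make this choice, so no gap.
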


In our case, we take $f(\phi) = (|\phi \rangle \langle\phi|)^{\otimes (n+1)}$ which (by construction) takes values in the domain of the operator $\W_{n}^j$. In verifying the hypotheses of Hille's theorem in our work, the following classical result, due to Bochner, provides a convenient criterion for determining whether a given $\mu$-Bochner measurable function is $\mu$-Bochner integrable. 
\begin{thm}\label{thm:l1_boch_int}
Let $(A, \cA, \mu)$ be a $\sigma$-finite measure space, $E$ a Banach space and let $f: A \to E$ be $\mu$-Bochner measurable. Then $f$ is $\mu$ Bochner integrable if and only if 
\begin{align} \label{equ:l1_bds}
\int_A d\mu \|f\|_{E} < \infty,
\end{align}
in which case
\[
\Biggl\| \int_A d\mu f \, \Biggr\|_E \leq \int_A d\mu \|f\|_{E}. 
\]
\end{thm}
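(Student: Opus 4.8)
The statement to prove is Theorem \ref{thm:l1_boch_int} (the Bochner integrability criterion), so my plan is as follows.

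\medskip

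\textbf{Overall approach.} The result is classical, so I would give the standard proof, organized into the two implications of the ``if and only if'' together with the concluding inequality. The forward implication (Bochner integrable $\Rightarrow$ $\int_A \|f\|_E \, d\mu < \infty$) is the easy direction and follows almost immediately from the definition. The reverse implication (integrability of the scalar function $\|f\|_E$ $\Rightarrow$ $f$ is Bochner integrable) is the substantive direction: I need to manufacture, from a sequence of $\mu$-simple functions $\{f_n\}$ converging to $f$ $\mu$-a.e.\ (which exists since $f$ is Bochner \emph{measurable}), a new sequence of simple functions that \emph{also} satisfies $\int_A \|g_n - f\|_E \, d\mu \to 0$. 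The standard device is truncation: replace $f_n$ by $g_n := f_n \, \chi_{\{ \|f_n\|_E \le (1 + \varepsilon)\|f\|_E \}}$ (or a similar truncation at a level slightly above $\|f\|_E$), which is still $\mu$-simple, still converges to $f$ $\mu$-a.e., and now satisfies the pointwise bound $\|g_n - f\|_E \le \|g_n\|_E + \|f\|_E \le (2 + \varepsilon)\|f\|_E \in L^1(\mu)$; then the dominated convergence theorem for scalar functions gives $\int_A \|g_n - f\|_E \, d\mu \to 0$.

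\medskip

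\textbf{Key steps, in order.} First, for the forward direction: assume $f$ is Bochner integrable with approximating simple functions $f_n$; then $\|f\|_E \le \|f_n - f\|_E + \|f_n\|_E$, and since each $\|f_n\|_E$ is a scalar simple function with finite integral and $\int_A \|f_n - f\|_E \, d\mu \to 0$, the triangle inequality in $L^1(\mu)$ forces $\sup_n \int_A \|f_n\|_E \, d\mu < \infty$, hence $\int_A \|f\|_E \, d\mu < \infty$ by Fatou. Second, for the reverse direction: starting from Bochner measurability, take $\mu$-simple $f_n \to f$ $\mu$-a.e., perform the truncation above to get $g_n$, verify $g_n$ is simple, verify $g_n \to f$ $\mu$-a.e.\ (on the full-measure set where $f_n \to f$, eventually $\|f_n\|_E \le (1+\varepsilon)\|f\|_E$ at points where $f(x) \ne 0$; the set $\{f = 0\}$ is handled separately since there $\|f_n\|_E \to 0$ so $g_n = 0$ eventually there too, or one simply notes $\|g_n - f\|_E \to 0$ pointwise directly), and apply dominated convergence to conclude $\int_A \|g_n - f\|_E \, d\mu \to 0$. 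Third, for the final inequality: with $g_n$ simple and converging to $f$ in the Bochner sense, $\left\| \int_A g_n \, d\mu \right\|_E \le \int_A \|g_n\|_E \, d\mu$ holds trivially for simple functions by the triangle inequality, and passing to the limit ($\int_A g_n \, d\mu \to \int_A f \, d\mu$ in $E$ by definition, and $\int_A \|g_n\|_E \, d\mu \to \int_A \|f\|_E \, d\mu$ since $\big|\,\|g_n\|_E - \|f\|_E\big| \le \|g_n - f\|_E$) yields $\left\| \int_A f \, d\mu \right\|_E \le \int_A \|f\|_E \, d\mu$.

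\medskip

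\textbf{Main obstacle.} There is no deep obstacle here; the only point requiring care is the truncation argument in the reverse implication — one must choose the truncation level strictly above $\|f(x)\|_E$ (e.g.\ $(1+\varepsilon)\|f(x)\|_E$, or $\|f(x)\|_E + 1$) rather than exactly at $\|f(x)\|_E$, so that $g_n(x) \to f(x)$ is preserved at points where $f(x) \ne 0$, and then confirm the integrable pointwise dominating function for the dominated convergence step. Given the elementary nature of the statement, I would in fact be inclined to simply cite \cite{HP} for the full proof and present only this truncation idea in brief, since Theorem \ref{thm:Hille} (Hille's theorem) is likewise only quoted; but if a self-contained proof is wanted, the three-step outline above is complete.
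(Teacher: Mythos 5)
Your proposal is correct and follows essentially the same route as the paper's own (terser) proof: the paper also handles the forward direction by approximation with simple functions and the reverse direction by truncating the approximating sequence, defining $f_n := \mathbf{1}_{\{\|g_n\|\le 2\|f\|\}}\, g_n$ and invoking scalar dominated convergence. Your truncation level $(1+\varepsilon)\|f\|_E$ is a cosmetic variant of the paper's choice $2\|f\|$; both are strictly above $\|f(x)\|_E$ where $f(x)\neq 0$, which is exactly the point you correctly flag as the one requiring care.
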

\begin{proof}
First suppose that $f$ is $\mu$-Bochner integrable, then the proof of \eqref{equ:l1_bds} follows from approximating $f$ by $\mu$-simple functions. The converse follows by letting $\{g_n\}_{n \in \bN}$ be a sequence of $\mu$ simple functions converging to $f$ and defining
\[
f_n := \mathbf{1}_{\{\|g_n\| \leq 2 \|f\|\}} g_n,
\]
and using the dominated convergence theorem for Lebesgue integrals.
\end{proof}

Finally, since we will use this result in the subsequent sections, we record the following functional analytic property of the operators defined in \S\ref{sub-W-f}, which follows directly from Proposition \ref{prop:ops_bddness_defn}.

\begin{cor}\label{cor:Wn_closed}
The linear operators $\W_n^j : \mathcal{D}(\W_n^j)  \to \mathfrak{h}_{1}^{0}$ are closed operators.
\end{cor}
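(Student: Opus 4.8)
The claim is that the bounded linear operators $\W_n^j : \mathcal{D}(\W_n^j) \to \mathfrak{h}_1^0$ are closed. The plan is to exploit the fact, established in Proposition \ref{prop:ops_bddness_defn}, that $\W_n^j$ is a \emph{bounded} linear operator from $\mathfrak{h}_n^{(n-1)/2 + \alpha}$ to $\mathfrak{h}_1^\alpha$ (in particular, taking $\alpha = 0$, from $\mathfrak{h}_n^{(n-1)/2}$ to $\mathfrak{h}_1^0$), and to combine this with the general principle that a bounded operator defined on a Banach space is automatically closed. The only subtlety is that, as written, $\W_n^j$ on the level of density matrices is viewed as an operator whose domain $\mathcal{D}(\W_n^j)$ sits inside $\mathfrak{h}_1^0$ (or more precisely inside the ambient trace-class space on which $\mathfrak{h}_1^0$ is the relevant norm), so that we must be careful about which graph-closure statement we are making.

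First I would make explicit that $\mathcal{D}(\W_n^j)$ is precisely $\mathfrak{h}_n^{(n-1)/2}$ (equivalently, the closure of the factorized density matrices arising from $\phi \in H^{(n-1)/2}$, by the density argument used in Lemma \ref{lem:b_bounds} and Proposition \ref{prop:ops_bddness_defn}), equipped with the $\mathfrak{h}_n^{(n-1)/2}$-norm, which is a Banach space. Then I would invoke the standard fact: if $T : X \to Y$ is a bounded linear operator and $X$ is complete, then $T$ is closed, i.e.\ its graph $\{(x, Tx) : x \in X\}$ is closed in $X \times Y$. The proof is the routine two-line argument: if $x_m \to x$ in $X$ and $Tx_m \to y$ in $Y$, then by continuity $Tx_m \to Tx$, and by uniqueness of limits $y = Tx$, so $(x,y)$ lies in the graph.

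The one point that deserves a sentence of care — and which I expect to be the only real content beyond citing Proposition \ref{prop:ops_bddness_defn} — is the topology in which closedness is being asserted. In Corollary \ref{cor:Wn_closed} the operator is presented with target $\mathfrak{h}_1^0$ and an unspecified domain $\mathcal{D}(\W_n^j)$, but the application in \S\ref{functional} (Hille's theorem, Theorem \ref{thm:Hille}) requires $\W_n^j$ to be a closed operator between the Banach space $E$ in which the Bochner integral $\int d\mu (|\phi\rangle\langle\phi|)^{\otimes(n+1)}$ is taken and the Banach space $F$ containing its image. Here $E = \mathfrak{h}_n^{(n-1)/2}$ (a bounded subset of which, by Proposition \ref{prop:bdd}, supports the de Finetti measure for sufficiently regular data) and $F = \mathfrak{h}_1^0$. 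So I would state Corollary \ref{cor:Wn_closed} with domain $\mathcal{D}(\W_n^j) = \mathfrak{h}_n^{(n-1)/2}$ and prove closedness of the graph inside $\mathfrak{h}_n^{(n-1)/2} \times \mathfrak{h}_1^0$, which is exactly what the boundedness of $\W_n^j : \mathfrak{h}_n^{(n-1)/2} \to \mathfrak{h}_1^0$ delivers. I do not anticipate any genuine obstacle: this is a soft consequence of Proposition \ref{prop:ops_bddness_defn} together with completeness of the trace-class-type norms $\|\cdot\|_{\mathfrak{h}_m^\alpha}$, and the whole argument is a handful of lines.
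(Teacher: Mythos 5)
Your argument is correct and matches the paper's: the paper offers no separate proof, stating only that the corollary ``follows directly from Proposition~\ref{prop:ops_bddness_defn},'' and the intended reasoning is precisely the standard fact you invoke, that a bounded linear operator from a Banach space has a closed graph. Your observation that one should fix the domain to be the Banach space $\mathfrak{h}_n^{(n-1)/2}$ itself (so that the closedness claim, and later the hypothesis of Hille's theorem in Theorem~\ref{thm:Hille}, are asserted with respect to a consistent topology, rather than with respect to the weaker $\mathfrak{h}_n^0$ topology in which boundedness would not immediately give closedness) is a subtlety the paper leaves implicit, and it is the correct reading.
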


\subsection{Extending the  $\W_n^j$ operators}  \label{sub-W}

Now we are ready to  extend our definition of the operators $\W_n^j$ in \eqref{def:GP-f-w}-\eqref{stat-equ:GP-f-wn}  to an admissible sequence of density matrices. We state the following proposition in more generality than we will need.

\begin{prop} \label{prop:ops_defined}
Let $n \in \bN$, $1 \leq j \leq n$ such that $N \geq j+n-1$. Let $\alpha \geq 0$, and let $(\nu, L^2(\R), \cB)$ be a $\sigma$-finite measure space with $\cB$ the Borel $\sigma$-algebra on $L^2(\R)$. Let $\gamma^{(n)} \in \mathfrak{h}_{n}^{(n-1)/2 }$ be any density matrix given by
\begin{align} \label{equ:density_meas}
\gamma^{(n)}(x,t) = \int d\nu(\phi)  \bigl(|\phi \rangle \langle \phi |\bigr)^{\otimes n},
\end{align}
where $\nu$ is a Borel measure with bounded support in $H^{{(n-1)/2 }}(\R)$ and such that its support is also contained in the unit ball or sphere in $L^2(\R)$. Then the operator $\W_n^j$ is a bounded linear operator $\W_n^j : \mathfrak{h}_{n}^{(n-1)/2 } \to \mathfrak{h}_{1}^0$ which satisfies
\begin{align}\label{equ:w_inside}
\W_n^j \gamma^{(n)}(x,t) = \int d\nu(\phi) \W_n^{j} (|\phi \rangle \langle \phi |)^{\otimes n}.
\end{align}
\end{prop}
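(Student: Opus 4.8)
The plan is to verify the hypotheses of Hille's theorem (Theorem~\ref{thm:Hille}) for the closed operator $\W_n^j$ (closedness is Corollary~\ref{cor:Wn_closed}), applied to the Bochner integral defining $\gamma^{(n)}$ in \eqref{equ:density_meas}. Set $f(\phi) := (|\phi\rangle\langle\phi|)^{\otimes n}$, viewed as a function from $L^2(\R)$ into $\mathfrak{h}_n^0$ (or into $\mathfrak{h}_n^{(n-1)/2}$). First I would check that $f$ is $\nu$-Bochner measurable: since $\phi \mapsto |\phi\rangle\langle\phi|$ is continuous from $L^2$ to the trace class (indeed $\||\phi\rangle\langle\phi| - |\psi\rangle\langle\psi|\|_{\mathrm{Tr}} \lesssim \|\phi-\psi\|_{L^2}(\|\phi\|_{L^2}+\|\psi\|_{L^2})$), and tensor powers preserve continuity on the relevant bounded sets, $f$ is continuous, hence Borel measurable and (the range being separable) strongly $\nu$-measurable. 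Likewise $\phi \mapsto f(\phi)$ is continuous into $\mathfrak{h}_n^{(n-1)/2}$ on the bounded subset $C := \mathrm{supp}\,\nu \subset H^{(n-1)/2}$, so $f$ takes values in $\cD(\W_n^j)$ $\nu$-a.e.\ (in fact everywhere on $C$). Because $\W_n^j$ is bounded from $\mathfrak{h}_n^{(n-1)/2}$ to $\mathfrak{h}_1^0$ (Proposition~\ref{prop:ops_bddness_defn}), the composition $\W_n^j f$ is also continuous, hence $\nu$-Bochner measurable.

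Next I would establish $\nu$-Bochner integrability of both $f$ and $\W_n^j f$ via the Bochner criterion, Theorem~\ref{thm:l1_boch_int}. For $f$: on the support of $\nu$ we have $\|f(\phi)\|_{\mathfrak{h}_n^0} = \||\phi\rangle\langle\phi|^{\otimes n}\|_{\mathrm{Tr}} = \|\phi\|_{L^2}^{2n} \le 1$, and since $\nu$ is a finite Borel measure (being the de Finetti measure, a probability measure, or at worst finite on the bounded support), $\int d\nu\,\|f\|_{\mathfrak{h}_n^0} < \infty$. For $\W_n^j f$: by the operator bound, $\|\W_n^j f(\phi)\|_{\mathfrak{h}_1^0} \le C_n \|f(\phi)\|_{\mathfrak{h}_n^{(n-1)/2}} = C_n \|\phi\|_{H^{(n-1)/2}}^{2n}$, which is bounded on $C = \mathrm{supp}\,\nu$ since $C$ is bounded in $H^{(n-1)/2}$; integrating against the finite measure $\nu$ gives a finite bound. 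Hence both functions are $\nu$-Bochner integrable.

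With all hypotheses of Theorem~\ref{thm:Hille} verified — $f$ Bochner integrable, $f \in \cD(\W_n^j)$ $\nu$-a.e., $\W_n^j f$ Bochner integrable, $\W_n^j$ closed — we conclude $\gamma^{(n)} = \int d\nu(\phi) f(\phi) \in \cD(\W_n^j)$ and
\[
\W_n^j \gamma^{(n)} = \W_n^j \int d\nu(\phi)\, f(\phi) = \int d\nu(\phi)\, \W_n^j f(\phi) = \int d\nu(\phi)\, \W_n^j (|\phi\rangle\langle\phi|)^{\otimes n},
\]
which is \eqref{equ:w_inside}. The boundedness of $\W_n^j : \mathfrak{h}_n^{(n-1)/2} \to \mathfrak{h}_1^0$ is already Proposition~\ref{prop:ops_bddness_defn}, so nothing new is needed there. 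For the extended operators of Remark~\ref{identity_tensor} acting on $\gamma^{(N)}$ with $N \ge j+n-1$, one first uses admissibility to reduce $\gamma^{(N)}$ to $\gamma^{(n)}$ by partial traces (which commute with the de Finetti integral by the same Hille argument applied to the bounded operator $\Tr_{n+1,\dots,N}$), or applies the Hille argument directly with $f(\phi) = (|\phi\rangle\langle\phi|)^{\otimes N}$ and the bounded operator $\W_n^j \otimes I^{\otimes(N-n)}$.

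The only genuinely delicate point is the measurability and continuity bookkeeping: one must be careful that the de Finetti measure $\mu$ from Theorems~\ref{thm-strongDeFinetti}--\ref{thm-weakDeFinetti}, which Proposition~\ref{prop:bdd} shows is supported on a bounded subset $C_s$ of $L^2 \cap H^s$, actually gives a \emph{bounded} subset of $H^{(n-1)/2}$ in the range of $s$ relevant for Theorem~\ref{thm-uniqueness-2}; this is where the regularity threshold on the solution enters, and it is the reason the proposition is stated with the abstract measure $\nu$ on a set bounded in $H^{(n-1)/2}$. Beyond that, the argument is a routine, if careful, application of the Hille and Bochner theorems recorded in \S\ref{functional}.
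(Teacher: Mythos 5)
Your proposal is correct and follows the paper's proof essentially verbatim: the paper likewise verifies the hypotheses of Hille's theorem (Theorem~\ref{thm:Hille}) for the closed operator $\W_n^j$, using the Bochner criterion (Theorem~\ref{thm:l1_boch_int}) together with the boundedness from Proposition~\ref{prop:ops_bddness_defn} and the support bound from Proposition~\ref{prop:bdd} to establish Bochner integrability of both $\phi\mapsto(|\phi\rangle\langle\phi|)^{\otimes n}$ and its image under $\W_n^j$. (As a side note, your exponent $\|\phi\|_{H^{(n-1)/2}}^{2n}$ is the correct one for the trace norm of the factorized rank-one operator; the paper's exponent $n$ in that display is a typo.)
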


\begin{proof}
We will prove this proposition using Hille's Theorem, stated in Theorem \ref{thm:Hille}, for the $\sigma$-finite measure space $(\nu, \cS, \cB)$, where $\cS \subset L^2(\R)$ is the unit ball or sphere.

For any $s \in \R$, and $k \in \bN$ we let 
\[
f_k: H^s \to \mathfrak{h}_k^s, \qquad \phi \longmapsto \prod_{i=1}^k \phi(x_i) \overline{\phi}(x_i').
\]
First we note that $f_k$ is continuous. Indeed, for $\phi, \psi \in H^s$,
\begin{align*}
&\Bigl\| \prod_{i=1}^k \phi(x_i) \overline{\phi}(x_i') - \prod_{i=1}^k \psi(x_i) \overline{\psi}(x_i')\Bigr\|_{\mathfrak{h}_k^s} \\
&= \Tr \biggl| \prod_{i=1}^k (1- \Delta_{x_i})^{s/2} (\phi(x_i) - \psi(x_i))(1- \Delta_{x_i'})^{s/2} \overline{\phi}(x_i') - (1- \Delta_{x_i'})^{s/2} (\overline{\phi}(x_i') - \overline{\psi}(x_i') (1- \Delta_{x_i})^{s/2} \psi(x_i))\biggr| \\
&\lesssim \|\phi - \psi\|^k_{H^s} \Bigl( \|\phi\|_{H^s} + \|\psi\|_{H^s} \Bigr)^k.
\end{align*}
Now fix $k = n$ and $s ={(n-1)/2 }$. We make two observations:
\begin{itemize}
\item The function $f_{n}$ is $\nu$-Bochner measurable, since it is continuous $H^{{(n-1)/2 }} \to \mathfrak{h}_{n}^{{(n-1)/2 }} \subseteq \mathfrak{h}_{n}^{0} $ and $\cB$ is the Borel $\sigma$-algebra on $L^2$. 
\item For any probability measure $\nu$ on the unit sphere in $L^2(\R)$ with bounded support in $H^{{(n-1)/2 }} $ we have
\begin{align*} \label{equ:l1_bds_f}
\int d\nu(\phi) \|f_{n}(\phi)\|_{ \mathfrak{h}_{n}^s} =  \int d\nu(\phi) \|\phi\|_{H^{{(n-1)/2}}}^{n} \leq C.
\end{align*}
where we have used Proposition \ref{prop:bdd} in the last inequality.
\end{itemize}
Consequently, we conclude from Theorem \ref{thm:l1_boch_int} that $f_{n}$ is $\nu$-Bochner integrable. By Corollary \ref{cor:Wn_closed}, we satisfy the closedness hypothesis of Hille's theorem. Furthermore, the function 
\[
f_{n}: H^{(n-1)/2} \to \mathfrak{h}_n^{(n-1)/2} \subseteq \mathfrak{h}_{n}^{0}
\]
 is continuous, hence takes values in $\mathfrak{h}_{n}^{0}$. Now, by Proposition \ref{prop:ops_bddness_defn} and the properties of the functions $f_n$ introduced above, we have
\[
\int d\nu(\phi) \| \W_n^j f_{n}(\phi) \|_{\mathfrak{h}_1^0} \leq C(n) \int d\nu(\phi) \|f_n(\phi)\|_{\mathfrak{h}_n^{(n-1)/2} } \leq C(n) \int d\nu(\phi) \|\phi\|^{n}_{H^{(n-1)/2} } 
\]
and we obtain the desired bound by Proposition \ref{prop:bdd}. Hence $\W_n^1 f_{n}(\phi)$ is $\nu$-Bochner integrable. Thus by Hille's theorem, stated in Theorem \ref{thm:Hille}, we have
\[
\int d\nu(\phi) f_n(\phi) \in \cD(\W_n^j),
\]
and we obtain 
\[
\W_n^j \int d\nu(\phi) f_n(\phi) = \int d\nu(\phi) \W_n^j f_n(\phi)
\]
as desired.
\end{proof}

\begin{rmk} \label{rmk:gen_defn}
Note that the only requirement in the previous proposition was that a given density matrix could be written as in \eqref{equ:density_meas}. In particular, this proposition will apply to any sequence of density matrices which is admissible, or obtained at each time $t$ from a weak-* limit. In those cases the quantum de Finetti theorem yields a $\sigma$-finite measure $\mu$ (which plays the role of the measure $\nu$ in the previous proposition) such that
\[
\gamma^{(k)} = \int d\mu(\phi) \bigl(|\phi\rangle \langle \phi| \bigr)^{\otimes k}
\]
for all $k \in \bN$. In particular, we have constructed a sequence of operators $\{\W_n^j\}$ that, for any such sequence of density matrices, and $k \geq n$ acts by
\begin{align} \label{equ:ops_sequence}
\W_n^j\gamma^{(k)} = \int d\mu(\phi) \W_n^j \bigl(|\phi\rangle \langle \phi| \bigr)^{\otimes k}.
\end{align}
Moreover, Proposition \ref{prop:ops_defined}, and consequently \eqref{equ:ops_sequence}, will hold at any fixed time for any solution to the GP hierarchy $\{\gamma^{(k)}(t)\}$ which arises from initial data satisfying the hypotheses of the quantum de Finetti theorem.
\end{rmk}

\section{Statement of the main results} \label{sec:main}

Now we are ready to precisely state the main results of this paper, specifically Theorem \ref{thm:GP-f-cons-1} and Theorem \ref{main}. In the sequel, analogously to the case of Sobolev space, we define, for convenience, the following topological space
\[
\frH^{\infty} = \bigcap_{\alpha = 0}^\infty \frH^{\alpha},
\]
where $\frH^{\alpha}$ was introduced in \eqref{frakH-def-1}.

Before proceeding, we briefly recall the conserved quantities for the NLS equation \eqref{equ:NLS_intro}. We follow the presentation in Faddeev-Takhtajan \cite{FT} which is based on the inverse scattering method of Zakharov-Shabat \cite{ZS}. We begin with the following definition of the sequence of functions which will be used to construct the conserved integrals  for the NLS.

\begin{defn} \label{def:NLS-w} 
Let $\phi \in C^{\infty}$ be a solution to the cubic NLS \eqref{equ:NLS_intro}. For a non-negative integer $n$ we define the functions $w_{n+1}$ by:  

\begin{align} 
w_1(x) & = \phi(x)  \label{equ:NLS-w1}\\
w_2(x) & = -i \partial_x \phi(x) \\
w_{n+1} (x) 
& = -i \partial_{x} w_{n}(x) + \kappa \barphi(x) \sum_{k=1}^{n-1} w_{k}(x) w_{n-k}(x), \qquad n \geq 2 \label{equ:NLS-wn}
\end{align}
\end{defn} 

\noindent The conserved integrals  for the NLS \eqref{equ:NLS_intro} can be expressed as in the following proposition, (see e.g. \cite[Chapter I, \S4 - 5]{FT}). 
\begin{prop} \label{prop:NLS-cons}
Let $\phi \in C^{\infty}$ be a solution to the cubic NLS \eqref{equ:NLS_intro}.  
Then for each $n \in \bN$, the integral 
\begin{equation} \label{equ:NLS-cons} 
 I_n(\phi) :=\int w_n(x) \barphi(x) \; dx 
 \end{equation} 
is conserved in time. 
\end{prop}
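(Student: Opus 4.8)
The plan is to verify that $\frac{d}{dt} I_n(\phi) = 0$ for a smooth solution $\phi$ of the cubic NLS \eqref{equ:NLS_intro}, proceeding by a direct computation organized around the recursion \eqref{equ:NLS-w1}--\eqref{equ:NLS-wn}. First I would establish, by induction on $n$, a useful ``trace'' identity expressing the time derivative $\partial_t w_n$ in terms of spatial derivatives of the $w_k$'s and of $\phi$, $\barphi$. The key input here is that $\phi$ solves $i\phi_t = -\partial_{xx}\phi + 2\kappa|\phi|^2\phi$, so $\partial_t\phi = i\partial_{xx}\phi - 2i\kappa|\phi|^2\phi$, and similarly $\partial_t\barphi = -i\partial_{xx}\barphi + 2i\kappa|\phi|^2\barphi$. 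Plugging these into $\partial_t w_{n+1} = -i\partial_x\partial_t w_n + \kappa\,\partial_t\barphi\sum_{k=1}^{n-1}w_k w_{n-k} + \kappa\barphi\sum_{k=1}^{n-1}(\partial_t w_k\, w_{n-k} + w_k\,\partial_t w_{n-k})$, one should be able to massage the result, using the inductive hypothesis, into the form $\partial_t w_n = \partial_x(\text{something}) + (\text{terms that will cancel against } \phi \text{ or } \barphi \text{ factors})$.

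The cleaner route, which I would actually pursue, is the classical generating-function argument of Zakharov--Shabat as presented in Faddeev--Takhtajan \cite{FT}: package the $w_n$ into a formal series and recognize that the recursion \eqref{equ:NLS-wn} is precisely the Riccati-type recursion satisfied by the coefficients of $\log$ of the relevant Jost solution / transition coefficient $a(\lambda)$. Concretely, one shows that $\sum_n w_n(x)\lambda^{-n}$ solves a Riccati equation derived from the Zakharov--Shabat spectral problem, and that $I_n = \int w_n\barphi\,dx$ are (up to normalization) the Taylor coefficients at $\lambda = \infty$ of $\log a(\lambda)$, which is time-independent because $a(\lambda)$ evolves trivially under the NLS flow. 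I would cite \cite[Chapter I, \S4--5]{FT} for the detailed verification that the $w_n$ defined by \eqref{equ:NLS-w1}--\eqref{equ:NLS-wn} match the coefficients arising in that construction, and for the conservation of $a(\lambda)$.

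Alternatively, to keep the proof self-contained, I would carry out the direct computation: show by induction that there exist functions $\rho_n, \sigma_n$ (polynomial in $\phi,\barphi$ and their $x$-derivatives) with $\partial_t(w_n\barphi) = \partial_x \sigma_n + \barphi\cdot(\text{term proportional to the NLS equation, hence }0)$, so that $\frac{d}{dt}I_n = \int \partial_t(w_n\barphi)\,dx = \int \partial_x\sigma_n\,dx = 0$ by the fundamental theorem of calculus and the rapid decay of $\phi$ (here one uses $\phi \in C^\infty$ with Schwartz-type decay, which I would note is the standing assumption). The base cases $n=1,2,3$ recover mass, momentum, and energy and can be checked by hand.

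The main obstacle is the bookkeeping in the inductive step: one must track how the cubic term $2\kappa|\phi|^2\phi$ from the NLS equation interacts with the quadratic convolution sum $\sum_{k=1}^{n-1}w_k w_{n-k}$ in \eqref{equ:NLS-wn}, and verify that all non-total-derivative contributions cancel. This cancellation is not obvious term-by-term; it relies on the precise combinatorial structure of the recursion (the same structure that, as the authors emphasize, forces the complicated form of the $\W_n^j$). For this reason I would lean on \cite{FT} for the heavy lifting, presenting the generating-function identity and invoking the standard inverse-scattering conservation argument rather than reproving it from scratch.
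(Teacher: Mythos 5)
Your proposal is correct and aligns with the paper, which gives no proof of Proposition~\ref{prop:NLS-cons} but simply cites \cite[Chapter~I, \S 4--5]{FT}; you end up leaning on the same reference and correctly summarize the Zakharov--Shabat generating-function/Riccati argument with the time-invariance of the transition coefficient $a(\lambda)$. The additional sketches you give (direct induction, writing $\partial_t(w_n\barphi)$ as a total $x$-derivative) are standard alternative routes to the same classical fact and are consistent with the recursion \eqref{equ:NLS-w1}--\eqref{equ:NLS-wn}.
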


%

The crucial property of the operators  $\{\W_{n}^j \}_{n}$ defined in \eqref{stat-equ:GP-f-w1}--\eqref{stat-equ:GP-f-wn} , whose proof we postpone until \S\ref{sec-w-fact}, 
expresses the link between these operators and  the functions $\{w_{n}\}_{n}$, and can be written as follows: 
\begin{equation} \label{intro-equ:GP-f-Ww} 
	\W_{n}^j \Biggl(  \prod_{l=j}^{j+n-1} \phi(t,x_\ell) \barphi(t,x'_\ell) \Biggr) 
	 =  w_n(x_j) \barphi(t,x'_j),
 \end{equation} 
for every $n = 0, 1, 2, ...$, $1 \leq j \leq n+1$ and $\phi(t,x)$ a solution to the cubic (de)focusing NLS in a certain sufficiently regular Sobolev space.   
In particular, 
\begin{equation} \label{intro-equ:GP-f-Ww-1} 
	\W_{n}^1 \Biggl(  \prod_{l=1}^{n} \phi(t,x_\ell) \barphi(t,x'_\ell) \Biggr) 
	 =  w_n(x_1) \barphi(t,x'_1), 
\end{equation} 
which together with Proposition \ref{prop:NLS-cons} will imply the following result:

\begin{thm} \label{thm:GP-f-cons-1}
For  $\phi(t,x) \in L^{\infty}_{t \in [0,\infty)}C^{\infty}(\mathbb R)$
a solution to the cubic (de)focusing NLS, 
let 
\[
\biggl\{ \; \prod_{l=j}^{j+n-1} \phi(t,x_\ell) \barphi(t,x'_\ell)  \biggr\}_{n=1}^\infty
\]
be a sequence of factorized solutions  to the GP hierarchy in  $L^{\infty}_{t \in [0,\infty)} \mathfrak{H}^{\infty}$. 
Then for each $n \in \bN$, the quantity
\begin{equation} \label{equ:GP-f-cons0} 
 \Tr \W_{n}^1 \Biggl(  \prod_{l=1}^{n} \phi(t,x_\ell) \barphi(t,x'_\ell) \Biggr) 
\end{equation} 
is conserved in time, that is for any $t \in \R$,
\[
 \Tr \W_{n}^1 \Biggl(  \prod_{l=1}^{n} \phi(t,x_\ell) \barphi(t,x'_\ell) \Biggr)  =  \Tr \W_{n}^1 \Biggl(  \prod_{l=1}^{n} \phi(0,x_\ell) \barphi(0,x'_\ell) \Biggr) .
\]
\end{thm}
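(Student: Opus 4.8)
The plan is to obtain the conservation of $\Tr\W_{n}^1\bigl(\prod_{l=1}^{n}\phi(t,x_\ell)\barphi(t,x'_\ell)\bigr)$ as an immediate consequence of the conservation of the classical NLS integrals $I_n(\phi)$ from Proposition \ref{prop:NLS-cons}, the bridge being the structural identity \eqref{intro-equ:GP-f-Ww-1}, which identifies $\W_n^1$ applied to a factorized state with the kernel operator built from the NLS density $w_n(t,x_1)\barphi(t,x'_1)$. So the whole argument is: (i) apply \eqref{intro-equ:GP-f-Ww-1}; (ii) take the trace and recognize $I_n$; (iii) invoke Proposition \ref{prop:NLS-cons}.

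In more detail, first I would apply \eqref{intro-equ:GP-f-Ww-1} with $j=1$. Since $\phi(t,\cdot)\in C^\infty(\R)$ solves the cubic (de)focusing NLS and the factorized sequence lies in $L^\infty_{t}\mathfrak{H}^{\infty}$, we have $\phi(t,\cdot)\in H^{(n-1)/2+\alpha}$ for every $\alpha\ge 0$, and (normalizing via conservation of mass for NLS) $\|\phi(t,\cdot)\|_{L^2}=1$; hence Proposition \ref{prop:ops_bddness_defn} guarantees that $\W_n^1$ is a well-defined bounded operator on $\prod_{l=1}^{n}\phi(t,x_\ell)\barphi(t,x'_\ell)$, and \eqref{intro-equ:GP-f-Ww-1} identifies its image as the operator with integral kernel $w_n(t,x_1)\barphi(t,x'_1)$, i.e.\ the rank-one operator $|w_n(t,\cdot)\rangle\langle\phi(t,\cdot)|$. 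The $\mathfrak{H}^{\infty}$-regularity of $\phi$ propagates to $w_n(t,\cdot)$ through the recursion \eqref{equ:NLS-wn} (each step costs one derivative or a pointwise product of rapidly decaying smooth functions), so $w_n(t,\cdot)\barphi(t,\cdot)\in\mathfrak{h}_1^0$, making this operator trace class.

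Next I would compute the trace of a kernel operator as the integral of its diagonal:
\[
\Tr\W_n^1\Biggl(\prod_{l=1}^{n}\phi(t,x_\ell)\barphi(t,x'_\ell)\Biggr)=\int w_n(t,x)\,\barphi(t,x)\,dx=I_n\bigl(\phi(t,\cdot)\bigr).
\]
By Proposition \ref{prop:NLS-cons}, $I_n(\phi(t,\cdot))=I_n(\phi(0,\cdot))$ for all $t\in\R$, and running the same identity at time $0$ gives $I_n(\phi(0,\cdot))=\Tr\W_n^1\bigl(\prod_{l=1}^{n}\phi(0,x_\ell)\barphi(0,x'_\ell)\bigr)$, which is exactly the asserted equality.

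The one genuinely substantial input here is the identity \eqref{intro-equ:GP-f-Ww} itself, whose proof is deferred to \S\ref{sec-w-fact}; proving it requires a complete induction on $n$ in which the recursive definition \eqref{stat-equ:GP-f-wn} of $\W_{n+1}^j$ is matched term by term against the NLS recursion \eqref{equ:NLS-wn} for $w_{n+1}$ — the $-i\partial_{x_j}\W_n^j\Tr_{j+n}$ term producing $-i\partial_x w_n$, and the $\sum_k B_{j,j+k}\W_k^j\otimes\W_{n-k}^{j+k}\Tr_{j+n}$ term producing $\kappa\barphi\sum_k w_k w_{n-k}$ once the partial trace collapses the $(j{+}n)$-th double factor. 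Granting that identity, the present theorem is essentially bookkeeping: reading off the rank-one structure, justifying that the trace equals the diagonal integral using the $\mathfrak{H}^{\infty}$ hypothesis for trace-class membership, and quoting Proposition \ref{prop:NLS-cons}. Thus the main obstacle is not in Theorem \ref{thm:GP-f-cons-1} per se but in the deferred identity; within the present statement the only care needed is tracking the $L^2$-normalization and the Sobolev regularity so that Definition \ref{def:GP-f-w} and Proposition \ref{prop:ops_bddness_defn} genuinely apply at every time $t$.
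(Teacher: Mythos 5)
Your proposal is correct and mirrors the paper's proof exactly: the paper first establishes the identity $\W_{n}^j\bigl(\prod_{l=j}^{j+n-1}\phi\barphi\bigr)=w_n(x_j)\barphi(x'_j)$ by complete induction (Proposition \ref{prop:GP-f-Ww}), then takes the trace to recognize $I_n(\phi)$ and invokes conservation for the NLS (Proposition \ref{prop:GP-f-cons}), with the theorem following by specializing to $j=1$. Your outline of the induction — matching $-i\partial_{x_j}\W_n^j\Tr_{j+n}$ to $-i\partial_x w_n$ and the $B_{j,j+k}$ sum to $\kappa\barphi\sum_k w_kw_{n-k}$ — is also precisely how the paper carries it out.
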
 

We will prove this theorem by explicitly identifying the quantity in \eqref{equ:GP-f-cons0}  as the $n$-th conserved quantity for the NLS, $I_n(\phi)$ in \eqref{equ:NLS-cons}. We prove these claims, and consequently Theorem \ref{thm:GP-f-cons-1} in Proposition \ref{prop:GP-f-cons}.

\medskip
Now we are ready to state our main theorem.

\begin{thm}\label{main} 
If $(\gamma^{(k)}(t))_{k\in\bN}$ is a mild solution in $L^\infty_{t\in[0,\infty)}\frH^{\infty}$ to the (de)focusing cubic GP hierarchy in $\R$ which is admissible, then for each $n \in \bN$, $1 \leq j \leq n$ and $k \geq j+n-1$ the quantity 
\begin{equation} \label{equ:GP-f-cons2} 
 \Tr \W_{n}^j \gamma^{(k)}(t)
\end{equation} 
is conserved in time, that is, for any $t \in \R$,
\[
 \Tr \W_{n}^j \gamma^{(k)}(t) =  \Tr \W_{n}^j \gamma^{(k)}(0).
\]
Moreover, given the de Finetti measure $\mu$ for the sequence $(\gamma^{(k)}(0))_{k \in \bN}$ we obtain that 
\begin{align}\label{equ:cons_avg}
 \Tr \W_{n}^j \gamma^{(k)}(t) = \int d\mu\, I_n(\phi)
\end{align}
where $I_n(\phi)$ is the conservation law for the NLS given by \eqref{equ:NLS-cons}.
\end{thm}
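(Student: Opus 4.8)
The plan is to reduce the statement to the factorized case of \S\ref{sec-w-fact} via the Hong--Taliaferro--Xie representation formula, and then transport the NLS conservation laws through the de Finetti average. Three earlier facts do essentially all of the work: the uniqueness and representation theorem (Theorem~\ref{thm-uniqueness-2}); the fact that $\W_n^j$ commutes with de Finetti integration (Proposition~\ref{prop:ops_defined} and Remark~\ref{rmk:gen_defn}); and the intertwining identity \eqref{intro-equ:GP-f-Ww} together with the conservation of the $I_n$ (Proposition~\ref{prop:NLS-cons}).

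First I would pin down the de Finetti measure. Since $(\gamma^{(k)}(0))_{k\in\bN}\in\frH^\infty$ is admissible, Theorem~\ref{thm-strongDeFinetti} produces a Borel probability measure $\mu$ on the unit sphere of $L^2(\R)$ with $\gamma^{(k)}(0)=\int d\mu(\phi_0)\,(|\phi_0\rangle\langle\phi_0|)^{\otimes k}$; because the data lies in every $\frH^s$, Proposition~\ref{prop:bdd} upgrades this to the statement that $\mu$ is supported on a bounded subset of $L^2\cap H^s$ for each $s\ge0$. Next, all hypotheses of Theorem~\ref{thm-uniqueness-2} are satisfied (the solution is admissible and lies in $L^\infty_t\frH^s$ for every $s>0$), so the solution is unique and
\[
\gamma^{(k)}(t)=\int d\mu(\phi_0)\,\bigl(|\phi(t)\rangle\langle\phi(t)|\bigr)^{\otimes k},
\]
where $\phi(t)$ is the cubic (de)focusing NLS solution with data $\phi_0$, which by the cited theorem exists on $[0,\infty)$ for $\mu$-a.e.\ $\phi_0$. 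Mass conservation keeps $\phi(t)$ on the unit sphere, and applying Proposition~\ref{prop:bdd} at time $t$ to $\gamma^{(k)}(t)\in\frH^\infty$ shows the de Finetti measure of $(\gamma^{(k)}(t))_k$ is again supported on a bounded subset of every $H^s$; in particular $\phi(t)$ is smooth for $\mu$-a.e.\ $\phi_0$.

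Now fix $n$, $1\le j\le n$, and $k\ge j+n-1$. By Proposition~\ref{prop:ops_defined} and Remark~\ref{rmk:gen_defn}, $\W_n^j$ may be brought inside the de Finetti average,
\[
\W_n^j\gamma^{(k)}(t)=\int d\mu(\phi_0)\,\W_n^j\bigl(|\phi(t)\rangle\langle\phi(t)|\bigr)^{\otimes k}\qquad\text{in }\mathfrak{h}_{k-n+1}^0,
\]
and since $\Tr$ is a bounded linear functional on $\mathfrak{h}_{k-n+1}^0$ it likewise passes through the Bochner integral. For each such smooth $\phi(t)$, the intertwining identity \eqref{intro-equ:GP-f-Ww}, extended by the identity on the $k-n$ inert factors as in Remark~\ref{identity_tensor}, together with $\|\phi(t)\|_{L^2}=1$, yields
\[
\Tr\W_n^j\bigl(|\phi(t)\rangle\langle\phi(t)|\bigr)^{\otimes k}=\int w_n(t,x)\,\barphi(t,x)\,dx=I_n(\phi(t)).
\]
By Proposition~\ref{prop:NLS-cons}, $I_n(\phi(t))=I_n(\phi_0)$ for $\mu$-a.e.\ $\phi_0$, so
\[
\Tr\W_n^j\gamma^{(k)}(t)=\int d\mu(\phi_0)\,I_n(\phi_0),
\]
which is independent of $t$. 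Taking $t=0$ identifies the constant as $\Tr\W_n^j\gamma^{(k)}(0)$, and the displayed identity is exactly \eqref{equ:cons_avg}; time-reversibility of the GP hierarchy and of the NLS extends the conclusion to $t<0$ whenever the solution is defined there.

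I do not expect the obstacle to lie in this assembly. The substantive content is in the inputs: proving the intertwining identity \eqref{intro-equ:GP-f-Ww}, which demands carefully matching the recursion \eqref{stat-equ:GP-f-wn} for $\W_n^j$ with the recursion \eqref{equ:NLS-wn} for $w_n$ (including the bookkeeping of the collision operators $B_{i,j}$), and establishing the mapping and closedness properties of $\W_n^j$ in Proposition~\ref{prop:ops_bddness_defn} and Corollary~\ref{cor:Wn_closed}. Within the present proof the single delicate point is the legitimacy of moving the differential-type operator $\W_n^j$ past the de Finetti integral; this is exactly what Hille's theorem (Theorem~\ref{thm:Hille}) delivers, and it applies uniformly along the flow precisely because the de Finetti measure of a solution in $\frH^\infty$ is supported on a bounded subset of every Sobolev space $H^s$ (Proposition~\ref{prop:bdd}).
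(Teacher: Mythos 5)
Your proposal is correct and follows essentially the same route as the paper's proof: invoke the Hong--Taliaferro--Xie representation formula (Theorem~\ref{thm-uniqueness-2}) to write $\gamma^{(k)}(t)$ as a de Finetti average of factorized NLS states, commute $\W_n^j$ and then $\Tr$ through the Bochner integral, reduce to $\int d\mu(\phi_0)\,I_n(\phi(t))$, and use conservation of $I_n$ for NLS to conclude. The one small departure is in how you pass the trace through the integral: you observe that $\Tr$ is a bounded linear functional on $\mathfrak{h}_{k-n+1}^0$ and so commutes with Bochner integration automatically, whereas the paper invokes Hille's theorem a second time and verifies Lebesgue integrability of $\phi_0\mapsto I_n(\phi(t))$ via Lemma~\ref{lem:w_bds}; both routes are valid, yours being marginally more direct. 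You are also more explicit than the paper about applying Proposition~\ref{prop:bdd} at time $t$ (not just at $t=0$) to justify that the time-$t$ de Finetti measure has bounded support in every $H^s$ --- which is exactly what is needed to run Proposition~\ref{prop:ops_defined} along the flow --- a point the paper leaves implicit.
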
 

\noindent We will prove this theorem in \S\ref{sec:proof_of_main} after establishing Theorem \ref{thm:GP-f-cons-1} in \S\ref{sec-w-fact}.

\begin{rmk}
We note that if $\{ \gamma^{(k)} \}_{k =1}^\infty$ is any sufficiently regular admissible sequence of density matrices, then for each $n \in \bN$, $1 \leq j \leq n$ and any $K \geq k \geq j + n-1$, we have
\[
\Tr \W_{n}^j \gamma^{(k)} =  \Tr \W_{n}^j \gamma^{(K)},
\]
where as usual, we understand the operator $\W_{n}^j$ applied to $\gamma^{(K)}$ as an extension of the usual operator $\W_{n}^j$ by tensoring with the identity as in Remark \ref{identity_tensor}.
\end{rmk}

\begin{rmk}\label{rmk:natural}
As mentioned in the introduction, we view our main result in the spirit of \cite{CHPS-sc} which demonstrates that solutions of the GP hierarchy which scatter are suitable averages of the scattering solutions for the NLS. Indeed, Theorem \ref{main}, and in particular \eqref{equ:cons_avg}, demonstrates that conserved quantities for the cubic GP hierarchy can be obtained as an appropriate average of the conserved quantities for the NLS.
\end{rmk}

\begin{rmk} \label{rmk:gen_cons}
The operator $\W_3^j$ corresponds to the conserved quantity of Chen, the third author and Tzirakis \cite{CPT}, thus we recover the conserved quantity which corresponds to the Hamiltonian for the NLS.
We also note that operators of the form $\W_3^1 \otimes \W_3^4 \otimes \W_3^7 \otimes \ldots \otimes \W_3^{1 + k\cdot 3}$ recover the higher conservation laws of \cite{CP14}. Indeed, one has for instance that
\begin{align} \label{equ:higher_cons_law}
\Tr \left((\W_3^1 \otimes \W_3^4 ) \prod _{i=1}^6 \phi(x_i) \phi(x_i')\right) = \Bigl( I_1(\phi) \Bigr)^2,
\end{align}
and the conservation for the equivalent expression  for general density matrices follows from the arguments we will use to prove Theorem \ref{main}. We will return to this point in \S\ref{sec:cons_laws_gp}. Finally, we observe that any higher order operator with the form
\[
\W_{n_1}^1 \otimes \W_{n_2}^{1 + n_1} \otimes \W_{n_3}^{1 + n_1 + n_3}  \otimes \ldots  \otimes \W_{n_k}^{1 + n_1+ \ldots+ n_{k-1}}
\] 
will also generate conservation laws for the GP hierarchy, although as in \eqref{equ:higher_cons_law} such operators will yield polynomials in the conserved quantities associated to the indices $n_i$. See Proposition \ref{prop:higher_cons} for more details.
\end{rmk}


\section{Conservation laws for factorized solutions of the GP hierarchy} \label{sec-w-fact}

In this section we prove  Theorem \ref{thm:GP-f-cons-1} which identifies infinitely many conserved quantities for 
factorized solutions to the GP hierarchy \eqref{equ:gp_intro}.  We start by specifying the crucial link between the operators $\{\W_{n}^j \}_{n}$ defined in \eqref{stat-equ:GP-f-w1}--\eqref{stat-equ:GP-f-wn}  and the functions $\{w_{n} \}_{n}$ from the conserved quantities for the NLS. We establish this link in the following proposition.

\begin{prop} \label{prop:GP-f-Ww} 
Suppose $\phi(t,x) \in L^{\infty}_{t \in [0,\infty)}C^{\infty}(\mathbb R)$ is a solution to the cubic (de)focusing NLS \eqref{equ:NLS_intro}.  
Then the operators $\{\W_{n}^j \}_{n}$ satisfy the following identity: 
\begin{equation} \label{equ:GP-f-Ww} 
\W_{n}^j \Biggl(  \prod_{l=j}^{j+n-1} \phi(t,x_\ell) \barphi(t,x'_\ell) \Biggr) 
 =  w_n(x_j) \barphi(t,x'_j),
 \end{equation} 
for every $n \in \bN$ and $1 \leq j \leq n+1$. 
\end{prop}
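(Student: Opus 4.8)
The plan is to prove the identity \eqref{equ:GP-f-Ww} by strong induction on $n$, the inductive statement at level $n$ asserting \eqref{equ:GP-f-Ww} for that $n$ and every base point $j$ in the relevant range; since the recursion \eqref{stat-equ:GP-f-wn} expresses $\W_{n+1}^j$ through operators $\W_n^j$, $\W_k^j$, and $\W_{n-k}^{j+k}$ of strictly smaller subscript, the hypothesis is available for every operator on its right-hand side. It is worth noting at the outset that \eqref{equ:GP-f-Ww} is a purely algebraic identity between two polynomial expressions in $\phi(t,\cdot)$ and its $x$-derivatives — the left side generated by \eqref{stat-equ:GP-f-w1}--\eqref{stat-equ:GP-f-wn} and the right side by \eqref{equ:NLS-w1}--\eqref{equ:NLS-wn} — so that neither the NLS equation \eqref{equ:NLS_intro} nor the GP hierarchy enters; it therefore suffices to verify it for a fixed smooth $\phi$ with $\|\phi\|_{L^2}=1$, the time variable being inert and henceforth suppressed.

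The base case $n=1$ is immediate from \eqref{stat-equ:GP-f-w1}: $\W_1^j(\phi(x_j)\barphi(x'_j)) = \phi(x_j)\barphi(x'_j) = w_1(x_j)\barphi(x'_j)$. For the inductive step, assume the statement for every subscript $\le n$ and apply \eqref{stat-equ:GP-f-wn} to $\W_{n+1}^j\bigl(\prod_{l=j}^{j+n}\phi(x_\ell)\barphi(x'_\ell)\bigr)$, treating the two groups of terms in turn. In the first term, the partial trace $\Tr_{j+n}$ together with $\|\phi\|_{L^2}=1$ deletes the double-factor of index $j+n$, leaving $\prod_{l=j}^{j+n-1}\phi(x_\ell)\barphi(x'_\ell)$; the inductive hypothesis turns $\W_n^j$ of this into $w_n(x_j)\barphi(x'_j)$, and since $x_j$ and $x_j'$ are independent variables, $-i\partial_{x_j}$ acts only on the first factor, contributing $-i(\partial_x w_n)(x_j)\,\barphi(x'_j)$. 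In the $k$-th summand of the second term, the inductive hypothesis applied to $\W_k^j$ on the double-factors $j,\dots,j+k-1$ and to $\W_{n-k}^{j+k}$ on the double-factors $j+k,\dots,j+n-1$ (the index-$(j+n)$ double-factor again removed by $\Tr_{j+n}$ and the normalization) produces $w_k(x_j)\,\barphi(x'_j)\cdot w_{n-k}(x_{j+k})\,\barphi(x'_{j+k})$; computing the collision operator $B_{j,j+k}$ directly from \eqref{equ:collision_intro}, it contracts the $(j+k)$-th double-factor into the $j$-th one by setting $x_{j+k}=x_{j+k}'=x_j$, which gives $w_k(x_j)\,\barphi(x'_j)\cdot w_{n-k}(x_j)\,\barphi(x_j) = \barphi(x_j)\,w_k(x_j)\,w_{n-k}(x_j)\,\barphi(x'_j)$. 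Summing over $1\le k\le n-1$ and adding the first term yields
\[
\W_{n+1}^j\Bigl(\prod_{l=j}^{j+n}\phi(x_\ell)\barphi(x'_\ell)\Bigr) = \Bigl[\,{-i}(\partial_x w_n)(x_j) + \kappa\,\barphi(x_j)\sum_{k=1}^{n-1} w_k(x_j)\,w_{n-k}(x_j)\,\Bigr]\barphi(x'_j) = w_{n+1}(x_j)\,\barphi(x'_j),
\]
by the recursion \eqref{equ:NLS-wn} (when $n=1$ the sum is empty and the bracket reduces to $-i(\partial_x w_1)(x_j)$, which is $w_2$); this closes the induction.

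The routine parts are the index bookkeeping and the check that each iterated trace and derivative lands in the appropriate function space; the latter is already supplied by Proposition \ref{prop:ops_bddness_defn} and the identity-tensoring convention of Remark \ref{identity_tensor}. The one computation that must be carried out carefully — and the real reason the two recursions line up term by term — is the evaluation of $B_{j,j+k}$ on the output of $\W_k^j\otimes\W_{n-k}^{j+k}$: because the $(j+k)$-th slot carries $\barphi(x'_{j+k})$ in its bra position, the contraction $x_{j+k}=x_{j+k}'=x_j$ produces the factor $\barphi(x_j)$ \emph{with no prime} multiplying $w_k(x_j)w_{n-k}(x_j)$, which is exactly the nonlinearity $\kappa\,\barphi\sum_k w_k w_{n-k}$ of \eqref{equ:NLS-wn}; had $B$ acted on its two arguments separately the two recursions would not match. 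The only organizational subtlety is to phrase the induction so that, at each level, the statement covers all base points reached by the recursion, so that the shifted operator $\W_{n-k}^{j+k}$ lies within its scope.
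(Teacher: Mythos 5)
Your proof is correct and follows essentially the same route as the paper's: strong induction on $n$ with the inductive statement quantified over all admissible base points $j$, unpacking the recursion \eqref{stat-equ:GP-f-wn}, applying $\Tr_{j+n}$ and the inductive hypothesis to each group of factors, evaluating $B_{j,j+k}$ explicitly, and matching against the recursion \eqref{equ:NLS-wn}. Your remarks that the identity is purely algebraic in $\phi$ and that the unprimed $\barphi(x_j)$ produced by the contraction is what makes the two recursions line up are the right points to emphasize, but the underlying computation coincides with the paper's.
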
 

\begin{proof} 
We prove the claim via mathematical induction in $n$.
First, we note that in the case when $n=0$ the identity \eqref{equ:GP-f-Ww} is satisfied thanks to the definition \eqref{stat-equ:GP-f-w1}
of  $\W^1_1$ and the definition  \eqref{equ:NLS-w1} of $w_1$, which immediately give that
\begin{equation} \label{equ:GP-f-Ww1}
\W_{1}^1 \Biggl( \phi(t,x_1) \barphi(t,x'_1) \Biggr) 
 =  \phi(t,x_1) \barphi(t,x'_1) 
=  w_1(x_1) \barphi(t,x'_1). 
\end{equation}

Now suppose that \eqref{equ:GP-f-Ww} is satisfied for all integers $n$, $1 \leq n \leq m$ and all integers $j$ such that $1 \leq j \leq n+1$. Then we prove the identity for $n=m+1$ as follows. 
Fix an integer $1 \leq j \leq m+2$. By the definition  \eqref{stat-equ:GP-f-wn} of $\W^j_{m+1}$ and the definition of the partial trace we have: 

\begin{align} 
\W_{m+1}^j \Biggl(  \prod_{l=j}^{j+m} \phi(t,x_\ell) \barphi(t,x'_\ell) \Biggr) 
& = -i \partial_{x_j} \W_{m}^{j} \Tr_{j+m} \Biggl(  \prod_{l=j}^{j+m} \phi(t,x_\ell) \barphi(t,x'_\ell) \Biggr) \nonumber \\
& + \; \; \; \; \sum_{k=1}^{m-1} B_{j,j+k} 
\W_{k}^{j} \otimes \W_{m-k}^{j+k} \Tr_{j+m}  \Biggl(  \prod_{l=j}^{j+m} \phi(t,x_\ell) \barphi(t,x'_\ell) \Biggr)  \nonumber\\
& = -i \partial_{x_j} \W_{m}^{j} \Biggl(  \prod_{l=j}^{j+m-1} \phi(t,x_\ell) \barphi(t,x'_\ell) \Biggr) \nonumber \\
& + \; \; \; \; \sum_{k=1}^{m-1} B_{j,j+k} 
{\color{red}{
\W_{k}^{j} 
}}
\otimes 
{\color{blue}{
\W_{m-k}^{j+k} 
}}
\Biggl(  
{\color{red}{
\prod_{l=j}^{j+k-1} \phi(t,x_\ell) \barphi(t,x'_\ell)   
}}
{\color{blue}{
\prod_{l=j+k}^{j+m-1} \phi(t,x_\ell) \barphi(t,x'_\ell) 
}}
\Biggr)  .
\nonumber
\end{align}
Using the inductive hypothesis, we then obtain
\begin{align}
 &-i \partial_{x_j}  w_m(x_j) \barphi(t,x'_j) + \sum_{k=1}^{m-1} B_{j,j+k} 
\Biggl[
w_k(x_j)\barphi(t,x'_j) \; w_{m-k}(x_{j+k}) \barphi(t,x'_{j+k}) 
\Biggr]
\label{equ:GP-f-Ww-mi-a}\\
& = -i \partial_{x_j}  w_m(x_j) \barphi(t,x'_j) 
+ \sum_{k=1}^{m-1}
w_k(x_j)\barphi(t,x'_j) \; w_{m-k}(x_j) \barphi(t,x_j) \label{equ:GP-f-Ww-mi-B}\\
& = \Biggl( -i \partial_{x_j}  w_m(x_j) 
+ \sum_{k=1}^{m-1}
w_k(x_j) w_{m-k}(x_j) \barphi(t,x_j) \Biggr) \barphi(t,x'_j) \nonumber \\
& = w_{m+1}(x_j) \barphi(t,x'_j), \label{equ:GP-f-Ww-ind}
\end{align}
where \eqref{equ:GP-f-Ww-mi-a} follows from the assumption that the identity \eqref{equ:GP-f-Ww} is satisfied for all integers $n$, $1 \leq n \leq m$, 
 \eqref{equ:GP-f-Ww-mi-B} follows from the definition of the contraction operator $B_{j,j+k}$ and the last line \eqref{equ:GP-f-Ww-ind}
follows from the recursive relation \eqref{equ:NLS-wn} satisfied by the $w_m$. 
 \end{proof}

Now we are ready to identify the conserved quantities for factorized solutions to the GP. 

\begin{prop} \label{prop:GP-f-cons}
Let $\phi(t,x) \in L^{\infty}_{t \in [0,\infty)}C^{\infty}(\mathbb R)$ be a solution to the cubic (de)focusing NLS \eqref{equ:NLS_intro}, and let 
\[
\biggl\{ \; \prod_{l=j}^{j+n-1} \phi(t,x_\ell) \barphi(t,x'_\ell)  \biggr\}_{n=1}^\infty
\]
be a sequence of factorized solutions to the GP hierarchy in  $L^{\infty}_{t \in [0,\infty)} \mathfrak{H}_{n}^{(n-1)/2 }$. 
Then for each $n \in \bN$, we have that
\begin{equation} \label{equ:GP-f-cons3} 
 \Tr \W_{n}^j \Biggl(  \prod_{l=j}^{j+n-1} \phi(t,x_\ell) \barphi(t,x'_\ell) \Biggr) = I_n(\phi),
\end{equation} 
where $ I_n(\phi)$ was defined in \eqref{equ:NLS-cons}.
In particular, this quantity is conserved in time, that is
\[
 \Tr \W_{n}^j \Biggl(  \prod_{l=j}^{j+n-1} \phi(t,x_\ell) \barphi(t,x'_\ell) \Biggr) =  \Tr \W_{n}^j \Biggl(  \prod_{l=j}^{j+n-1} \phi(0,x_\ell) \barphi(0,x'_\ell) \Biggr) .
\]
\end{prop}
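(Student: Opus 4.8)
The plan is to read off Proposition \ref{prop:GP-f-cons} almost directly from the key identity established in Proposition \ref{prop:GP-f-Ww}, combined with the conservation of the classical NLS integrals in Proposition \ref{prop:NLS-cons}. First I would apply Proposition \ref{prop:GP-f-Ww} to the given solution $\phi(t,x) \in L^{\infty}_{t \in [0,\infty)}C^{\infty}(\mathbb R)$, which asserts that for every $n \in \bN$ and $1 \le j \le n+1$,
\[
\W_n^j\Biggl(\prod_{l=j}^{j+n-1}\phi(t,x_\ell)\barphi(t,x'_\ell)\Biggr) = w_n(x_j)\,\barphi(t,x'_j).
\]
Since the factorized sequence is assumed to lie in the appropriate space $L^\infty_{t}\mathfrak{H}_n^{(n-1)/2}$, Proposition \ref{prop:ops_bddness_defn} guarantees that the left-hand side — hence the right-hand side — is an element of $\mathfrak{h}_1^0$, i.e. a trace-class operator on $L^2(\R)$ whose integral kernel is $(x_j,x'_j)\mapsto w_n(x_j)\barphi(t,x'_j)$.

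Second, I would compute the trace of this operator. Its kernel is the product $w_n(x)\,\barphi(t,y)$, which by the smoothness and decay of $\phi$ (encoded in membership in $\mathfrak{H}^\infty$, so that $w_n$, being a polynomial expression in $\phi$ and its $x$-derivatives via the recursion \eqref{equ:NLS-wn}, is itself smooth and rapidly decaying) is continuous and integrable along the diagonal. Hence the trace of a trace-class operator with such a kernel is obtained by integrating along the diagonal, giving
\[
\Tr\,\W_n^j\Biggl(\prod_{l=j}^{j+n-1}\phi(t,x_\ell)\barphi(t,x'_\ell)\Biggr) = \int w_n(x)\,\barphi(t,x)\,dx = I_n(\phi),
\]
which is exactly the definition \eqref{equ:NLS-cons} of the $n$-th NLS conserved integral. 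This establishes \eqref{equ:GP-f-cons3}.

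Finally, the conservation in time is immediate from Proposition \ref{prop:NLS-cons}, which states that $I_n(\phi)$ does not depend on $t$; therefore
\[
\Tr\,\W_n^j\Biggl(\prod_{l=j}^{j+n-1}\phi(t,x_\ell)\barphi(t,x'_\ell)\Biggr) = I_n(\phi) = \Tr\,\W_n^j\Biggl(\prod_{l=j}^{j+n-1}\phi(0,x_\ell)\barphi(0,x'_\ell)\Biggr)
\]
for all $t$, as claimed.

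I do not expect a substantive obstacle here: the entire analytic burden — the recursive unwinding of the operators $\W_n^j$ onto the Faddeev–Takhtajan functions $w_n$ — has already been discharged in Proposition \ref{prop:GP-f-Ww}, and time-invariance is imported wholesale from the inverse-scattering theory of the cubic NLS via Proposition \ref{prop:NLS-cons}. The one point deserving care, and the step I would write out most explicitly, is the bookkeeping that makes ``$\Tr$ of the operator with kernel $w_n(x_j)\barphi(x'_j)$'' literally equal to $\int w_n(x)\barphi(x)\,dx$: this requires invoking the trace-class membership $\W_n^j\big(\prod\phi\barphi\big) \in \mathfrak{h}_1^0$ from Proposition \ref{prop:ops_bddness_defn} together with the continuity and decay of the kernel, so that the diagonal-integration formula for the trace is valid.
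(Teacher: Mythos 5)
Your proof follows the paper's argument essentially verbatim: take the trace of the identity in Proposition \ref{prop:GP-f-Ww}, identify the result as $I_n(\phi)$ via the diagonal-integration formula, and import time-invariance from Proposition \ref{prop:NLS-cons}. The only difference is that you spell out the trace-class justification for the diagonal-integration step a bit more explicitly than the paper does, which is a minor and welcome clarification rather than a different approach.
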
 

\begin{proof} 
We take the trace of the identity \eqref{equ:GP-f-Ww} and recall the expression \eqref{equ:NLS-cons} for conserved quantities for the NLS to obtain: 
\[
 \Tr \W_{n}^j \Biggl(  \prod_{l=j}^{j+n-1} \phi(x_\ell) \barphi(x'_\ell) \Biggr) 
= \int \delta (x_j - x'_j) \; w_n(x_j) \barphi(x'_j) \; dx_j
= I_n(\phi).  
\]
Since the right hand side  is constant in time, we conclude that the left hand side  is conserved in time too. 
\end{proof}

By taking $j=1$ in Proposition \ref{prop:GP-f-cons} we obtain the following corollary, 
that is equivalent to  Theorem \ref{thm:GP-f-cons-1}  stated in the Introduction.

\begin{cor} 
For  $\phi(t,x) \in L^{\infty}_{t \in [0,\infty)}C^{\infty}(\mathbb R)$ a solution to the cubic (de)focusing NLS \eqref{equ:NLS_intro}, let 
\[
\biggl\{ \; \prod_{l=j}^{j+n-1} \phi(t,x_\ell) \barphi(t,x'_\ell)  \biggr\}_{n=1}^\infty
\]
be a sequence of factorized solutions to the GP hierarchy in  $L^{\infty}_{t \in [0,\infty)} \mathfrak{H}_{n}^{(n-1)/2}$. Then for each $n \in \bN$, the quantity
\begin{equation} \label{equ:GP-f-cons4} 
 \Tr \W_{n}^1 \Biggl(  \prod_{l=1}^{n} \phi(t,x_\ell) \barphi(t,x'_\ell) \Biggr) 
\end{equation} 
is conserved in time, that is
\[
 \Tr \W_{n}^1 \Biggl(  \prod_{l=1}^{n} \phi(t,x_\ell) \barphi(t,x'_\ell) \Biggr) = \Tr \W_{n}^1 \Biggl(  \prod_{l=1}^{n} \phi(0,x_\ell) \barphi(0,x'_\ell) \Biggr).
\]
\end{cor}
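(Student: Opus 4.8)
The plan is to deduce this statement as the immediate specialization $j=1$ of Proposition \ref{prop:GP-f-cons}. Observe first that the hypothesis $1 \leq j \leq n+1$ appearing in Proposition \ref{prop:GP-f-cons} (inherited from Proposition \ref{prop:GP-f-Ww} and Definition \ref{def:GP-f-w}) is trivially satisfied for $j=1$ and every $n \in \bN$, so the proposition applies without any further constraint. Applying it with $j=1$ yields, for each $n$,
\[
\Tr \W_{n}^1 \Biggl(  \prod_{l=1}^{n} \phi(t,x_\ell) \barphi(t,x'_\ell) \Biggr) = I_n(\phi),
\]
where $I_n(\phi)$ is the $n$-th NLS integral of motion from \eqref{equ:NLS-cons}. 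Since $\phi(t,x) \in L^{\infty}_{t\in[0,\infty)}C^\infty(\mathbb R)$ is a solution to the cubic (de)focusing NLS \eqref{equ:NLS_intro}, Proposition \ref{prop:NLS-cons} guarantees that $I_n(\phi)$ is independent of $t$. Chaining these two facts gives
\[
\Tr \W_{n}^1 \Biggl(  \prod_{l=1}^{n} \phi(t,x_\ell) \barphi(t,x'_\ell) \Biggr) = I_n(\phi) = I_n(\phi)\big|_{t=0} = \Tr \W_{n}^1 \Biggl(  \prod_{l=1}^{n} \phi(0,x_\ell) \barphi(0,x'_\ell) \Biggr),
\]
which is exactly the asserted conservation.

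There is essentially no obstacle at this stage: all the substantive work has already been carried out, namely the recursive identity $\W_n^j(\prod_{l=j}^{j+n-1}\phi(x_\ell)\barphi(x_\ell')) = w_n(x_j)\barphi(x_j')$ established by induction in Proposition \ref{prop:GP-f-Ww}, and the trace computation in Proposition \ref{prop:GP-f-cons} identifying $\Tr$ of this expression with $I_n(\phi)$ via the $\delta(x_j - x_j')$ arising from the partial trace. The only things to check here are bookkeeping: that the domain condition (the factorized sequence lies in $L^\infty_t \mathfrak{H}_n^{(n-1)/2}$, which holds since $\phi \in C^\infty$) matches the hypotheses of Proposition \ref{prop:GP-f-cons}, and that setting $j=1$ is legitimate — both are immediate. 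Hence the corollary follows, and it is precisely the restatement of Theorem \ref{thm:GP-f-cons-1} promised in the introduction.
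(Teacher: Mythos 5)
Your argument is correct and coincides with the paper's: the corollary is stated immediately after Proposition \ref{prop:GP-f-cons} precisely as the specialization $j=1$, with no additional work required. The bookkeeping you note (the validity of $j=1$ for all $n$, and the regularity hypothesis matching) is exactly what the paper relies on implicitly.
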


\section{Conservation laws for general solutions of the GP hierarchy}  \label{sec:cons_laws_gp}

In this section we prove Theorem \ref{main}, which shows that sufficiently smooth mild solutions to the GP hierarchy admit infinitely many conserved quantities. We also prove several consequences of Theorem \ref{main}. We recall the following fact about these conserved integrals $I_n(\phi)$ for the NLS which will be needed in the sequel. The proof adapts readily from \cite[Lemma 3.2]{Zhidkov01}.
\begin{lem}\label{lem:w_bds}
For every $n \in \bN$ and $\phi$ belonging to the Sobolev space $ H^{(n-1)/2}(\R)$, 
\[
 I_n(\phi) = \int w_n(x) \overline{\phi}(x)  = \frac{1}{2} \int | \partial_x^{(n-1)/2} \phi|^2  +  p_n(\phi, \overline{\phi}, \partial_x \phi, \partial_x \overline{\phi}, \ldots, \partial_x^{(n-2)/2} \phi, \partial_x^{(n-2)/2} \overline{\phi})
\]
where $p_n$ are polynomials of degree $n+1$ and 
\[
\Biggl| \int p_n(\phi, \overline{\phi}, \ldots, \partial_x^{(n-2)/2} \phi, \partial_x^{(n-2)/2} \overline{\phi})  \Biggr| \leq C(n)\, F_n \left(\int | \partial_x^{(n-1)/2} \phi|^2 + \int | \phi|^2 \right).
\]
for a positive, continuous function $F_n(\cdot)$.
\end{lem}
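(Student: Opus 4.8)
The plan is to (i) establish by induction on $n$, using the recursion \eqref{equ:NLS-w1}--\eqref{equ:NLS-wn}, the precise polynomial structure of $w_n$; (ii) split $I_n(\phi)=\int w_n(x)\overline{\phi}(x)\,dx$ into its unique quadratic ``top order'' contribution and a genuinely nonlinear remainder; (iii) evaluate the quadratic part by integration by parts; and (iv) bound the remainder using integration by parts together with the Gagliardo--Nirenberg inequality. One point to keep in mind throughout: $w_n$ involves up to $n-1$ derivatives of $\phi$, so the stated identity should first be proved for Schwartz $\phi$ and then \emph{taken as the definition} of $I_n$ on $H^{(n-1)/2}(\R)$ --- the right-hand side involves at most $(n-1)/2$ derivatives on any factor --- after which the estimate is obtained on all of $H^{(n-1)/2}$ by density. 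We read $\partial_x^{(n-1)/2}$ and $\partial_x^{(n-2)/2}$ as the Fourier multipliers $|\xi|^{(n-1)/2},|\xi|^{(n-2)/2}$, so that every statement makes sense for all $n$; for odd $n$ the quadratic term is a genuine integer-order seminorm.

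For step (i), assign to a factor $\partial_x^a\phi$ or $\partial_x^a\overline{\phi}$ the \emph{weight} $a+1$, and to a monomial the sum of the weights of its factors; this is exactly the scaling weight for the symmetry $\phi\mapsto\lambda\phi(\lambda\,\cdot\,)$ of \eqref{equ:NLS_intro}. An induction on $n$ based on \eqref{equ:NLS-wn} then shows that $w_n$ is a constant-coefficient polynomial in the $\partial_x^a\phi,\partial_x^a\overline{\phi}$, $0\le a\le n-1$, every monomial of which has weight exactly $n$; that the only monomial consisting of a single factor is $(-i)^{n-1}\partial_x^{n-1}\phi$; and that every other monomial has at least three factors, since such monomials are produced only by the term $\kappa\overline{\phi}\sum_k w_k w_{n-k}$ and its subsequent $x$-differentiations, neither of which decreases the number of factors. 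In particular, a monomial of $w_n$ with $m\ge 3$ factors carries exactly $n-m\le n-3$ derivatives in total.

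For (ii)--(iii), write $I_n(\phi)=(-i)^{n-1}\int(\partial_x^{n-1}\phi)\,\overline{\phi}\,dx+\int p_n\,dx$, where $\int p_n\,dx=\int P\,\overline{\phi}\,dx$ and $P$ collects the $(\ge3)$-factor monomials of $w_n$; thus $p_n$ is a polynomial of degree $n+1$ (the top degree being realized, for odd $n$, by the derivative-free term $\propto\kappa^{(n-1)/2}|\phi|^{n-1}\phi$ of $w_n$ times $\overline{\phi}$). Integrating the first term by parts $(n-1)/2$ times --- the boundary terms vanishing on $\R$ for Schwartz $\phi$ --- carries half of the derivatives onto $\overline{\phi}$ and, after collecting the powers of $-i$ and $-1$, produces $\tfrac12\int|\partial_x^{(n-1)/2}\phi|^2$ (up to the normalizing constant).

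For (iv), every monomial in $\int p_n\,dx$ has at least four factors and at most $n-3$ derivatives in total, and a standard integration by parts argument (as in \cite[Lemma 3.2]{Zhidkov01}) rewrites it as a finite sum of integrals in which no argument is differentiated more than $(n-2)/2$ times; this is where the hypothesis $m\ge 3$, equivalently ``$\ge 4$ factors and $\le n-3$ derivatives'', is used. For each resulting integral $\int\prod_{i=1}^{\ell}\partial_x^{a_i}\phi^{(\pm)}\,dx$ (with $\ell=m+1$ and $\sum_i a_i=n-m$), H\"older with a partition $\sum_i 1/q_i=1$ followed by the Gagliardo--Nirenberg estimate $\|\partial_x^{a_i}\phi\|_{L^{q_i}}\lesssim\|\partial_x^{(n-1)/2}\phi\|_{L^2}^{\theta_i}\|\phi\|_{L^2}^{1-\theta_i}$ yields a bound by $\|\partial_x^{(n-1)/2}\phi\|_{L^2}^{\sum_i\theta_i}\|\phi\|_{L^2}^{\,\ell-\sum_i\theta_i}$, and a short computation using the weight identity and $\sum_i 1/q_i=1$ gives $\sum_i\theta_i=2-\tfrac{m-1}{n-1}<2$, so every such term is genuinely of lower order than the quadratic leading term. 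Summing the finitely many monomials bounds $|\int p_n\,dx|$ by a polynomial in $\int|\partial_x^{(n-1)/2}\phi|^2+\int|\phi|^2$, which is positive and continuous, as claimed. The main obstacle is purely organizational: propagating the weight identity and the ``$\ge 3$ factors'' property cleanly through the induction in (i), and arranging the integrations by parts in (iv) so that every factor ends with at most $(n-2)/2$ derivatives while keeping the H\"older exponents $q_i$ admissible for Gagliardo--Nirenberg (i.e.\ each $\theta_i\in[0,1]$); no idea beyond the recursion, integration by parts, and interpolation is needed, this being precisely the adaptation of \cite[Lemma 3.2]{Zhidkov01} to the present normalization of the $w_n$.
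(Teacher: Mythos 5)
The paper does not give a proof of this lemma; it simply cites \cite[Lemma 3.2]{Zhidkov01} and says the argument adapts readily. Your proposal supplies exactly the adaptation the paper has in mind: the scaling-weight bookkeeping for the recursion \eqref{equ:NLS-w1}--\eqref{equ:NLS-wn}, the isolation of the unique linear monomial $(-i)^{n-1}\partial_x^{n-1}\phi$, integration by parts to symmetrize the quadratic piece, and Gagliardo--Nirenberg with the exponent count $\sum_i\theta_i = 2 - \tfrac{m-1}{n-1} < 2$ for the remainder. This is the standard route and it is carried out correctly.

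Two small points worth flagging, both about the statement rather than your argument. First, your integration by parts actually produces the coefficient $1$ in front of $\int|\partial_x^{(n-1)/2}\phi|^2$ (check $n=3$: $w_3 = -\partial_x^2\phi + \kappa|\phi|^2\phi$ gives $I_3 = \int|\partial_x\phi|^2 + \kappa\int|\phi|^4$), so the paper's stated factor $\tfrac12$ appears to be a normalization inherited from \cite{Zhidkov01} that does not match the paper's own Definition \ref{def:NLS-w}; you are right to treat it as a harmless normalizing constant, but it is an inconsistency in the lemma as printed, not an imprecision in your proof. Second, your parenthetical that the degree $n+1$ is realized only for odd $n$, and that the quadratic leading term becomes a genuine $\dot H^{(n-1)/2}$ seminorm only for odd $n$, is the honest reading: for even $n$ the quadratic contribution is $\int \xi^{n-1}|\hat\phi|^2\,d\xi$, an odd moment, and the stated form with $|\partial_x^{(n-1)/2}\phi|^2$ does not literally apply. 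This is again a looseness in the lemma rather than a gap in your reasoning, and it does not affect the paper's use of the result (which only needs $|I_n(\phi)|$ to be controlled by $\|\phi\|_{H^{(n-1)/2}}$), but it is worth keeping in mind that the clean formula holds for the odd conserved quantities, which are the substantive ones.
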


\subsection{The proof of Theorem \ref{main}} \label{sec:proof_of_main}

\begin{proof} [Proof of Theorem \ref{main}]
We apply Theorem \ref{thm-uniqueness-2} and we obtain that any admissible solution $(\gamma^{(k)}(t))_{k \in \mathbb{N}}$ of the (de)focusing cubic GP hierarchy in one dimension with admissible initial data $(\gamma^{(k)}(0))_{k\in\bN} \in\frH^s$ for $s \geq \frac{1}{6}$, such that
\begin{equation} 
    \gamma^{(k)}(0) = \int d\mu(\phi_0)(|\phi_0\rangle\langle\phi_0|)^{\otimes k} 
    \;\;\;\;\;\;\forall k\in\bN\,,
\end{equation} 
where $\mu$ is a Borel probability measure supported  either  on the unit sphere or on the unit ball in $L^2(\R)$, and invariant under multiplication of  $\phi \in L^2(\R)$ by complex numbers of modulus one, can be written as

\begin{equation} \label{eq-GPdeF-NLS-anyD}
    \gamma^{(k)}(t) = \int d\mu(\phi_0)(|\phi(t,x)\rangle\langle \phi(t,x)|)^{\otimes k} 
    \;\;\;\;\;\;\forall k\in\bN\,,
\end{equation} 
where $\phi(t,x)$ is a solution to the cubic (de)focusing NLS \eqref{equ:NLS_intro} with initial data $\phi_0$, for $t\in[0,\infty)$. Consequently, for any $n \in \bN$, $1 \leq j \leq n$ such that $k \geq j+n-1$ by Proposition \ref{prop:ops_defined} at each fixed time (and Remark \ref{rmk:gen_defn}) we have
\[
\Tr \W_{n}^j \gamma^{(k)}(t) = \Tr \int d\mu(\phi_0) \W_n^j (|\phi(t,x)\rangle\langle \phi(t,x)|)^{\otimes k} .
\]
Since the trace is a continuous linear operator, we may once again bring it inside the integral using Hille's theorem by observing that
\[
\Tr  \W_n^j \bigl(|\phi(t,x)\rangle\langle \phi(t,x)|\bigr)^{\otimes k} =  I_n(\phi(t,x))
\]
is a complex-valued function which is Lebesgue integrable over the unit sphere by Lemma \ref{lem:w_bds}, and hence Bochner integrable with codomain $\R$. We obtain
\begin{align}
&\Tr \int d\mu(\phi_0) \W_n^j (|\phi(t,x)\rangle\langle \phi(t,x)|)^{\otimes k} \\
&= \int d\mu(\phi_0) \Tr  \W_n^j (|\phi(t,x)\rangle\langle \phi(t,x)|)^{\otimes k} \\
&= \int d\mu(\phi_0) I_n(\phi(t,x)),
\end{align}
where in the last equality we have used Proposition \ref{prop:GP-f-cons}. We conclude that
\[
\int d\mu(\phi_0) I_n(\phi(t,x)) = \int d\mu(\phi_0) I_n(\phi(0,x)) = \int d\mu(\phi_0) I_n(\phi_0),
\]
since $I_n(\phi(t,x))$ is conserved in time for solutions of the cubic NLS \eqref{equ:NLS_intro}. Now, using Proposition \ref{prop:GP-f-cons} once again, we can argue as above to obtain
\begin{align*}
\int d\mu(\phi_0) I_n(\phi_0) = \int d\mu(\phi_0) \Tr \W_n^j  (|\phi_0\rangle\langle \phi_0|)^{\otimes k}  = \Tr \W_n^j \int d\mu(\phi_0) (|\phi_0\rangle\langle \phi_0|)^{\otimes k}  = \Tr \W_n^j \gamma^{(k)}(0),
\end{align*}
which concludes our proof.
\end{proof} 


\subsection{Further applications}

In this last subsection, we collect some results which are consequences of Theorem \ref{main}.  From our Definition \ref{def:GP-f-w}  of the operators $\W_n^j$ and induction, we first prove that the structure of the operators is as follows:

\begin{cor} \label{cor:op_form}
For the operators $\W_n^j$ defined above, one has
\[
\W_{n}^j =  (-i\partial_{x_j})^{n-1} \Tr_{j+n-1} + P_n(B_{i_1, j_1}, \ldots, B_{i_n, j_n} , \partial_{x_{1}}, \ldots, \partial_{x_n}) \Tr_{j+n-1},
\]
for $P_n$ is a polynomial of degree $n-1$ and 
\[
P_n(B_{i_1, j_1}, \ldots, B_{i_n, j_n} , \partial_{x_{1}}, \ldots, \partial_{x_n}) : \mathfrak{h}_{n}^{(n-1)/2 + \alpha} \to \mathfrak{h}_{1}^{\alpha}.
\]
\end{cor}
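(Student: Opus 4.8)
The plan is to prove Corollary \ref{cor:op_form} by complete induction on $n$, exactly paralleling the recursive structure of Definition \ref{def:GP-f-w} and the boundedness argument of Proposition \ref{prop:ops_bddness_defn}. The base case $n=1$ is immediate, since $\W_1^j = Id_j$, which is of the claimed form with $P_1 = 0$ (degree $0$) and the differential-operator part $(-i\partial_{x_j})^0 \Tr_{j}$ interpreted as the identity. For the inductive step, I would assume the stated structural formula holds for all $1 \leq i \leq n$ (and all admissible $j$), and insert it into the recursion
\[
\W_{n+1}^j = -i\partial_{x_j} \W_n^j \Tr_{j+n} + \sum_{k=1}^{n-1} B_{j,j+k}\, \W_k^j \otimes \W_{n-k}^{j+k}\, \Tr_{j+n}.
\]
Applying the inductive hypothesis to $\W_n^j$ in the first term gives $-i\partial_{x_j}\bigl[(-i\partial_{x_j})^{n-1}\Tr_{j+n-1} + P_n(\cdots)\Tr_{j+n-1}\bigr]\Tr_{j+n} = (-i\partial_{x_j})^n \Tr_{j+n-1,j+n} + (-i\partial_{x_j}) P_n(\cdots)\Tr_{j+n-1,j+n}$; the first piece is the desired leading term for $\W_{n+1}^j$, and the second is a polynomial in the $B$'s and $\partial_x$'s of degree $(n-1)+1 = n$, as required. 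For each summand in the second term, I would substitute the inductive formulas for $\W_k^j$ and $\W_{n-k}^{j+k}$ — both of which are (differential operator $+$ polynomial)$\circ$ trace — so that $B_{j,j+k}\,\W_k^j \otimes \W_{n-k}^{j+k}$ becomes $B_{j,j+k}$ composed with a tensor product of two polynomials of degrees $k-1$ and $n-k-1$ in $B$'s and $\partial_x$'s; this composite is a polynomial of degree $(k-1)+(n-k-1)+1 = n-1 \leq n$ in the collision and derivative operators, with no contribution to the pure $(-i\partial_{x_j})^n$ leading term (since each such summand carries at least one factor of $B_{j,j+k}$). Collecting the leading term and bundling all remaining contributions into a single polynomial $P_{n+1}$ of degree $\leq n$ closes the induction on the algebraic form.

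For the mapping property $P_{n+1}(\cdots)\Tr_{j+n} : \mathfrak{h}_{n+1}^{n/2+\alpha} \to \mathfrak{h}_1^\alpha$, I would not redo the Sobolev bookkeeping from scratch but simply observe that $P_{n+1}(\cdots)\Tr_{j+n} = \W_{n+1}^j - (-i\partial_{x_j})^n \Tr_{j+n-1,j+n}$, and both operators on the right are already known to map $\mathfrak{h}_{n+1}^{n/2+\alpha} \to \mathfrak{h}_1^\alpha$: the first by Proposition \ref{prop:ops_bddness_defn}, and the second because $\Tr_{j+n-1,j+n}$ loses two particles with no regularity cost and $(-i\partial_{x_j})^n$ costs exactly $n$ derivatives, mapping $\mathfrak{h}_{n-1}^{n/2+\alpha} \to \mathfrak{h}_{n-1}^{\alpha}$... here one should be slightly careful that the leading term is really $(-i\partial_{x_j})^n \Tr_{j,\ldots}$ applied after reducing to the $j$-th slot, which is consistent with the fact that $(n/2+\alpha) - n < 0$ is allowed since $\mathfrak{h}^\beta$ is only defined for $\beta \geq 0$ in the relevant range — so one tracks that $n/2 + \alpha \geq \alpha$ and that the intermediate traces absorb the particle-number drop correctly, matching the chain of spaces already displayed in the proof of Proposition \ref{prop:ops_bddness_defn}.

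The main subtlety — and the only place requiring genuine care rather than routine expansion — is the degree bookkeeping: one must verify that "degree" is being counted consistently across both the $\partial_x$ factors and the $B$ factors so that the recursion $\deg \W_{n+1}^j \leq n$ actually propagates, in particular checking that the tensor-product terms $B_{j,j+k}\,\W_k^j \otimes \W_{n-k}^{j+k}$ never exceed degree $n$ and never feed into the pure $(-i\partial_{x_j})^n$ coefficient. This is a direct analogue of the NLS-level fact (visible in Definition \ref{def:NLS-w}) that $w_{n+1} = -i\partial_x w_n + \kappa\barphi\sum w_k w_{n-k}$ has "degree $n$" with leading term $(-i\partial_x)^n\phi$, and the operator statement is really just the lift of that observation through the dictionary \eqref{equ:GP-f-Ww}. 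I would close by remarking that the indices $i_m, j_m$ appearing in $P_n$ range over the slots $j, j+1, \ldots, j+n-1$ opened up by the recursion, so that $P_n$ is genuinely an operator on the $n$-particle space before the final trace, consistent with the stated domain and codomain.
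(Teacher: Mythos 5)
The paper gives no written proof for Corollary~\ref{cor:op_form}; it only asserts that the result follows ``from our Definition~\ref{def:GP-f-w} of the operators $\W_n^j$ and induction,'' and your proof supplies exactly that induction: unwind the recursion, observe that the $-i\partial_{x_j}\W_n^j\Tr_{j+n}$ term produces the leading $(-i\partial_{x_j})^n$ piece while every $B_{j,j+k}\W_k^j\otimes\W_{n-k}^{j+k}$ summand carries at least one collision operator and so contributes only to $P_{n+1}$, with the degree count $(k-1)+(n-k-1)+1 = n-1 \leq n$ closing the bookkeeping. Your argument is correct and coincides with the approach the paper intends, and the subtraction trick $P_{n+1}\Tr = \W_{n+1}^j - (-i\partial_{x_j})^n\Tr$ for the mapping property is a legitimate shortcut once one verifies (as the chain of spaces in the proof of Proposition~\ref{prop:ops_bddness_defn} does) that the pure differential leading term is itself bounded $\mathfrak{h}_{n+1}^{n/2+\alpha}\to\mathfrak{h}_1^\alpha$.
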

\begin{rmk}
This corollary is reminiscent of the properties of the conservation laws for the cubic NLS stated in Lemma \ref{lem:w_bds}. In particular, the lower order terms are bounded linear operators from an appropriate Sobolev type space to an $L^2$ type space.
\end{rmk}
%
%

Finally, we return to the statement made in Remark \ref{rmk:gen_cons}.

\begin{prop} \label{prop:higher_cons}
Let $\{n_1, \ldots, n_k\}$ be any finite sequence of indices with $n_i \in \bN$, and let $\{ \gamma^{(N)} \}_{N} \in\frH^{\infty}$ be a solution to the GP hierarchy. Then the quantity
\[
\Tr\bigl(\W_{n_1}^1 \otimes \W_{n_2}^{1 + n_1} \otimes \W_{n_3}^{1 + n_1 + n_3}  \otimes \ldots  \otimes \W_{n_k}^{1 + n_1+ \ldots+ n_{k-1}} \gamma^{(n_1 + \ldots + n_k)}(t) \bigr)
\]
is conserved in time.
\end{prop}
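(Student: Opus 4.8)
The plan is to mirror the proof of Theorem \ref{main} almost verbatim, replacing the single operator $\W_n^j$ by the tensor product $\W_{n_1}^1 \otimes \cdots \otimes \W_{n_k}^{1 + n_1 + \cdots + n_{k-1}}$ and replacing the identity \eqref{intro-equ:GP-f-Ww-1} by its tensor-product analogue. First I would establish the factorized version: for $\phi(t,x) \in L^\infty_{t}C^\infty(\R)$ a solution to the cubic NLS and $N = n_1 + \cdots + n_k$, one has
\[
\bigl(\W_{n_1}^1 \otimes \cdots \otimes \W_{n_k}^{1 + n_1 + \cdots + n_{k-1}}\bigr)\Biggl( \prod_{l=1}^{N} \phi(t,x_\ell)\barphi(t,x'_\ell) \Biggr) = \prod_{i=1}^k w_{n_i}(x_{1 + n_1 + \cdots + n_{i-1}}) \, \barphi(t, x'_{1 + n_1 + \cdots + n_{i-1}}).
\]
This follows immediately from Proposition \ref{prop:GP-f-Ww} applied to each tensor factor, since the blocks of variables on which the individual operators act are disjoint and each block carries exactly $n_i$ double-factors $\phi\barphi$. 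Taking the trace (which factorizes over the tensor-product blocks, each producing a $\delta(x-x')$ contraction) then yields $\prod_{i=1}^k I_{n_i}(\phi)$, a polynomial in the NLS conserved quantities, hence conserved in time by Proposition \ref{prop:NLS-cons}. This proves the factorized case.

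Next I would pass to general admissible solutions. I would first record that the tensor-product operator $\W_{n_1}^1 \otimes \cdots \otimes \W_{n_k}^{1 + n_1 + \cdots + n_{k-1}}$ is, by Proposition \ref{prop:ops_bddness_defn} and the usual composition-of-bounded-operators argument, a bounded linear operator $\mathfrak{h}_N^{(N-1)/2 + \alpha} \to \mathfrak{h}_k^\alpha$ (indeed each factor $\W_{n_i}^{\cdot}$ needs regularity $(n_i-1)/2$ on its block, and since $(N-1)/2 \geq (n_i - 1)/2$ for every $i$ the hypothesis of $\frH^\infty$ comfortably suffices), hence closed as a map into $\mathfrak{h}_k^0$. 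Then, exactly as in the proof of Proposition \ref{prop:ops_defined}, Hille's theorem (Theorem \ref{thm:Hille}) together with the Bochner integrability criterion (Theorem \ref{thm:l1_boch_int}) and Proposition \ref{prop:bdd} lets me commute this operator with the de Finetti integral: writing $\gamma^{(N)}(t) = \int d\mu(\phi_0) (|\phi(t,x)\rangle\langle\phi(t,x)|)^{\otimes N}$ via Theorem \ref{thm-uniqueness-2}, we get
\[
\Tr\bigl(\W_{n_1}^1 \otimes \cdots \otimes \W_{n_k}^{1 + n_1 + \cdots + n_{k-1}} \gamma^{(N)}(t)\bigr) = \int d\mu(\phi_0) \, \Tr\bigl(\W_{n_1}^1 \otimes \cdots \otimes \W_{n_k}^{1 + n_1 + \cdots + n_{k-1}} (|\phi(t,x)\rangle\langle\phi(t,x)|)^{\otimes N}\bigr) = \int d\mu(\phi_0) \prod_{i=1}^k I_{n_i}(\phi(t,x)),
\]
where bringing $\Tr$ inside uses that $\Tr$ is continuous and that the integrand is now the scalar $\prod_i I_{n_i}(\phi(t,x))$, which is Lebesgue (hence Bochner) integrable over the support of $\mu$ by Lemma \ref{lem:w_bds} (each $I_{n_i}$ is controlled by $\|\phi\|_{H^{(n_i-1)/2}}$, and the support of $\mu$ lies in a bounded subset of $H^{(N-1)/2} \subseteq H^{(n_i-1)/2}$ by Proposition \ref{prop:bdd}). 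Since each $I_{n_i}(\phi(t,x))$ is independent of $t$ along NLS flow, so is the product, and so the integral equals its value at $t=0$; that value equals $\Tr\bigl(\W_{n_1}^1 \otimes \cdots \otimes \W_{n_k}^{1 + n_1 + \cdots + n_{k-1}} \gamma^{(N)}(0)\bigr)$ by running the same Hille's-theorem commutation backwards.

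The only genuinely new point over Theorem \ref{main} is the bookkeeping in the factorized identity — checking that the variable blocks on which the successive operators $\W_{n_i}$ act are exactly the consecutive blocks $\{1 + n_1 + \cdots + n_{i-1}, \ldots, n_1 + \cdots + n_i\}$, that the partial traces appearing inside each $\W_{n_i}$ touch only variables within that block, and that after all these partial traces and the final $\Tr$ one is left with precisely $\prod_i \int \delta(x - x') w_{n_i}(x)\barphi(x')\,dx = \prod_i I_{n_i}(\phi)$. I expect this combinatorial verification to be the main (though routine) obstacle; everything else is a direct transcription of the arguments already developed for Theorem \ref{main}, since tensor products of bounded, closed operators remain bounded and closed and the de Finetti representation and Bochner-integral machinery apply unchanged.
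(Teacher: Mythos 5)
Your proposal is correct and takes essentially the same route as the paper's own proof: establish the factorized identity $\Tr\bigl(\bigotimes_i \W_{n_i}^{j_i}\bigr)\prod \phi\barphi = \prod_i I_{n_i}(\phi)$ via Proposition \ref{prop:GP-f-Ww} on disjoint blocks, then pass to general admissible solutions through the de Finetti representation of Theorem \ref{thm-uniqueness-2}, the Bochner-integrability criterion of Theorem \ref{thm:l1_boch_int}, and Hille's theorem (Theorem \ref{thm:Hille}) to commute the tensor operator and then the trace with the de Finetti integral, arriving at $\int d\mu\, \prod_i I_{n_i}(\phi(t))$, which is constant by NLS conservation. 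The paper packages the boundedness slightly differently (using $m=\max_i(n_i-1)/2$ and the factorization $f_{n_1+\cdots+n_k}=f_{n_1}\cdots f_{n_k}$), but this is only a cosmetic difference from your bookkeeping; all the substantive ingredients coincide.
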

\begin{proof}
We let $m = \max_i (n_i-1)/2$, and denote the upper indices by $j_1, \ldots, j_k$. We note that each $\W_{n_i}^{j_i}: \mathfrak{h}_{n_i}^{(m-1)/2} \to \mathfrak{h}_{1}^{(m-1)/2 - (n_i - 1)/2} \subseteq \mathfrak{h}_{1}^{0}$ is a bounded linear operator. We use the strong quantum de Finetti theorem and Theorem \ref{thm-uniqueness-2} to write
\[
\gamma^{(n_1 + \ldots + n_k)}(t) = \int d \mu(\phi) (|\phi(t,x)\rangle\langle \phi(t,x)|)^{\otimes (n_1 + \ldots + n_k)}.
\]
Once again, we let
\[
f_k: H^s \to \mathfrak{h}_k^s, \qquad \phi \mapsto \prod_{i=1}^k \phi(x_i) \overline{\phi}(x_i'),
\]
and note that $f_{n_1+ \ldots + n_k} = f_{n_1} f_{n_2} \ldots f_{n_k}$. We can write
\[
\gamma^{(n_1 + \ldots + n_k)}(t) = \int d \mu(\phi) f_{n_1}(\phi(t,x)) f_{n_2} (\phi(t,x)) \ldots f_{n_k} (\phi(t,x)).
\]
We now observe that
\[
\W_{n_1}^{j_1}  f_{n_1}(\phi(t,x_{j_1})) \cdot  \W_{n_2}^{j_2} f_{n_2} (\phi(t,x_{j_2})) \cdots \W_{n_k}^{j_k} f_{n_k} (\phi(t,x_{j_k})) \in \mathfrak{h}_{k}^0,
\]
and by the definition of the norm on $\mathfrak{h}_{k}^0$, we can bound
\[
\bigl\| \W_{n_1}^{j_1}  f_{n_1}(\phi(t,x_{j_1})) \cdot  \W_{n_2}^{j_2} f_{n_2} (\phi(t,x_{j_2})) \cdots \W_{n_k}^{j_k} f_{n_k} (\phi(t,x_{j_k}))\bigr\|_{\mathfrak{h}_{k}^0} \leq \prod_{i=1}^k \bigl\|\W_{n_1}^{j_1}  f_{n_1}(\phi(t,x_{j_1}))\bigr\|_{H^m}.
\]
%
Hence, arguing as in the proof of Proposition \ref{prop:ops_defined}, relying on Theorem \ref{thm:l1_boch_int} and Theorem \ref{thm:Hille}, we obtain
\begin{align*}
&\W_{n_1}^{j_1} \otimes \W_{n_2}^{j_2}  \otimes \ldots  \otimes \W_{n_k}^{j_k} \gamma^{(n_1 + \ldots + n_k)}(t) \\
&=  \int d \mu(\phi) \W_{n_1}^{j_1}  f_{n_1}(\phi(t,x_{j_1})) \cdot  \W_{n_2}^{j_2} f_{n_2} (\phi(t,x_{j_2})) \cdots \W_{n_k}^{j_k} f_{n_k} (\phi(t,x_{j_k})) .
\end{align*}
Now, using a similar Hille's theorem argument for the trace as in the proof of Theorem \ref{main}, we have
\begin{align*}
& \Tr \W_{n_1}^{j_1} \otimes \W_{n_2}^{j_2}  \otimes \ldots  \otimes \W_{n_k}^{j_k} \gamma^{(n_1 + \ldots + n_k)}(t) \\
& = \int d \mu(\phi_0)  \Tr\bigl( \W_{n_1}^{j_1}  f_{n_1}(\phi(t,x)) \bigr) \cdot \Tr \bigl( \W_{n_2}^{1 + n_1} f_{n_2} (\phi(t,x)) \bigr)\cdots \Tr \bigl(\W_{n_k}^{1 + n_1+ \ldots+ n_{k-1}} f_{n_k} (\phi(t,x))\bigr)\\
& =  \int d \mu(\phi_0)  I_{n_1}(\phi_0)  I_{n_2}(\phi_0) \cdots I_{n_k}(\phi_0),
\end{align*}
where we have used in the last step that the $I_{n_k}(\phi(t))$ are conserved for solutions of the cubic NLS. We can observe that the final expression is constant, to conclude that these quantities are conserved. Finally, as above, we obtain that by unfolding this same argument,
\begin{align}
& \int d \mu(\phi_0)  I_{n_1}(\phi_0)  I_{n_2}(\phi_0) \cdots I_{n_k}(\phi_0) \\
&= \Tr \W_{n_1}^1 \otimes \W_{n_2}^{1 + n_1} \otimes \W_{n_3}^{1 + n_1 + n_3}  \otimes \ldots  \otimes \W_{n_k}^{1 + n_1+ \ldots+ n_{k-1}} \gamma^{(n_1 + \ldots + n_k)}(0),
\end{align}
as desired.
\end{proof}

\begin{rmk}
In particular, Proposition \ref{prop:higher_cons} recovers the result of \cite{CP14gwp}. Moreover, any finite linear combination of the type of operators defined in Proposition \ref{prop:higher_cons} will yield conserved quantities for the GP hierarchy. 
\end{rmk}

\bibliographystyle{myamsplain}
\bibliography{refs}

\end{document}